\newtheorem{lemma}{\bf Lemma}[section]
\newtheorem{theorem}{\bf Theorem}[section]
\newtheorem{proposition}{\bf Proposition}[section]
\newtheorem{corollary}{\bf Corollary}[section]
\newtheorem{definition}{\bf Definition}[section]
\newtheorem{remark}{\bf Remark}[section]
\numberwithin{equation}{section}
\definecolor{mydeepgreen}{RGB}{3,100,50}
\begin{document}
\title{{\sl Relation between irrationality and regularity for $ C^1 $ conjugacy of $ C^2 $ circle diffeomorphisms to rigid rotations}}
\author{Zhicheng Tong $^{a,1}$, Shuyuan Xiao $^{b,2}$, Yong Li $^{*,c,1,3}$ }

\renewcommand{\thefootnote}{}
\footnotetext{\hspace*{-6mm}

\begin{tabular}{l l}
 $^{*}$~~~The corresponding author.\\
 $^{a}$~~~E-mail address : tongzc25@jlu.edu.cn\\
 $^{b}$~~~E-mail address : xiaosy@dlnu.edu.cn\\
 $^{c}$~~~E-mail address : liyong@jlu.edu.cn\\
 $^{1}$~~~School of Mathematics, Jilin University, Changchun 130012,  China.\\
 $^{2}$~~~School of Mathematical Sciences, Dalian Minzu University, Dalian 116600,  China.\\
$^{3}$~~~Center for Mathematics and Interdisciplinary Sciences, Northeast Normal University, Changchun 130024,  China.
\end{tabular}}

\date{}
\maketitle

\begin{abstract}
By introducing the modulus of continuity, we first establish the corresponding cross-ratio distortion estimates under  $ C^2 $ smoothness, and further derive a Denjoy-type inequality, which is almost optimal for dealing with circle diffeomorphisms. The latter plays a prominent role in the study of $ C^1 $ conjugacy to irrational rotations. We also establish an explicit integrability correlation between continuity and irrationality for the first time. Furthermore, the regularity of the conjugation is addressed and proved to be sharp.\\
\\
{\bf Keywords:} {$ C^1 $ conjugacy, irrational rotation,  modulus of continuity, optimal integrability condition, sharp regularity}\\
{\bf2020 Mathematics Subject Classification:} {37C05, 37E10, 37A05, 37C15}
\end{abstract}

\tableofcontents

\section{Introduction}
Conjugacy on the circle, that is, determining under what conditions a diffeomorphism on $ \mathbb{T}^1=\mathbb{R} /  \mathbb{Z}=\left[ {0,1} \right) $ can be conjugated to a rotation, is one of the most fundamental yet difficult topics in dynamical systems. It has a long history of research. It has been known since Poincar\'{e} that the rotation number $ \rho(T) \in \mathbb{T}^1 $ is always well defined (up to an integer summand) for an orientation-preserving homeomorphism $ T $ on $ \mathbb{T}^1 $. More precisely,
\[\rho \left( T \right) = \mathop {\lim }\limits_{n \to  + \infty } \frac{{L_T^n{x_0}}}{n},\]
where $ {L_T}\left( x \right) $ is  the lifting of $ T $ from $ \mathbb{T}^1 $ onto $ \mathbb{R} $ such that $ {L_T}\left( {x + 1} \right) = {L_T}\left( x \right) + 1 $. It is worth mentioning that $ \rho(T) $ does not depend on the choice of the initial point $ x_0 \in \mathbb{T}^1 $, and it is irrational if and only if the mapping $ T $ has no periodic points. We will focus on this case throughout the  paper.  In the study of circle conjugacy, the smooth (even analytic) conjugacy is more important than the  topological type. It should be pointed out that not all irrational rotations can be smoothly conjugated, see the counterexamples constructed by Arnol'd \cite{Arnold} and Herman \cite{Herman79}. The latter showed that for a Liouville rotation number $ \alpha_1 $ (i.e. is not Diophantine and Bruno), there exists an analytical diffeomorphism $ \widetilde{T} $ of the circle for which the topological conjugacy to the rotation $ R_{\alpha_1}(\theta)=\theta+\alpha_1 \mod 1$ is not even absolutely continuous (certainly does not admit $ C^1 $ conjugacy), see also the work in \cite{MR2914151,Yoccoz84}.  Denjoy \cite{A.Denjoy} proved that if $ T $ is of class $ C^2 $ then it could topologically conjugate to the rotation $ R_{\alpha_2}(\theta)=\theta+\alpha_2 \mod 1$, i.e. there exists a homeomorphism $ \phi $ on $ \mathbb{T}^1 $ such that $ \phi \circ T= R_{\alpha_2} \circ \phi$.  Based on these crucial facts, a natural question to ask is:

\textit{Under what conditions is the conjugation smooth, or merely differentiable, or equivalently what is the relation between the regularity of $ T $ and the irrationality of the rotation number $ \rho(T) $ concerning at least $ C^1 $ conjugacy?}

Irrationality can be described in many ways.  To consider conjugacy on the circle, Herman \cite{Herman79} established a condition of partial quotients whose corresponding irrational numbers form a set of full Lebesgue measure. Incidentally, it is a breakthrough in global conjugacy, which is different  from the KAM approach relying on the localness (i.e., one has to require that $ T $ must be close to $ R_\rho $).  Yoccoz \cite{Yoccoz84} further adopted the nonresonant condition of Diophantine's type for the irrational rotation $ \rho \in \mathbb{T}^1 $ with exponent $ \beta>0 $, i.e.,
\begin{equation}\label{diobeta}
	\left| {\rho  - \frac{p}{q}} \right| > \frac{{C\left( \rho  \right)}}{{{q^{2 + \beta }}}}
\end{equation}
for any rational number $ p/q $, where $ C(\rho) $ is a positive number that  depends on $ \rho $.  In this case the regularity requirement of the mapping $ T $ is $ C^k $, where $ k\geqslant3 $ and $ k>2\beta+1 $, and the essentially unique diffeomorphism  which conjugates $ T $ to $ \rho $ is of class $ C^{k-1-\beta-\varepsilon} $ for every $ \varepsilon>0 $. Recently, a  result in the H\"older sense appeared in  \cite{Khanin09Invent} based on Diophantine rotation with exponent $ \delta $ (see \eqref{diobeta}), which weakens $ C^k $ smoothness for the mapping $ T $ to $ C^{2+\alpha} $, through an improved Denjoy's inequality of the H\"older type, where $ 0 \leqslant \delta  < \alpha  \leqslant 1 $ and $ \alpha  - \delta  < 1 $. The conclusion that the regularity of the conjugation at this point is of class $ {C^{1 + \alpha  - \delta }} $ is proved to be optimal, which sharpens the work  \cite{ETDScon} (the corresponding regularity is $ C^{1+\alpha  - \delta-\varepsilon} $ with any $ \varepsilon>0 $), i.e., the exponent $ 1 + \alpha  - \delta $ cannot be higher in general settings due to the counterexamples in \cite{ETDScon}. It should be emphasized that this first result on optimal regularity is based on the use of cross-ratio distortion estimates for $ T $  of class $ C^{2+\alpha} $ with $ \alpha>0 $, which is a conceptually new approach.

However, much recent work requires $ C^k $ (at least $ k \geqslant 2 $) plus certain  H\"older continuity for $ T $, and one of our motivations is to weaken the regularity  to only $ C^2 $ (and  preserves $ C^1 $ conjugacy simultaneously), which is the weakest case that might be achieved due to certain counterexamples, see \cite{MR662606,MR0581808,MR0482815}. This is quite necessary because H\"older continuity is not sufficient to deal with all problems of finite smoothness from the perspective of  Baire category. Therefore,  we are actually still a long way from the true critical situation. As an illustration, let us consider some $ T $ with continuous $ D^2 T= - {\left( {\log \left( {x\left( {1 - x} \right)} \right)} \right)^{ - 1}}$ (define $ D^2 T =0$ at end points $ 0 $ and $ 1 $) which cannot be characterized by arbitrary H\"older type, since $ {D^2}T \sim {\left( {\log {x^{ - 1}}} \right)^{ - 1}} $ as $ x \to 0^+ $, that is, the Logarithmic H\"older type with index $ 1 $ in our terminology (see Section \ref{SecModulus}), and thus this case cannot be analyzed through the well known results via only H\"older continuity.

In this paper, we extend cross-ratio distortion estimates to the weakest case and study $ C^1 $ conjugacy on the circle from a different perspective. This corresponds exactly to the opinion proposed by  Khanin and Teplinsky in \cite{Khanin09Invent}, who suggested that these powerful tools would prove useful in other problems (different from the H\"older regularity via Diophantine irrationality) involving circle diffeomorphisms.  Specifically, by introducing the definition of modulus of continuity in  Section \ref{SecModulus}, we first establish the cross-ratio distortion estimates for  any $ C^2 $ strictly monotone function, and then obtain the corresponding Denjoy-type inequality for any irrational rotation in Section \ref{Cross-ratio estimates via modulus of continuity}. Via these tools, we present the following main result (in Section \ref{Circle diffeomorphisms}), which contains  a sufficient integrability condition under which $ C^2 $  orientation-preserving circle   diffeomorphisms can be $ C^1 $ conjugated to irrational rotations (characterized by partial quotients rather Diophantine or Bruno  condition):
\\
\textit{\textbf{[Main Theorem]}\ Let $ T $ be a $ C_{2,\varpi} $  orientation-preserving circle diffeomorphism with	rotation number $ \rho(T)=[k_1,k_2, \ldots ] $, where $ k_{n+1}=\mathcal{O}(\varphi(n)) $, and $ \varpi $ is a modulus of continuity. Then the conjugation in Denjoy's theory is differentiable if
	\begin{equation}\notag
		\int_0^1 {\left( {\int_0^y {\varphi \left( {{{\log }_\lambda }x} \right)dx} } \right)\frac{{\varpi \left( y \right)}}{{{y^2}}}dy}  <  + \infty,
	\end{equation}
	where $ 0<\lambda<1 $ is a definite constant. Detailed definitions and notations are provided in Sections \ref{SecModulus} and \ref{Circle diffeomorphisms}.
}
\\
We then  further investigate the higher regularity of the conjugation, following the above main theorem, which slightly improves part of the optimal result in \cite{Khanin09Invent}. Besides, some explicit examples are provided, including the case of combining partial quotients with probability distributions.  Finally, in Section \ref{Optimality about our integrability condition} we show certain optimality of our results, from the perspective of preserving and improving $ C^1 $ conjugacy, etc.  To the best of our knowledge, our  integrability condition seems to explicitly link irrationality with regularity \textit{for the first time} and is relatively easy to verify. Now we obtain the optimal integrability condition about the irrationality and regularity.

Thereby \textit{our main theorem develops the classical Denjoy's and Herman's theory (see Sections \ref{Subdenjoy} and \ref{SubHerman} respectively) since the topological conjugacy is elevated to differentiable type and the smoothness assumption of $ T $ is reduced to only $ C^2 $ (in fact, $ C^2 $ plus some modulus of continuity which is much weaker than the H\"older type).} Actually, it should be emphasized that the $ C^2 $ regularity  for the mapping $ T $ cannot be further weakened, otherwise the conjugation might not be differentiable due to the loss of regularity and  irrationality of the rotation and might even become only pure singular, see Section \ref{Optimality about our integrability condition}.

\section{Modulus of continuity}\label{SecModulus}
To state the results in this paper, we first introduce the definition of modulus of continuity, which describes continuity weaker than the H\"older type. It has  attracted a lot of attention in dynamical systems, see for instance, Bufetov and Solomyak \cite{MR3209350}, Duarte and Klein \cite{MR3262622}, Fan and Jiang \cite{MR1860763}, Tong and Li \cite{CCM}, among others. Modulus of continuity  characterizes the complexity of dynamical systems, and obviously, not all dynamics can be preserved in dynamical systems of arbitrary complexity. Therefore, finding the inner connection between invariance and complexity and touching criticality is a fundamental but difficult topic. \textit{Generally, certain optimal integrability conditions for modulus of continuity always arise in critical cases, and they indeed solve some problems completely and perfectly.} Back to our concern,  Herman \cite{Herman79}  discussed the existence of circle diffeomorphisms via modulus of continuity and provided a stronger answer to Arnol'd's conjecture.  Katznelson and Ornstein \cite{ETDScon}  also considered problems similar to that in this paper (the terminology $ \mathscr{H}^{2+\psi} $ with a modulus of continuity $ \psi $ in Sections 2, 3 and 4 is equivalent to $ C_{2,\varpi}({\mathbb{T}^1}) $ here). See also Kim and Koberda \cite{MR4121156,MR4128339} on circle diffeomorphisms via the tool of modulus of continuity. 

To study the generalized Denjoy-type inequality provided in Section \ref{DenjoyDenjoy}, and further $ C^1 $ conjugacy for circle diffeomorphisms, even more accurate regularity in this paper  (including some optimal cases), we are in a position to formulate the definition of modulus of continuity, see Herman \cite{Herman79}. It should be noted that there are many definitions of modulus of continuity, and there are some subtle differences between them, for example, see  \cite{MR1860763,Herman79,CCM} and references therein.  We do not pursue that.

Throughout this paper, $ \mathcal{O}( \cdot ) $, $ o( \cdot ) $ and $  \sim  $ are uniform with respect to $ n , m \to +\infty$, or $ x \to 0^+ $ without causing ambiguity. Note that $ a\left( {n,m} \right) \sim b\left( {n,m} \right) $ implies that there exists a universal constant $ C_1,C_2>0 $ independent of $ n,m $ (may depend on other parameters), such that $ {C_1}b \leqslant a \leqslant {C_2}b $.

\begin{definition}\label{definition1}
	Denote by $ \varpi (x) $ a modulus of continuity, which is a strictly monotonic increasing continuous function defined on $ \mathbb{R}^+ $, such that 
	\begin{itemize}
		\item[(1)] $ \varpi \left( {x + y} \right) \leqslant \varpi \left( x \right) + \varpi \left( y \right) $ for $ x,y>0 $;
		
		\item[(2)] $ \varpi(px) \leqslant p\varpi(x) $ for $ p \in \mathbb{N}^+ $ and $ x>0 $;
		
		\item[(3)] $ \varpi(ax) \leqslant ([a]+1)\varpi(x) $ for $ a \in \mathbb{R}^+ $ and $ x>0 $.
	\end{itemize}
\end{definition}

The above definition is an extension of the classical Lipschitz and H\"older cases, which correspond to $ \varpi\sim x $ and $ \varpi \sim x^{\alpha} $ with some $ 0<\alpha<1 $, respectively. Further, there is a plethora of important examples that fail to be characterized by the $ \varepsilon $-H\"older type  for any $ \varepsilon \in (0,1) $. For instance, the Logarithmic H\"older type $ \varpi  \sim {\left( {\log {x^{ - 1}}} \right)^{ - \alpha }} $ with respect to $ 0 < x < 1 $ and index $ \alpha>0 $, and some examples generated from power series below. Note (1) implies that $ \varpi=\mathcal{O}(x) $, that is, could not be better than the Lipschitz type, otherwise all functions with this modulus of continuity must be some constants, see Definition \ref{con2.4}. In fact, these conditions hold automatically for most monotone functions of the type $ \varpi=\mathcal{O}(x) $, we do not pursue this point and focus on the order of $ \varpi $ at $ 0^+ $ instead, which is indeed essential to characterize the complexity of the continuity \textit{on a bounded domain throughout this paper}, that is, just focus on the case on the interval $ \left( {0,\delta } \right] $ with some $ \delta>0 $ instead of $ \mathbb{R}^+ $.

Although modulus of continuity can be defined in various ways, Dini integrability condition on modulus of continuity is universal and appears frequently in various fields, including dynamical systems and PDEs. Specifically, a modulus of continuity $ \varpi(x) $ on $ \left( {0,\delta } \right] $ with $ \delta>0 $ is said to satisfy the \textit{Dini condition} (or be of Dini continuous type) \cite{CCM} if
\begin{equation}\label{DINI}
	\int_0^\delta  {\frac{{\varpi \left( x \right)}}{x}dx}  <  + \infty.
\end{equation}
As an example, for the Logarithmic H\"older type $ \varpi  \sim {\left( {\log {x^{ - 1}}} \right)^{ - \alpha }} $ with $ \alpha>0 $ above, then $ \varpi $ satisfies the Dini condition \eqref{DINI} at this point if and only if $ \alpha>1 $. Note that the Dini condition can also be characterized with respect to the summability of evaluated along geometric sequences, see \cite{arxiv}. Namely, \eqref{DINI} holds if and only if
\[\sum\limits_{n = 1}^\infty  { \varpi \left( {{\theta ^n}\delta } \right)}  <  + \infty \]
for any $ \theta \in (0,1) $. Actually we can verify that
\begin{equation}\label{deltapiao}
	\widetilde \varpi \left( x \right): = \sum\limits_{n = 1}^\infty  { \varpi \left( {{\theta ^n}x} \right)} ,\quad x \in \left( {0,\delta } \right]
\end{equation}
is also a modulus of continuity at this point, see \cite{MR1860763}. Such series functions are obviously extremely difficult to analyze in specific problems, so it is necessary to introduce certain integrability conditions, such as  \eqref{DINI} and etc (in some cases the Dini condition might not be sufficient to preserve certain dynamical properties, such as regularity,  \textit{and thus stronger integrability conditions must arise as we will see later}).  Here we introduce a construction method about Dini continuous type, see \cite{arxiv}: Let $ {\left\{ {{a_j}} \right\}_{j \in {\mathbb{N}^ + }}} \in {c_0} $ and $ {\left\{ {{\gamma _j}} \right\}_{j \in {\mathbb{N}^ + }}} \in {\ell _1} $ be sequences of positive numbers, and $ \mathop {\lim }\nolimits_{j \to  + \infty } {a_j} = 0 $. We further assume that there exist $ \tau,{x_ * }>0  $  such that $  \tau  < {x_ * } $, and
\[\sum\limits_{j = 1}^\infty  {{a_j}x_ * ^{{\gamma _j}}}  <  + \infty ,\quad\sum\limits_{j = 1}^\infty  {{a_j}\frac{{{\tau ^{{\gamma _j}}}}}{{{\gamma _j}}}}  <  + \infty .\]
Then the function
\begin{equation}\label{powermoc}
	\varpi \left( x \right) \sim \sum\limits_{j = 1}^\infty  {{a_j}{x^{{\gamma _j}}}}
\end{equation}
is indeed a modulus of continuity on $ \left( {0,{x_ * }} \right] $, and satisfies the Dini condition \eqref{DINI}. As an explicit illustration, $ \varpi \left( x \right) = \sum\nolimits_{j = 1}^\infty  {\frac{{\sqrt[j]{x}}}{{{2^j}}}}  $.

Next we define the ``strong'' and ``weak'' properties, as well as the uniform continuity of the function  with respect to the modulus of continuity. Denote by $ \mathcal{D} $ a connected region in $ \mathbb{R}^d $ with some $ d \in \mathbb{N}^+ $, and $ |\cdot| $ represents the sup-norm.

\begin{definition}\label{bijiao}
	For two modulus of continuity $ \varpi $ and ${\varpi ^ * }  $, we say that $ {\varpi ^ * } $ is weaker than $ \varpi $ ($ {\varpi ^ * } \gtrsim \varpi  $ or $ {\varpi} \lesssim \varpi ^ *   $ for short), if
	\[\mathop {\overline {\lim } }\limits_{x \to {0^ + }} \frac{{\varpi \left( x \right)}}{{{\varpi ^ * }\left( x \right)}} <  + \infty.\]
	Further, we say that $ {\varpi ^ * } $ is strictly weaker than $ \varpi $ if
	\[\mathop {\overline {\lim } }\limits_{x \to {0^ + }} \frac{{\varpi \left( x \right)}}{{{\varpi ^ * }\left( x \right)}} =0 .\]
\end{definition}
\begin{remark}
	Obviously any modulus of continuity $ \varpi_0  $ is weaker than $ x $ ($ \varpi_0\gtrsim x $). The H\"older type $ \varpi_1\sim x^\alpha $ with any $ 0<\alpha<1 $ is strictly weaker than The Lipschitz type $ \varpi_2\sim x $. The Logarithmic H\"older type $ \varpi_3  \sim {\left( {\log {x^{ - 1}}} \right)^{ - \beta }} $ with any $ \beta>0 $ is strictly weaker than $ \varpi_1 $.
\end{remark}
\begin{remark}
	``Strictly'' implies that $ \varpi_1 $ cannot be accurately characterized by any modulus of continuity of type $ \varpi_2 $, as we have  mentioned  in the introduction.
\end{remark}

\begin{definition}\label{con2.4}
	A function $ f(x) $ is said to be $ C_{k,\varpi} $ with some $ k \in \mathbb{N} $ on $ \mathcal{D} $  ($ f \in C_{k,\varpi} (\mathcal{D}) $ for short), if $ f(x) \in C^k(\mathcal{D}) $ and
	\[\left| {f^{(k)}\left( x \right) - f^{(k)}\left( y \right)} \right| \leqslant {\varpi}\left( {\left| {x - y} \right|} \right),\quad\forall x,y \in \mathcal{D},\; x\ne y .\]
\end{definition}
\begin{remark}\label{666}
	The above definition can be easily extended to the higher dimensional (even infinite dimensional) case. Further	assume that  $ \mathcal{D}  $  is bounded and closed.  If $ f \in C^{k}\left( {\mathcal{D} } \right) $ with some $ k \in \mathbb{N} $, then $ D^{k}f $ automatically has a modulus of continuity $ \varpi $, i.e., $ f $ is $ C_{k,\varpi} $ on $ \mathcal{D} $. However, in the infinite dimensional case, continuity does not imply the existence of modulus of continuity,  because the compactness may be absent at this point. But maybe one could choose some appropriate norms to avoid that.
\end{remark}

\section{Cross-ratio estimates via modulus of continuity} \label{Cross-ratio estimates via modulus of continuity}

Yoccoz \cite{MR0741080} introduced the cross-ratio distortion estimates (asymptotics of double ratios) in dynamical systems for the first time, and proved that  there are no analytic Denjoy counterexamples. See also \cite{MR0997312,Khanin09Invent,MR0968483}. It is worth mentioning that this powerful tool can also be employed to study Denjoy-type inequality under higher smoothness, such as $ C^3 $'s type based on Schwartz derivatives by Teplinsky \cite{MR2461039,Teplinski09}. Objectively speaking, the cross-ratio distortion estimates greatly simplify the conjugacy analysis of circle diffeomorphisms and makes the proof elementary (instead of studying the fundamental segments directly).

We first present some basic notions. Let $ f $ be a strictly increasing function and $ f' $ does not vanish. The \textit{ratio} of three pairwise distinct points $ x_1,x_2,x_3 $ is defined as
\[\mathrm{R}\left( {{x_1},{x_2},{x_3}} \right): = \frac{{{x_1} - {x_2}}}{{{x_2} - {x_3}}},\]
and the \textit{ratio distortion} with respect to of those points and the given $ f $ is
\begin{align*}
	\mathrm{D}\left( {{x_1},{x_2},{x_3};f} \right): &= \frac{{\mathrm{R}\left( {f\left( {{x_1}} \right),f\left( {{x_2}} \right),f\left( {{x_3}} \right)} \right)}}{{\mathrm{R}\left( {{x_1},{x_2},{x_3}} \right)}}\\
	& = \frac{{f\left( {{x_1}} \right) - f\left( {{x_2}} \right)}}{{{x_1} - {x_2}}}:\frac{{f\left( {{x_2}} \right) - f\left( {{x_3}} \right)}}{{{x_2} - {x_3}}}.
\end{align*}
The \textit{cross-ratio} of four pairwise distinct points $ x_1,x_2,x_3,x_4 $ is denoted by
\[\mathrm{Cr}\left( {{x_1},{x_2},{x_3},{x_4}} \right) = \frac{{\left( {{x_1} - {x_2}} \right)\left( {{x_3} - {x_4}} \right)}}{{\left( {{x_2} - {x_3}} \right)\left( {{x_4} - {x_1}} \right)}},\]
and the \textit{cross-ratio distortion} of those points with respect to $ f $ is termed
\[\mathrm{Dist}\left( {{x_1},{x_2},{x_3},{x_4};f} \right) = \frac{{\mathrm{Cr}\left( {f\left( {{x_1}} \right),f\left( {{x_2}} \right),f\left( {{x_3}} \right),f\left( {{x_4}} \right)} \right)}}{{\mathrm{Cr}\left( {{x_1},{x_2},{x_3},{x_4}} \right)}}.\]
One can easily verify that
\[\mathrm{D}\left( {{x_1},{x_2},{x_3};f \circ g} \right) = \mathrm{D}\left( {{x_1},{x_2},{x_3};g} \right) \cdot \mathrm{D}\left( {g\left( {{x_1}} \right),g\left( {{x_2}} \right),g\left( {{x_3}} \right);f} \right),\]
\begin{equation}\label{KhIn2}
	\mathrm{Dist}\left( {{x_1},{x_2},{x_3},{x_4};f} \right) = \frac{{\mathrm{D}\left( {{x_1},{x_2},{x_3};f} \right)}}{{\mathrm{D}\left( {{x_1},{x_4},{x_3};f} \right)}},
\end{equation}
and
\begin{align*}
	&\;\mathrm{Dist}\left( {{x_1},{x_2},{x_3},{x_4};f \circ g} \right)\\
	= &\;\mathrm{Dist}\left( {{x_1},{x_2},{x_3},{x_4};g} \right) \cdot \mathrm{Dist}\left( {g\left( {{x_1}} \right),g\left( {{x_2}} \right),g\left( {{x_3}} \right),g\left( {{x_4}} \right);f} \right).
\end{align*}
In fact, the above definitions are also well defined for the case where two (or three) points are identical, as long as $ \frac{{f\left( x \right) - f\left( x \right)}}{{x - x}}: = f'\left( x \right) $ is defined.

\begin{proposition}[Cross-ratio estimates]\label{lemmaD}
	Assume $ f $ is $ C_{2,\varpi} $ with some modulus of continuity $ \varpi $, and $ f'>0 $ on $ [A,B] $. Then for any $ x_1, x_2, x_3 \in [A,B] $, the following estimate holds:
	\[\mathrm{D}\left( {{x_1},{x_2},{x_3};f} \right) = 1 + \left( {{x_1} - {x_3}} \right)\left( {\frac{{f''}}{{2f'}} + \mathcal{O}\left( {\varpi \left( \Delta  \right)} \right)} \right),\]
	where $ \Delta  = \mathop {\max }\nolimits_{1 \leqslant i \leqslant 3} \left\{ {{x_i}} \right\} - \mathop {\min }\nolimits_{1 \leqslant i \leqslant 3} \left\{ {{x_i}} \right\} $,  and the values of both $ f'' $ and $ f' $ can be taken at any points between $ \mathop {\min }\nolimits_{1 \leqslant i \leqslant 3} \left\{ {{x_i}} \right\} $ and $ \mathop {\max }\nolimits_{1 \leqslant i \leqslant 3} \left\{ {{x_i}} \right\} $.
\end{proposition}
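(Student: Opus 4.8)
The plan is to expand $\mathrm{D}(x_1,x_2,x_3;f)$ directly using a second-order Taylor expansion of $f$ with a controlled remainder, exploiting the hypothesis $f\in C_{2,\varpi}$. Write $\mathrm{D}(x_1,x_2,x_3;f)$ as the quotient of divided differences $\frac{f(x_1)-f(x_2)}{x_1-x_2}$ and $\frac{f(x_2)-f(x_3)}{x_2-x_3}$. For each such divided difference, I would fix a reference point $c$ lying between $\min_i x_i$ and $\max_i x_i$ (the value at which the final $f''$ and $f'$ are to be evaluated) and write, for $u,v\in[A,B]$,
\[
\frac{f(u)-f(v)}{u-v}=f'(c)+\frac{f''(c)}{2}\bigl((u-c)+(v-c)\bigr)+\mathrm{Rem},
\]
where the remainder is obtained from the integral form of Taylor's theorem. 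The key point is that $f''$ is not merely continuous but has modulus of continuity $\varpi$, so $|f''(t)-f''(c)|\le\varpi(|t-c|)\le\varpi(\Delta)$ for all $t$ in the relevant interval; this forces $|\mathrm{Rem}|\le C\,\varpi(\Delta)\,|u-v|$ after estimating the double integral $\int\int|f''(\cdot)-f''(c)|$ that arises from the second-order remainder. Thus each divided difference equals $f'(c)+\frac{f''(c)}{2}((u-c)+(v-c))+\mathcal{O}(\varpi(\Delta)\cdot(\text{length}))$.

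Next I would divide the two expansions. Using $\frac{f'(c)+a_1}{f'(c)+a_2}=1+\frac{a_1-a_2}{f'(c)}+\mathcal{O}\bigl((a_1-a_2)^2/f'(c)^2\bigr)$ and noting $f'(c)$ is bounded away from $0$ on the compact interval $[A,B]$, the leading correction comes from the difference of the two linear terms: the $(u-c)+(v-c)$ pieces for $(u,v)=(x_1,x_2)$ and $(u,v)=(x_2,x_3)$ differ by exactly $x_1-x_3$, giving the factor $(x_1-x_3)\frac{f''(c)}{2f'(c)}$. The quadratic term $\mathcal{O}((a_1-a_2)^2)$ is of order $(x_1-x_3)^2\sim (x_1-x_3)\cdot\mathcal{O}(\Delta)$, and since $\varpi$ is a modulus of continuity it is bounded below on any interval bounded away from $0$ while $\Delta\mapsto\Delta$ is $\mathcal O(\varpi(\Delta))$ is \emph{not} automatic — but on the bounded domain $[A,B]$ one has $\Delta\le B-A$, and for the asymptotic statement one absorbs $\mathcal O(\Delta)$ into $\mathcal O(\varpi(\Delta))$ only when $\varpi(\Delta)\gtrsim\Delta$ fails; instead I would simply keep $\mathcal O(\Delta^2)=\mathcal O(\Delta)\cdot\mathcal O(\Delta)$ and observe $\Delta=\mathcal O(1)$ so this term is $(x_1-x_3)\cdot\mathcal{O}(\Delta)$, which is dominated once we recall $\Delta\lesssim\varpi(\Delta)/\varpi(B-A)\cdot(B-A)$ is false in general — so more carefully, since every modulus of continuity satisfies $\varpi(x)/x$ is essentially non-increasing (from subadditivity, property (3)), we get $\Delta\le \frac{B-A}{\varpi(B-A)}\varpi(\Delta)$, hence $\mathcal O(\Delta)=\mathcal O(\varpi(\Delta))$ with constant depending on $[A,B]$. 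Folding the error from the remainder and the quadratic term together yields the claimed $(x_1-x_3)\bigl(\frac{f''}{2f'}+\mathcal{O}(\varpi(\Delta))\bigr)$.

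The main obstacle I anticipate is precisely the bookkeeping of \emph{which} remainder terms are genuinely $\mathcal{O}(\varpi(\Delta))$ versus merely $\mathcal{O}(\Delta)$, and justifying that on the bounded interval $[A,B]$ the latter is absorbed by the former — this rests on the elementary but crucial fact that $\varpi(x)/x$ is bounded below on $[\,\varepsilon,B-A\,]$ for each fixed $\varepsilon$, or equivalently that one only needs the estimate to hold uniformly and can trade a factor of $(B-A)/\varpi(B-A)$ into the implied constant. A secondary point requiring care is the degenerate case where two or three of the $x_i$ coincide: there the divided differences become derivatives, and I would check that the Taylor expansion above passes to the limit continuously (using $\frac{f(x)-f(x)}{x-x}:=f'(x)$ as stipulated), so no separate argument is needed. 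The rest is routine: estimating the Taylor remainder $\int_0^1(1-t)\bigl(f''(c+t(u-c))-f''(c)\bigr)\,dt$ in absolute value by $\varpi(|u-c|)\le\varpi(\Delta)$ and collecting terms.
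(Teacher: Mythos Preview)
Your approach via a common reference point $c$ and Taylor expansion is natural, but there is a real gap in the remainder control. The claim that each divided difference satisfies
\[
\frac{f(u)-f(v)}{u-v}=f'(c)+\frac{f''(c)}{2}\bigl((u-c)+(v-c)\bigr)+\mathrm{Rem},\qquad |\mathrm{Rem}|\le C\,\varpi(\Delta)\,|u-v|,
\]
is false: writing $\mathrm{Rem}=\int_0^1\!\int_c^{(1-s)v+su}(f''(t)-f''(c))\,dt\,ds$, the inner integral has size at most $\varpi(\Delta)\,|(1-s)v+su-c|\le\varpi(\Delta)\Delta$, so the honest bound is $|\mathrm{Rem}|\le C\varpi(\Delta)\Delta$, \emph{independent} of $|u-v|$. (Concretely: take $f''(t)=t$, $c=x_3=0$, $x_1=1$, $x_2=1-\epsilon$; the remainder for the first divided difference is $(x_1^2+x_1x_2+x_2^2)/6\approx\tfrac12$, not $\mathcal{O}(\epsilon)$.) When you then subtract the two expansions you obtain only $a_1-a_2=\tfrac{f''(c)}{2}(x_1-x_3)+\mathcal{O}(\varpi(\Delta)\Delta)$, and in the configuration where $x_2$ is \emph{extremal} (so $|x_1-x_3|\ll\Delta$) this error cannot be written as $(x_1-x_3)\cdot\mathcal{O}(\varpi(\Delta))$; the proposition fails to follow.

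The paper resolves this by splitting on the position of $x_2$. In Case~1 ($x_2$ between $x_1$ and $x_3$) one has $\Delta=|x_1-x_3|$ and an expansion like yours (centred at $x_2$, via the Lagrange form of the remainder) works directly. In Cases~2 and~3 ($x_2$ extremal) the paper uses the algebraic identity
\[
\mathrm{D}(x_1,x_2,x_3;f)-1=\frac{x_1-x_3}{x_2-x_3}\left(\frac{f(x_2)-f(x_1)}{x_2-x_1}-\frac{f(x_1)-f(x_3)}{x_1-x_3}\right)\Big/\frac{f(x_2)-f(x_3)}{x_2-x_3}
\]
(and its analogue) to pull the factor $(x_1-x_3)$ out \emph{before} estimating, reducing to Case~1. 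Your integral route is also salvageable, but only after an extra step you did not take: exploit the symmetry of the divided difference (substitute $s\mapsto 1-s$ in one of the two remainders) to obtain
\[
R_1-R_2=\int_0^1\!\int_{sx_2+(1-s)x_3}^{sx_2+(1-s)x_1}\bigl(f''(t)-f''(c)\bigr)\,dt\,ds,
\]
whose inner interval has length $(1-s)|x_1-x_3|$, giving $|R_1-R_2|\le\tfrac12\varpi(\Delta)\,|x_1-x_3|$ directly. Without some device of this kind the argument as written does not close.
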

\begin{proof}
	One can trivially verify that the conclusion holds automatically as long as there exists $ x_i=x_j $ with $ i \ne j $, we thus assume that $ x_i\ne x_j $ if $ i \ne j $. Three different cases need to be discussed.  We  first prove a basic case, and the rest of the cases can be directly obtained through it. Let $ x^* $ arbitrarily lie between  $ \mathop {\min }\nolimits_{1 \leqslant i \leqslant 3} \left\{ {{x_i}} \right\} $ and $ \mathop {\max }\nolimits_{1 \leqslant i \leqslant 3} \left\{ {{x_i}} \right\} $, and denote $ \Delta  = \mathop {\max }\nolimits_{1 \leqslant i \leqslant 3} \left\{ {{x_i}} \right\} - \mathop {\min }\nolimits_{1 \leqslant i \leqslant 3} \left\{ {{x_i}} \right\} $.
	
	\noindent\textit{Case 1:} $ x_2 $ lies between $ x_1 $ and $ x_3 $.  By applying the Mean Value Theorem, we obtain (i) $ \zeta_1 $ between $ x_1 $ and $ x_2 $, (ii) $ \zeta_2,\zeta_3 $ between $ x_2 $ and $ x_3 $, (iii) $ \zeta_4 $ between $ \zeta_3 $ and $ x^* $, such that
	\begin{align}
		&\;\frac{{f\left( {{x_1}} \right) - f\left( {{x_2}} \right)}}{{{x_1} - {x_2}}} - \frac{{f\left( {{x_2}} \right) - f\left( {{x_3}} \right)}}{{{x_2} - {x_3}}}\notag\\
		= &\;\left( {f'\left( {{x_2}} \right) + \frac{1}{2}f''\left( {{\zeta _1}} \right)\left( {{x_1} - {x_2}} \right)} \right) - \left( {f'\left( {{x_2}} \right) + \frac{1}{2}f''\left( {{\zeta _2}} \right)\left( {{x_3} - {x_2}} \right)} \right)\notag\\
		= &\;\frac{1}{2}\left( {{x_1} - {x_3}} \right)\left( {f''\left( {{x^ * }} \right) + \left( {f''\left( {{\zeta _1}} \right) - f''\left( {{x^ * }} \right)} \right) + \left( {f''\left( {{\zeta _1}} \right) - f''\left( {{\zeta _2}} \right)} \right)\frac{{{x_3} - {x_2}}}{{{x_1} - {x_3}}}} \right)\notag\\
		\label{Case1d} = &\;\left( {{x_1} - {x_3}} \right)\left( {\frac{1}{2}f''\left( {{x^ * }} \right) + \mathcal{O}\left( {\varpi \left( \Delta  \right)} \right)} \right),
	\end{align}
	and
	\begin{align}
		{\left( {\frac{{f\left( {{x_2}} \right) - f\left( {{x_3}} \right)}}{{{x_2} - {x_3}}}} \right)^{ - 1}} &= \frac{1}{{f'\left( {{\zeta _3}} \right)}} = \frac{1}{{f'\left( {{x^ * }} \right)}}\frac{{f'\left( {{x^ * }} \right)}}{{f'\left( {{\zeta _3}} \right)}}\notag\\
		& = \frac{1}{{f'\left( {{x^ * }} \right)}}\frac{{f'\left( {{\zeta _3}} \right) + f''\left( {{\zeta _4}} \right)\left( {{x^ * } - {\zeta _3}} \right)}}{{f'\left( {{\zeta _3}} \right)}}\notag\\
		\label{Case1dd}& = \frac{1}{{f'\left( {{x^ * }} \right)}}\left( {1 + \mathcal{O}\left( \Delta  \right)} \right).
	\end{align}
	At this point, it follows that
	\begin{align*}
		&\;\mathrm{D}\left( {{x_1},{x_2},{x_3};f} \right) \\
		= &\;1 + \left( {\frac{{f\left( {{x_1}} \right) - f\left( {{x_2}} \right)}}{{{x_1} - {x_2}}} - \frac{{f\left( {{x_2}} \right) - f\left( {{x_3}} \right)}}{{{x_2} - {x_3}}}} \right):\frac{{f\left( {{x_2}} \right) - f\left( {{x_3}} \right)}}{{{x_2} - {x_3}}}\\
		= &\;1 + \left( {{x_1} - {x_3}} \right)\left( {\frac{1}{2}f''\left( {{x^ * }} \right) + \mathcal{O}\left( {\varpi \left( \Delta  \right)} \right)} \right) \cdot \frac{1}{{f'\left( {{x^ * }} \right)}}\left( {1 + \mathcal{O}\left( \Delta  \right)} \right)\\
		= &\;1 + \left( {{x_1} - {x_3}} \right)\left( {\frac{{f''\left( {{x^ * }} \right)}}{{2f'\left( {{x^ * }} \right)}} + \mathcal{O}\left( {\varpi \left( \Delta  \right)} \right)} \right).
	\end{align*}

	\noindent	\textit{Case 2:} $ x_1 $ lies between $ x_2 $ and $ x_3 $.  In view of \eqref{Case1d} and \eqref{Case1dd} in Case 1 we get
	\begin{align*}
		&\;\mathrm{D}\left( {{x_1},{x_2},{x_3};f} \right)\\
		= &\;1 + \left[ {\frac{{{x_1} - {x_3}}}{{{x_2} - {x_3}}}\left( {\frac{{f\left( {{x_2}} \right) - f\left( {{x_1}} \right)}}{{{x_2} - {x_1}}} - \frac{{f\left( {{x_1}} \right) - f\left( {{x_3}} \right)}}{{{x_1} - {x_3}}}} \right)} \right]:\frac{{f\left( {{x_2}} \right) - f\left( {{x_3}} \right)}}{{{x_2} - {x_3}}}\\
		= &\;1 + \left[ {\frac{{{x_1} - {x_3}}}{{{x_2} - {x_3}}}\left( {\left( {{x_2} - {x_3}} \right)\left( {\frac{1}{2}f''\left( {{x^ * }} \right) + \mathcal{O}\left( {\varpi \left( \Delta  \right)} \right)} \right)} \right)} \right] \cdot \frac{{1 + \mathcal{O}\left( \Delta  \right)}}{{f'\left( {{x^ * }} \right)}}\\
		= &\;1 + \left( {{x_1} - {x_3}} \right)\left( {\frac{{f''\left( {{x^ * }} \right)}}{{2f'\left( {{x^ * }} \right)}} + \mathcal{O}\left( {\varpi \left( \Delta  \right)} \right)} \right).
	\end{align*}
	
	\noindent	\textit{Case 3:} $ x_3 $ lies between $ x_1 $ and $ x_2 $. The conclusion holds  by the same argument as in  Case 2 because
	\begin{align*}
		&\;\mathrm{D}\left( {{x_1},{x_2},{x_3};f} \right)\\
		=&\; 1 + \left[ {\frac{{{x_1} - {x_3}}}{{{x_1} - {x_2}}}\left( {\frac{{f\left( {{x_1}} \right) - f\left( {{x_3}} \right)}}{{{x_1} - {x_3}}} - \frac{{f\left( {{x_2}} \right) - f\left( {{x_3}} \right)}}{{{x_2} - {x_3}}}} \right)} \right]:\frac{{f\left( {{x_2}} \right) - f\left( {{x_3}} \right)}}{{{x_2} - {x_3}}}.
	\end{align*}
	
	This completes the proof.
\end{proof}

\begin{proposition}[Cross-ratio distortion estimates]\label{lemmaDist}
	Assume $ f $ is $ C_{2,\varpi} $ with some modulus of continuity $ \varpi $, and $ f'>0 $ on $ [A,B] $. Then for any $ x_1, x_2, x_3, x_4 \in [A,B] $, the following estimate holds:
	\[\mathrm{Dist}\left( {{x_1},{x_2},{x_3},{x_4};f} \right) = 1 + \left( {{x_1} - {x_3}} \right)\mathcal{O}\left( {\varpi \left( \Delta  \right)} \right),\]
	where $ \Delta  = \mathop {\max }\nolimits_{1 \leqslant i \leqslant 4} \left\{ {{x_i}} \right\} - \mathop {\min }\nolimits_{1 \leqslant i \leqslant 4} \left\{ {{x_i}} \right\} $.
\end{proposition}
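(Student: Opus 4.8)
The plan is to derive Proposition~\ref{lemmaDist} directly from the already-established Proposition~\ref{lemmaD} by means of the algebraic identity \eqref{KhIn2}, which expresses the cross-ratio distortion as the quotient $\mathrm{Dist}(x_1,x_2,x_3,x_4;f)=\mathrm{D}(x_1,x_2,x_3;f)/\mathrm{D}(x_1,x_4,x_3;f)$. As in the proof of Proposition~\ref{lemmaD}, the conclusion is immediate (via \eqref{KhIn2} applied with the coincident-point convention) whenever two of the $x_i$ agree, so one may assume the four points are pairwise distinct.

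Next I would apply Proposition~\ref{lemmaD} to the numerator and to the denominator separately. The decisive observation is that Proposition~\ref{lemmaD} permits $f''$ and $f'$ in the leading term to be evaluated at \emph{any} point lying between the minimum and the maximum of the three arguments involved; since both triples $\{x_1,x_2,x_3\}$ and $\{x_1,x_4,x_3\}$ contain $x_1$ and $x_3$, the interval $[\min(x_1,x_3),\max(x_1,x_3)]$ is admissible for both, and I may take the common evaluation point $x^{*}=x_1$. Using the monotonicity of $\varpi$ together with the fact that each intrinsic diameter is at most $\Delta=\max_{1\le i\le 4}\{x_i\}-\min_{1\le i\le 4}\{x_i\}$, this gives
\[
\mathrm{D}(x_1,x_2,x_3;f)=1+(x_1-x_3)\bigl(c+\mathcal{O}(\varpi(\Delta))\bigr),\qquad \mathrm{D}(x_1,x_4,x_3;f)=1+(x_1-x_3)\bigl(c+\mathcal{O}(\varpi(\Delta))\bigr),
\]
with the \emph{same} real number $c=f''(x^{*})/(2f'(x^{*}))$.

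Then I would form the quotient. Writing numerator and denominator as $1+P+Q_1$ and $1+P+Q_2$ with $P=(x_1-x_3)c$ and $Q_i=(x_1-x_3)\mathcal{O}(\varpi(\Delta))$, the identical terms $P$ cancel in the difference: $\mathrm{Dist}-1=(Q_1-Q_2)/(1+P+Q_2)$, and $Q_1-Q_2=(x_1-x_3)\mathcal{O}(\varpi(\Delta))$. It remains only to bound the denominator away from zero: on the fixed segment $[A,B]$, where $f'>0$ and $f\in C^{2}$, the quantity $c$ stays bounded, so in the regime $\Delta\to 0^{+}$ (the meaning of $\mathcal{O}(\cdot)$ fixed in Section~\ref{SecModulus}) one has $|P+Q_2|\le 1/2$, hence $1/(1+P+Q_2)$ is bounded and can be absorbed into the $\mathcal{O}$. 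This yields $\mathrm{Dist}(x_1,x_2,x_3,x_4;f)=1+(x_1-x_3)\mathcal{O}(\varpi(\Delta))$, as claimed.

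The one point that genuinely needs care — and where the argument would silently fail if handled sloppily — is making sure the leading terms in the numerator and denominator are \emph{literally the same number} before any error is incurred, so that they cancel and the factor $(x_1-x_3)$ ends up multiplied only by the modulus-of-continuity error rather than by an $\mathcal{O}(1)$ quantity; this is exactly what the freedom in the evaluation point in Proposition~\ref{lemmaD} buys. Everything else is routine bookkeeping with the expansion $\tfrac{1+a}{1+b}=1+\tfrac{a-b}{1+b}$ and the monotonicity and subadditivity of $\varpi$.
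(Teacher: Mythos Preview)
Your proposal is correct and follows essentially the same route as the paper: both arguments invoke the identity \eqref{KhIn2}, apply Proposition~\ref{lemmaD} to the two ratio distortions with a \emph{common} evaluation point $x^{*}$ (which is legitimate precisely because $[\min(x_1,x_3),\max(x_1,x_3)]$ lies in the admissible range for both triples), and then observe the cancellation of the identical leading term $(x_1-x_3)f''(x^{*})/(2f'(x^{*}))$. The only cosmetic difference is that the paper expands the reciprocal and multiplies, writing the quotient as $(1+P+Q_1)(1-P+Q_2')$, whereas you use $\frac{1+a}{1+b}=1+\frac{a-b}{1+b}$ directly; the substance is identical.
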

\begin{proof}
	Recall \eqref{KhIn2}. Then direct calculation gives that
	\begin{align*}
		\mathrm{Dist}\left( {{x_1},{x_2},{x_3},{x_4};f} \right) &= \frac{{\mathrm{D}\left( {{x_1},{x_2},{x_3};f} \right)}}{{\mathrm{D}\left( {{x_1},{x_4},{x_3};f} \right)}}\\
		&= \left( {1 + \left( {{x_1} - {x_3}} \right)\frac{{f''\left( {{x^ * }} \right)}}{{2f'\left( {{x^ * }} \right)}} + \left( {{x_1} - {x_3}} \right)\mathcal{O}\left( {\varpi \left( \Delta  \right)} \right)} \right)\\
		&\;\;\;\;\cdot \left( {1 - \left( {{x_1} - {x_3}} \right)\frac{{f''\left( {{x^ * }} \right)}}{{2f'\left( {{x^ * }} \right)}} + \left( {{x_1} - {x_3}} \right)\mathcal{O}\left( {\varpi \left( \Delta  \right)} \right)} \right)\\
		&= 1 + \left( {{x_1} - {x_3}} \right)\left( {\mathcal{O}\left( {\varpi \left( \Delta  \right)} \right) + \mathcal{O}\left( \Delta  \right) + \mathcal{O}\left( {{\Delta ^2}} \right)} \right)\\
		& = 1 + \left( {{x_1} - {x_3}} \right)\mathcal{O}\left( {\varpi \left( \Delta  \right)} \right).
	\end{align*}
\end{proof}

\section{Circle diffeomorphisms}\label{Circle diffeomorphisms}
\subsection{Notations and lemmas} 
For a given $ \rho  \in {\mathbb{T}^1}\backslash \left\{ 0 \right\} $, the following continued fraction of $ \rho $  is uniquely determined:
\[\rho  = \frac{1}{{{k_1} + \frac{1}{{{k_2} + \frac{1}{{\frac{ \cdots }{{{k_n} + \frac{1}{ \cdots }}}}}}}}}: = \left[ {{k_1},{k_2}, \ldots ,{k_n}, \ldots } \right],\]
where $ k_n $ with $ n \in \mathbb{N}^+ $ are called the partial quotients of $ \rho $. Denote by $ {p_n}/{q_n} = \left[ {{k_1},{k_2}, \ldots ,{k_n}} \right] $ the $ n^\text{th} $ approximant of $ \rho $ (rational approximations), then $ p_n$ and $q_n $ satisfy the following recurrence relations for all $ n \in \mathbb{N} $
\begin{equation}\label{recurrent}
{p_{n + 1}} = {k_{n + 1}}{p_n} + {p_{n - 1}},\quad{q_{n + 1}} = {k_{n + 1}}{q_n} + {q_{n - 1}}
\end{equation}
with $ p_1=1,p_0=0 $ and $ q_1=k_1,q_0=1 $ (here we define $ p_{-1}=1,q_{-1}=0 $ for convenience).

Note that the rotation number $ \rho(T) $ of an orientation-preserving homeomorphism $ T $ on $ \mathbb{T}^1 $ is always well-defined, we therefore could study the conjugacy directly via the partial quotients as well as the approximant of the rotation.  Consider a marked point $ \xi_0 \in \mathbb{T}^1 $ and its trajectory $ {\xi _i} = {T^i}{\xi _0} $ with $ i \in {\mathbb{N}^ + } $, and pick out of it the sequence of the dynamical convergents $ \xi_{q_{n}} $ for $ n \in \mathbb{N} $, indexed by the denominators of the $ n^\text{th} $ approximant $ {p_n}/{q_n} $ of the rotation $ \rho(T)  $. Here we define $ \xi_{q_{-1}}=\xi_0 -1 $ for convenience. The arithmetic properties of rational approximations together with the combinatorial equivalence between $ T $ and the rigid rotation $ R_\rho(T) : \theta \to \theta + \rho(T) \mod 1 $ (i.e., the order of points on the circle for any trajectory coincides with the order of points for $ R_\rho(T) $) show that the dynamical convergents approach the initial point $ \xi_0 $ from both sides:
\begin{equation}\label{jiou}
{\xi _{{q_{ - 1}}}} < {\xi _{{q_1}}} <  \cdots  < {\xi _{{q_{2n + 1}}}} <  \cdots  < {\xi _0} <  \cdots  < {\xi _{{q_{2n}}}} <  \cdots  < {\xi _{{q_2}}} < {\xi _{{q_0}}}.
\end{equation}
In view of \eqref{jiou}, we denote by $ \Delta^n(\xi) $ the $ n ^{\text{th}}$ fundamental segment, where $ \Delta^n(\xi)=\left[ {\xi ,{T^{{q_n}}}\xi } \right] $ if $ n $ is even and $ \Delta^n(\xi)=\left[ {{T^{{q_n}}}\xi ,\xi } \right] $ if $ n $ is odd. For a given marked point $ \xi_0 \in \mathbb{T}^1 $, we denote that $ \Delta_0^{(n)}=\Delta^n(\xi_0) $ and $ \Delta_0^{(i)}=\Delta^n(\xi_i)=T^i\Delta_0^{(n)} $  for all $ i \in \mathbb{N}^+ $. Further, define
\[{l_n}: = {l_n}\left( T \right) = \mathop {\max }\limits_{\xi  \in {\mathbb{T}^1}} \left| {{\Delta ^{\left( n \right)}}\left( \xi  \right)} \right| = \|{T^{{q_n}}} - \mathrm{id}\|_{{{C^0}}}\]
and
\[{\Delta _n}: = {l_n}\left( {{R_\rho }} \right) = \left| {{q_n}\rho  - {p_n}} \right| = {\left( { - 1} \right)^n}\left( {{q_n}\rho  - {p_n}} \right).\]
At this point we have $ l_{-1}=\Delta_{-1}=1 $, and $ l_{n}, \Delta_n \in (0,1)$ for all $ n\in \mathbb{N}^+ $. Recall the recurrence relations \eqref{recurrent} of $ p_n $ and $ q_n $, we arrive at $ {\Delta _n} = {k_{n + 2}}{\Delta _{n + 1}} + {\Delta _{n + 2}} $ for $  - 1 \leqslant n \in \mathbb{Z} $. Since $ \Delta_n $ is strictly monotonically decreasing, we could extend it as a continuous function $ \Delta (\cdot):\left[ {1, + \infty } \right) \to (0,1] $ and denote by $  \Delta^{-1}  $ the inverse of $ \Delta $.

Next we provide several basic but  well-known lemmas  about the combinatorics of trajectories as well as estimates for fundamental segments. Their proofs are classical and can be found in \cite{Khanin09Invent} (see Lemmas 1, 2, 3, 4, 5, 7 respectively), which we omit here for the sake of brevity since they \textit{do not involve the irrationality or the modulus of continuity}.

\begin{lemma}\label{lemmadisjoint}
For any $ \xi \in \mathbb{T}^1 $ and $ 0<i<q_{n+1} $, the segments $ \Delta^{(n)} (\xi)$ and $ \Delta^{(n)} (T^{i}\xi)$ are disjoint (except at the endpoints). In particular, for any fixed $ \xi_0 $, all the segments $ \Delta^{(n)}_{i} $, $ 0 \leqslant i <q_{n+1} $, are disjoint.
\end{lemma}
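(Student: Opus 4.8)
\emph{The plan.} Only the cyclic order of orbit points enters, so I would transfer everything to the rigid rotation $R_\rho$. By the combinatorial equivalence recalled just before \eqref{jiou} --- the points $T^j\xi$ lie on $\mathbb{T}^1$ in the same cyclic order, with the same indexing, as the points $R_\rho^j\xi$ --- the segments $\Delta^{(n)}(\xi)$, $\Delta^{(n)}(T^i\xi)$ overlap beyond their endpoints exactly when the corresponding arcs for $R_\rho$ do; equivalently (since $T$ is $C^2$) one may push forward by the Denjoy conjugacy $\phi$, an orientation-preserving homeomorphism with $\phi\circ T=R_\rho\circ\phi$, which carries $\Delta^{(n)}(\xi)$ and $\Delta^{(n)}(T^i\xi)$ onto the analogous $R_\rho$-segments through $\phi(\xi)$ and $R_\rho^i\phi(\xi)$. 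Either way it suffices to prove the statement for $R_\rho$.

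\emph{Geometry for the rotation.} For $R_\rho$, use translation invariance to put $\xi=0$. Since $R_\rho^{q_n}0\equiv q_n\rho-p_n=(-1)^n\Delta_n\pmod 1$, the fundamental segment through $0$ is the arc $[0,\Delta_n]$ when $n$ is even and $[-\Delta_n,0]$ when $n$ is odd, an arc of length $\Delta_n$ (and $\Delta_n<\tfrac12$ in every case where the range $0<i<q_{n+1}$ is non-empty), while the one through $R_\rho^i0$ is exactly its $R_\rho^i$-translate. Two arcs of common length $\ell\le\tfrac12$ on $\mathbb{T}^1$ differing by a rotation through $t$ have disjoint interiors if and only if $\|t\|\ge\ell$, sharing a single endpoint precisely when $\|t\|=\ell$, where $\|\cdot\|$ is the distance to the nearest integer. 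Hence the claim for $R_\rho$ is equivalent to the inequality $\|i\rho\|\ge\Delta_n$ for every integer $i$ with $0<i<q_{n+1}$.

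\emph{The arithmetic core.} This inequality --- the best-approximation property of the convergents $p_n/q_n$ --- is the only nontrivial ingredient and is where I expect the real work to be. I would prove it directly: take $p$ the nearest integer to $i\rho$; since $|q_np_{n-1}-q_{n-1}p_n|=1$ (immediate from \eqref{recurrent}), there are unique $a,b\in\mathbb{Z}$ with $(i,p)=a(q_n,p_n)+b(q_{n-1},p_{n-1})$, and then $\|i\rho\|=|i\rho-p|=|a\Delta_n-b\Delta_{n-1}|$ because $q_j\rho-p_j=(-1)^j\Delta_j$. The constraint $0<i=aq_n+bq_{n-1}<q_{n+1}=k_{n+1}q_n+q_{n-1}$ restricts $(a,b)$: for $b=0$ one has $1\le a\le k_{n+1}$, giving $a\Delta_n\ge\Delta_n$; for $b\ne0$ with $ab\le0$ the two terms reinforce and the value is $\ge\Delta_{n-1}>\Delta_n$ (the case $a,b<0$ being ruled out by $i>0$); and for $a,b>0$ one necessarily has $a\le k_{n+1}-1$, so when $b=1$ the recurrence $\Delta_{n-1}=k_{n+1}\Delta_n+\Delta_{n+1}$ gives the value $(k_{n+1}-a)\Delta_n+\Delta_{n+1}>\Delta_n$, and $b\ge2$ is larger still. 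Equality $\|i\rho\|=\Delta_n$ occurs only for $(a,b)=(1,0)$, that is $i=q_n$, which is precisely the allowed shared-endpoint case. (Alternatively this may simply be quoted as the classical characterization of $q_{n+1}$ as the least $q\ge1$ with $\|q\rho\|<\|q_n\rho\|$.)

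\emph{The ``in particular''.} Fix $\xi_0$. For $0\le i<j<q_{n+1}$ we have $\Delta^{(n)}_i=\Delta^{(n)}(T^i\xi_0)$ and $\Delta^{(n)}_j=\Delta^{(n)}\big(T^{\,j-i}(T^i\xi_0)\big)$ with $0<j-i<q_{n+1}$, so applying the first part to the point $T^i\xi_0$ shows that $\Delta^{(n)}_i$ and $\Delta^{(n)}_j$ meet at most in a common endpoint. The whole difficulty is thus concentrated in the arithmetic core; the rest is bookkeeping once the combinatorial equivalence (or $\phi$) is granted.
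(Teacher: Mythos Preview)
Your argument is correct and follows the standard route. The paper itself does not supply a proof here: it states that this lemma (together with the five that follow) is classical and refers the reader to \cite{Khanin09Invent}, omitting the argument entirely. What you have written---reduce to the rigid rotation via the combinatorial equivalence (equivalently, via the Denjoy conjugacy $\phi$), then invoke the best-approximation property $\|i\rho\|\ge\Delta_n=\|q_n\rho\|$ for $0<i<q_{n+1}$---is precisely the standard proof one finds in the cited reference, and your direct verification of the arithmetic core via the basis $\{(q_n,p_n),(q_{n-1},p_{n-1})\}$ of $\mathbb{Z}^2$ is the usual one. There is nothing to correct.
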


\begin{lemma}\label{lemmaifenjie}
If a point $ \xi_i $ with some $ i>0 $ belongs to the fundamental segment $ \Delta^{(n)}_{0} $, then $ i $ can be expanded in the form $ i=q_n+ \sum\nolimits_{s = n + 1}^{n + m} {{{\widehat k}_{s + 1}}{q_s}}  $ with some integer $ 0 \leqslant {{\widehat k}_{s + 1}} \leqslant {k_{s + 1}} $, $ n+1 \leqslant s \leqslant n+m$, $m \geqslant 1 $.
\end{lemma}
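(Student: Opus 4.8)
The plan is to treat this as a purely combinatorial fact about the orbit of $\xi_0$ and to argue by strong induction on $i$, the assertion $P(i)$ being: \emph{for every $n$, if $\xi_i\in\Delta^{(n)}_0$ with $i>0$, then $i$ admits an expansion of the displayed form}. The only inputs I would use are the ordering \eqref{jiou}, the recurrence $\Delta_n=k_{n+2}\Delta_{n+1}+\Delta_{n+2}$, the disjointness Lemma~\ref{lemmadisjoint}, and the fact that the $\xi_j$ are pairwise distinct (since $\rho(T)$ is irrational, $T$ has no periodic points).

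First I would record the renormalization decomposition that drives the induction: for every $N$,
\[
\Delta^{(N)}_0=\Delta^{(N+2)}_0\;\cup\;\bigcup_{r=0}^{k_{N+2}-1}\Delta^{(N+1)}_{q_N+rq_{N+1}},
\]
the interiors of these $k_{N+2}+1$ segments being pairwise disjoint, with overlaps only at points of the form $\xi_{q_N+rq_{N+1}}$; this is immediate from \eqref{jiou} and $\Delta_N=k_{N+2}\Delta_{N+1}+\Delta_{N+2}$ (it is the classical dynamical partition, cf.\ \cite{Khanin09Invent}). Given $i>0$ with $\xi_i\in\Delta^{(n)}_0$, since $\Delta^{(n)}_0\supset\Delta^{(n+2)}_0\supset\cdots$ shrink to $\{\xi_0\}$ while $\xi_i\ne\xi_0$, I would pick $\ell\geqslant0$ maximal with $\xi_i\in\Delta^{(n+2\ell)}_0$ and set $N=n+2\ell$, so that $\xi_i\notin\Delta^{(N+2)}_0$; telescoping $q_{m+2}=k_{m+2}q_{m+1}+q_m$ then gives $q_N=q_n+\sum_{j=1}^{\ell}k_{n+2j}\,q_{n+2j-1}$, which is itself already of the claimed shape (the coefficients of $q_{n+1},q_{n+3},\dots$ being $k_{n+2},k_{n+4},\dots$ and the intervening ones $0$). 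If $\xi_i=\xi_{q_N}$ then $i=q_N$ and we are done; otherwise, by the decomposition $\xi_i$ lies in a unique piece $\Delta^{(N+1)}_{t}$ with $t=q_N+rq_{N+1}$ and $0\leqslant r\leqslant k_{N+2}-1$ (the overlap points, being of the form $\xi_{q_N+rq_{N+1}}$, are of the required form at once).

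The key step will be to show that $a:=i-t\geqslant0$. From $\xi_i\in\Delta^{(N+1)}_t=T^t\Delta^{(N+1)}_0$ one gets $\xi_{i-t}=T^{-t}\xi_i\in\Delta^{(N+1)}_0$; if $a<0$, then with $s:=t-i\in(0,q_{N+2})$ we would have $\xi_0=T^s\xi_{-s}\in T^s\Delta^{(N+1)}_0=\Delta^{(N+1)}(\xi_s)$, while also $\xi_0\in\Delta^{(N+1)}(\xi_0)$; Lemma~\ref{lemmadisjoint} applied at level $N+1$ (iterate $s$, with $0<s<q_{N+2}$) then forces $\xi_0$ to be a common endpoint, i.e.\ $\xi_0\in\{\xi_s,\xi_{s+q_{N+1}}\}$, which is impossible as $s>0$ and the $\xi_j$ are distinct. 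Granting $a\geqslant0$: if $a=0$ then $i=t=q_N+rq_{N+1}$ is of the required form; if $a>0$, then $\xi_a\in\Delta^{(N+1)}_0$ with $0<a<i$ (as $t>0$), so the induction hypothesis $P(a)$ gives $a=q_{N+1}+\sum_{s=N+2}^{N+1+m'}\hat{k}_{s+1}q_s$ with $0\leqslant\hat{k}_{s+1}\leqslant k_{s+1}$, and then
\[
i=t+a=q_n+\sum_{j=1}^{\ell}k_{n+2j}\,q_{n+2j-1}+(r+1)q_{N+1}+\sum_{s=N+2}^{N+1+m'}\hat{k}_{s+1}q_s
\]
is exactly the asserted expansion: the coefficient of $q_{N+1}=q_{n+2\ell+1}$ equals $r+1\leqslant k_{N+2}$, those of $q_{n+2j-1}$ equal $k_{n+2j}$, the intervening lower coefficients vanish, and the higher digits are admissible by $P(a)$. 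The base $i=1$ is trivial since it forces $i=q_n$.

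I expect the one genuine obstacle to be the inequality $a\geqslant0$ — the assertion that an orbit point lying in a shifted fundamental segment $\Delta^{(N+1)}_t$ cannot be reached before time $t$; the reduction above is designed precisely so that this collapses to one clean application of Lemma~\ref{lemmadisjoint}. The other point needing care is well-foundedness: by passing first to the \emph{maximal} depth $\ell$ (which exists because $\bigcap_\ell\Delta^{(n+2\ell)}_0=\{\xi_0\}$), the recursion lowers the index from $i$ to $a<i$ rather than merely raising the level, so the strong induction on $i$ closes.
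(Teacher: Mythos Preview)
The paper does not prove this lemma; it explicitly omits the argument and refers the reader to Lemma~2 of \cite{Khanin09Invent}, so there is no in-paper proof to compare against. Your argument is correct and self-contained: the strong induction on $i$ via the renormalization decomposition
\[
\Delta^{(N)}_0=\Delta^{(N+2)}_0\cup\bigcup_{r=0}^{k_{N+2}-1}\Delta^{(N+1)}_{q_N+rq_{N+1}},
\]
together with the use of Lemma~\ref{lemmadisjoint} to rule out $a<0$, is exactly the classical combinatorial route (the Ostrowski-type expansion of return times that underlies the dynamical partition). The two points you flagged as potential obstacles --- the inequality $a\geqslant0$ and well-foundedness --- are both handled cleanly: for the former, your reduction to a single application of disjointness at level $N+1$ with iterate $0<s<q_{N+2}$ is the standard move, and for the latter, passing first to the maximal depth $\ell$ guarantees $t\geqslant q_N\geqslant 1$ so that $a<i$ strictly. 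Your proof can stand in for the omitted one without change.
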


\begin{lemma}\label{lemmalnxu}
$ l_n \geqslant \Delta_n $.
\end{lemma}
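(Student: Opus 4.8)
\emph{Proof proposal.} The plan is to recognize $l_n$ and $\Delta_n$ as the $C^0$-displacements of $T^{q_n}$ and of the rigid rotation, and then to use the elementary fact that an orientation-preserving circle homeomorphism moves \emph{some} point by exactly its rotation number. Concretely, I would fix the lift $L_T$ of $T$ introduced in the Introduction, so that $L_T^{\,q_n}$ is a lift of $T^{q_n}$ with rotation number $q_n\rho=p_n+(-1)^n\Delta_n$. The displacement $x\mapsto L_T^{\,q_n}(x)-x$ is continuous and $1$-periodic, hence has a compact interval as range, and the standard estimate that the rotation number of a lift lies between the infimum and the supremum of its displacement (proved by iterating $L_T^{\,q_n}-\mathrm{id}\ge c\Rightarrow(L_T^{\,q_n})^{m}-\mathrm{id}\ge mc$ using monotonicity, and symmetrically) shows that $q_n\rho$ is itself attained: there is $\xi^{*}\in\mathbb{T}^{1}$ with $L_T^{\,q_n}(\xi^{*})-\xi^{*}=p_n+(-1)^n\Delta_n$.

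It then remains to identify the fundamental segment at $\xi^{*}$. From $L_T^{\,q_n}(\xi^{*})=\xi^{*}+p_n+(-1)^n\Delta_n$ one gets $T^{q_n}\xi^{*}=\{\xi^{*}+(-1)^n\Delta_n\}$, so $T^{q_n}\xi^{*}$ sits at signed distance $(-1)^n\Delta_n$ from $\xi^{*}$ along $\mathbb{T}^1$; since $\Delta_n=|q_n\rho-p_n|\in(0,\tfrac12)$ for $n\ge 1$, the oriented arc $[\xi^{*},T^{q_n}\xi^{*}]$ when $n$ is even, respectively $[T^{q_n}\xi^{*},\xi^{*}]$ when $n$ is odd, is exactly the fundamental segment $\Delta^{(n)}(\xi^{*})$ in the sense of \eqref{jiou}, and has length precisely $\Delta_n$. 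Consequently
\[
l_n=\|T^{q_n}-\mathrm{id}\|_{C^0}=\max_{\xi\in\mathbb{T}^1}\bigl|\Delta^{(n)}(\xi)\bigr|\ \ge\ \bigl|\Delta^{(n)}(\xi^{*})\bigr|=\Delta_n ,
\]
which is the assertion; the value $n=0$ is either excluded by the convention $n\in\mathbb{N}^+$ or handled verbatim with $\Delta_0$ read as the distance from $\rho$ to $0$ on $\mathbb{T}^1$.

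There is no analytic difficulty here; the only point demanding care is the bookkeeping with lifts and orientations — one must work with the specific lift for which the rotation number of $T^{q_n}$ is the real number $q_n\rho$ rather than its reduction modulo $1$, and one must match the combinatorially defined segment $\Delta^{(n)}(\xi)$ with the metric arc cut out by the displacement under that lift, the inequality $\Delta_n<\tfrac12$ being precisely what resolves the ambiguity between the two complementary arcs. The same conclusion can also be viewed through the unique $T$-invariant probability measure, which the Denjoy conjugacy pushes forward to Lebesgue measure for $R_\rho$ and which therefore assigns mass $\Delta_n$ to every $\Delta^{(n)}(\xi)$; we shall not need this viewpoint.
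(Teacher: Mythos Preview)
Your argument is correct and is precisely the standard one: the paper itself omits the proof, citing Lemma~3 of Khanin--Teplinsky \cite{Khanin09Invent}, where exactly this rotation-number/displacement reasoning is used. The only step worth making fully explicit is that $q_n\rho$ is not merely sandwiched between the extrema of the continuous periodic displacement $L_T^{\,q_n}-\mathrm{id}$ but is actually \emph{attained} by it (intermediate value theorem), which is what produces your point $\xi^{*}$.
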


\begin{lemma}
For any $ 0\leqslant j-i<q_{n+1} $, there holds $ \frac{{\left| {\Delta _i^{\left( {n + m} \right)}} \right|}}{{\left| {\Delta _i^{\left( n \right)}} \right|}} \sim \frac{{\left| {\Delta _j^{\left( {n + m} \right)}} \right|}}{{\left| {\Delta _j^{\left( n \right)}} \right|}} $.
\end{lemma}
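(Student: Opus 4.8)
The plan is to recast the asserted comparison as a uniform bounded-distortion statement for the iterate $T^{j-i}$ and then exploit the multiplicativity of the ratio distortion $\mathrm{D}$ under composition. Assume $m\geqslant 1$ and $j>i$ (otherwise the statement is trivial). Put $a=\xi_i$, $b=\xi_{i+q_n}=T^{q_n}\xi_i$ and $c=\xi_{i+q_{n+m}}=T^{q_{n+m}}\xi_i$, so that $|\Delta_i^{(n)}|=|b-a|$, $|\Delta_i^{(n+m)}|=|c-a|$, and since $\xi_j=T^{j-i}\xi_i$ the endpoints of the $j$-based segments are $T^{j-i}a,T^{j-i}b,T^{j-i}c$. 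Directly from the definition of the ratio distortion,
\[
\frac{|\Delta_j^{(n+m)}|}{|\Delta_j^{(n)}|}
=\frac{|T^{j-i}c-T^{j-i}a|}{|T^{j-i}b-T^{j-i}a|}
=\bigl|\mathrm{D}(c,a,b;T^{j-i})\bigr|\cdot\frac{|\Delta_i^{(n+m)}|}{|\Delta_i^{(n)}|},
\]
so it suffices to bound $\mathrm{D}(c,a,b;T^{j-i})$ above and below by constants independent of $i,j,n,m$.

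Next I would telescope along the orbit using $\mathrm{D}(x_1,x_2,x_3;f\circ g)=\mathrm{D}(x_1,x_2,x_3;g)\cdot\mathrm{D}(g(x_1),g(x_2),g(x_3);f)$, which upon iteration gives
\[
\mathrm{D}(c,a,b;T^{j-i})=\prod_{s=0}^{j-i-1}\mathrm{D}\bigl(T^sc,T^sa,T^sb;T\bigr),
\]
and estimate each factor by Proposition \ref{lemmaD}: $\mathrm{D}(T^sc,T^sa,T^sb;T)=1+(T^sc-T^sb)\bigl(\tfrac{T''}{2T'}+\mathcal{O}(\varpi(\mathrm{diam}_s))\bigr)$, where $\mathrm{diam}_s$ is the diameter of $\{T^sa,T^sb,T^sc\}$. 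The geometric heart of the argument is the bound $\sum_{s=0}^{j-i-1}|T^sc-T^sb|\leqslant 2$. Indeed $T^sa=\xi_{i+s}$, $T^sb=\xi_{i+s+q_n}$, $T^sc=\xi_{i+s+q_{n+m}}$, so by the ordering \eqref{jiou} translated to $\xi_{i+s}$ the arc between $T^sb$ and $T^sc$ lies in $\Delta^{(n)}(\xi_{i+s})$ when $n$ and $n+m$ have the same parity and in $\Delta^{(n)}(\xi_{i+s})\cup\Delta^{(n+m)}(\xi_{i+s})$ otherwise; hence $|T^sc-T^sb|\leqslant|\Delta_{i+s}^{(n)}|+|\Delta_{i+s}^{(n+m)}|$ in all cases. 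Since $j-i<q_{n+1}\leqslant q_{n+m+1}$, Lemma \ref{lemmadisjoint} makes both families $\{\Delta_{i+s}^{(n)}\}_{0\leqslant s<j-i}$ and $\{\Delta_{i+s}^{(n+m)}\}_{0\leqslant s<j-i}$ pairwise disjoint, so each partial sum is at most $|\mathbb{T}^1|=1$. The $\varpi$-weighted contribution is handled the same way because $\varpi$ is increasing and $\mathrm{diam}_s\leqslant 2l_0$, so $\varpi(\mathrm{diam}_s)$ is uniformly bounded.

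It then remains to pass to logarithms. Because $T\in C^2$ with $T'>0$ on the compact $\mathbb{T}^1$, the quantity $\tfrac{T''}{2T'}+\mathcal{O}(\varpi(\cdot))$ is bounded by some $M$, and each factor $\mathrm{D}(T^sc,T^sa,T^sb;T)$, being a quotient of two difference quotients of $T$, lies in $[m_1/M_1,M_1/m_1]$ with $m_1=\min T'$, $M_1=\max T'$. Splitting the indices according to whether $|T^sc-T^sb|$ is below a fixed threshold (then $|\log(1+x)|\leqslant 2|x|$ applies to $x=\mathrm{D}-1$) or above it (then there are only boundedly many such $s$, by the total length bound, and each factor changes the product by a fixed bounded amount via the two-sided bound on $\mathrm{D}$), one obtains $|\log\mathrm{D}(T^sc,T^sa,T^sb;T)|\leqslant C'|T^sc-T^sb|$ for a universal $C'$. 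Summing, $|\log\mathrm{D}(c,a,b;T^{j-i})|\leqslant 2C'$, hence $\mathrm{D}(c,a,b;T^{j-i})\in[e^{-2C'},e^{2C'}]$, which is exactly the claimed comparison. (Alternatively, one may bypass the cross-ratio machinery and argue directly via the Mean Value Theorem, writing the ratio of ratios as $\prod_s T'(\theta_s)/T'(\eta_s)$ with $\theta_s,\eta_s$ lying in $\Delta_{i+s}^{(n)}\cup\Delta_{i+s}^{(n+m)}$ and using that $\log T'$ is Lipschitz; the same disjointness bound closes it.)

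The step I expect to be the main obstacle is precisely this last passage to logarithms: the individual factors $\mathrm{D}(T^sc,T^sa,T^sb;T)$ need not be close to $1$ when the underlying fundamental segment is not small — which happens for small $n$ — so one cannot naively write $\prod(1+\mathcal{O}(\cdot))=\exp(\mathcal{O}(\sum|\cdot|))$; the remedy is that $\sum_s|T^sc-T^sb|\leqslant 2$ forces only boundedly many "large" factors, each contributing at most a fixed multiplicative constant. A secondary nuisance is the parity bookkeeping needed to locate $\xi_{i+s+q_{n+m}}$ relative to $\xi_{i+s}$ and $\xi_{i+s+q_n}$, which mildly complicates the length estimate but has no effect on the final bound.
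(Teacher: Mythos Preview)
Your argument is correct, but it is worth noting that the paper does not actually supply a proof here: it cites this as Lemma~4 of \cite{Khanin09Invent} and explicitly remarks that the omitted proofs of this block of lemmas ``do not involve irrationality or modulus of continuity''. That remark is a hint about the intended method.

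Your main line---recasting the ratio of ratios as $\mathrm{D}(c,a,b;T^{j-i})$, telescoping, and invoking Proposition~\ref{lemmaD} at each step---works, and your handling of the log-passage (finitely many ``large'' factors forced by the total-length bound $\sum_s|T^sc-T^sb|\leqslant 2$) is sound. But this route imports the $C_{2,\varpi}$ cross-ratio estimate into a lemma that in the reference is a purely $C^{1+BV}$ bounded-distortion fact. The classical proof is exactly the alternative you sketch parenthetically at the end: apply the Mean Value Theorem to write
\[
\frac{|\Delta_j^{(n+m)}|/|\Delta_i^{(n+m)}|}{|\Delta_j^{(n)}|/|\Delta_i^{(n)}|}
=\frac{(T^{j-i})'(\theta_1)}{(T^{j-i})'(\theta_2)},\qquad \theta_1\in\Delta_i^{(n+m)},\ \theta_2\in\Delta_i^{(n)},
\]
telescope $\log(T^{j-i})'$, use that $\log T'$ is Lipschitz (or of bounded variation), and bound $\sum_{s}|T^s\theta_1-T^s\theta_2|\leqslant\sum_s\bigl(|\Delta_{i+s}^{(n)}|+|\Delta_{i+s}^{(n+m)}|\bigr)\leqslant 2$ via Lemma~\ref{lemmadisjoint}. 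This is shorter, avoids the ``large factor'' splitting entirely (each summand in $\log$ is already $\mathcal{O}(|T^s\theta_1-T^s\theta_2|)$ with no threshold needed), and makes transparent why only $C^{1+BV}$ regularity is used. Your cross-ratio route buys nothing extra here, though it is a perfectly good exercise in the machinery.
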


\begin{lemma}\label{deltal}
$ \frac{{\left| {\Delta _0^{\left( {n + m} \right)}} \right|}}{{\left| {\Delta _0^{\left( n \right)}} \right|}} = \mathcal{O}\left( {\frac{{{l_{n + m}}}}{{{l_n}}}} \right) $.
\end{lemma}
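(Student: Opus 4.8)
The plan is to control the numerator of $|\Delta_0^{(n+m)}|/|\Delta_0^{(n)}|$ trivially and to move the whole quotient to a more convenient combinatorial location so as to control the denominator. For the numerator, $|\Delta_0^{(n+m)}|=|\Delta^{(n+m)}(\xi_0)|\le l_{n+m}$ straight from the definition of $l_{n+m}$. The subtle point is that one cannot simply bound $|\Delta_0^{(n)}|$ from below by a multiple of $l_n$, since the fundamental segment based at the marked point may be far smaller than the largest generation-$n$ segment; instead I would invoke the (unlabelled) lemma immediately preceding this one, which gives $\frac{|\Delta_i^{(n+m)}|}{|\Delta_i^{(n)}|}\sim\frac{|\Delta_j^{(n+m)}|}{|\Delta_j^{(n)}|}$ whenever $0\le j-i<q_{n+1}$. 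Applying it with $(i,j)=(0,j)$ for $0\le j<q_{n+1}$, and with $(i,j)=(j,0)$ for $-q_{n+1}<j\le 0$, shows that $\frac{|\Delta_0^{(n+m)}|}{|\Delta_0^{(n)}|}$ is comparable, with constants independent of $n$ and $m$, to $\frac{|\Delta_j^{(n+m)}|}{|\Delta_j^{(n)}|}$ for every index $j$ in the window $|j|<q_{n+1}$.

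The second step is to pick $j=j^{\ast}$ in that window for which $|\Delta_{j^{\ast}}^{(n)}|$ is (up to a universal constant) as large as possible, and for this I would prove $\max_{|j|<q_{n+1}}|\Delta_j^{(n)}|\sim l_n$. The inequality ``$\lesssim l_n$'' is immediate since $\Delta_j^{(n)}=\Delta^{(n)}(\xi_j)$. For the reverse, recall that the segments $\Delta_j^{(n)}$, $0\le j<q_{n+1}$, are pairwise disjoint (Lemma \ref{lemmadisjoint}) and, together with the segments $\Delta_j^{(n+1)}$, $0\le j<q_n$, form a partition of $\mathbb{T}^1$; since $T^{q_n}$ carries each $\Delta_j^{(n)}$ onto the adjacent segment $\Delta_{j+q_n}^{(n)}$, for any $x\in\mathbb{T}^1$ the segment $\Delta^{(n)}(x)=[x,T^{q_n}x]$ lies inside a union of boundedly many atoms of this partition, each of which --- after a bounded combinatorial shift, governed by the combinatorics of Lemma \ref{lemmaifenjie} together with the bounded-distortion estimates available under $C^2$ regularity --- is comparable in length to an atom $\Delta_j^{(n)}$ with $|j|<q_{n+1}$. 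Summing lengths then yields $l_n=\sup_x|\Delta^{(n)}(x)|\lesssim\max_{|j|<q_{n+1}}|\Delta_j^{(n)}|$.

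With $j^{\ast}$ chosen so that $|j^{\ast}|<q_{n+1}$ and $|\Delta_{j^{\ast}}^{(n)}|\sim l_n$, the two steps combine to give
\[
\frac{|\Delta_0^{(n+m)}|}{|\Delta_0^{(n)}|}\ \sim\ \frac{|\Delta_{j^{\ast}}^{(n+m)}|}{|\Delta_{j^{\ast}}^{(n)}|}\ \sim\ \frac{|\Delta_{j^{\ast}}^{(n+m)}|}{l_n}\ \le\ \frac{l_{n+m}}{l_n},
\]
so that $\frac{|\Delta_0^{(n+m)}|}{|\Delta_0^{(n)}|}=\mathcal{O}(l_{n+m}/l_n)$, as claimed. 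The main obstacle is the combinatorial content of the second step: bringing an arbitrary generation-$n$ segment back into the period window $|j|<q_{n+1}$ may require the long iterate $T^{\pm q_{n+1}}$, whose distortion on long orbit segments is not bounded a priori, so one must argue locally, exploiting that all segments in play are of generation $n$ or $n+1$ and sit within one combinatorial period of a window atom, and invoking the classical $C^2$ distortion control (equivalently, the cross-ratio estimates of Propositions \ref{lemmaD}--\ref{lemmaDist}). This is one of the classical lemmas quoted from \cite{Khanin09Invent}, which one may alternatively invoke directly.
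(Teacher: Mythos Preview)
The paper does not prove this lemma at all: it is listed among the ``basic but well-known lemmas'' whose proofs ``can be found in \cite{Khanin09Invent} \ldots\ which we omit here for the sake of brevity.'' So there is no in-paper argument to compare against; you are effectively reconstructing the proof from \cite{Khanin09Invent}, and your outline --- transfer the ratio via the preceding comparability lemma to an orbit index $j^\ast$ where $|\Delta_{j^\ast}^{(n)}|\sim l_n$, then bound the numerator by $l_{n+m}$ --- is the standard route and is correct.

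One comment on your second step. The justification you give (covering $\Delta^{(n)}(x)$ by boundedly many partition atoms and then invoking Lemma~\ref{lemmaifenjie} plus ``bounded distortion'') is a little circuitous. A cleaner, purely combinatorial argument is available and explains why you need the two-sided window $|j|<q_{n+1}$: for any $\eta$, if $\eta$ lies in an atom $\Delta_j^{(n)}$ of $\mathcal P_{n+1}$ then $\Delta^{(n)}(\eta)\subset\Delta_j^{(n)}\cup\Delta_{j+q_n}^{(n)}$; if instead $\eta\in\Delta_j^{(n+1)}$ then, since $\Delta_j^{(n+1)}\subset\Delta_{j-q_n}^{(n)}$, one gets $\Delta^{(n)}(\eta)\subset\Delta_{j-q_n}^{(n)}\cup\Delta_j^{(n)}$. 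In either case $|\Delta^{(n)}(\eta)|\le 2\max_{|i|<q_{n+1}}|\Delta_i^{(n)}|$, which is exactly the inequality you need, without any appeal to distortion estimates.
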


\begin{lemma}\label{lzhishu}
$ \frac{{{l_{n + m}}}}{{{l_n}}} = \mathcal{O}\left( {{\lambda ^m}} \right) $, where the constant $ \lambda \in (\frac{1}{2},1)$ is defined in the classical Denjoy theory below, see Statement (B).
\end{lemma}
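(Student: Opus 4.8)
The plan is to recover the estimate from the classical Denjoy machinery under $C^2$ smoothness, through two inputs. The first is the \emph{bounded geometry of the dynamical partitions}: the $C^2$ Denjoy distortion estimate (which itself rests on the disjointness Lemma~\ref{lemmadisjoint}) yields a constant $M=M(T)<\infty$ such that, uniformly in $\ell$ and in the base point, any generation-$(\ell+1)$ fundamental segment $\Delta^{(\ell+1)}(\cdot)$ has $T$-length at most $M$ times that of an \emph{adjacent} generation-$\ell$ fundamental segment --- the analogous bound for the rigid rotation being the elementary $\Delta_{\ell+1}<\Delta_\ell$, and the same holding for any two adjacent fundamental segments whose generations differ by one, the higher-generation one being the shorter up to the factor $M$. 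The second is the combinatorial/self-refining structure already recorded in Lemmas~\ref{lemmadisjoint}--\ref{deltal}: each generation-$\ell$ fundamental segment $\Delta^{(\ell)}(\xi)$ is the disjoint union of its $k_{\ell+2}\ge1$ ``children'' of generation $\ell+1$ together with its unique child of generation $\ell+2$, consecutive children being adjacent, and $\Delta^{(\ell+2)}(\xi)\subset\Delta^{(\ell)}(\xi)$ for all $\xi$ and $\ell$.

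From these I would first prove a two-generation contraction: there is $\lambda_0\in[1/2,1)$ with $l_{\ell+2}\le\lambda_0\,l_\ell$ for every $\ell$. Fixing $\xi$, the generation-$(\ell+2)$ child $\Delta^{(\ell+2)}(\xi)$ is adjacent to a generation-$(\ell+1)$ sibling, so by bounded geometry its length is at most $M$ times that sibling's length, a fortiori at most $M$ times the combined length of all the generation-$(\ell+1)$ children; since those children together with $\Delta^{(\ell+2)}(\xi)$ fill $\Delta^{(\ell)}(\xi)$, this forces $|\Delta^{(\ell+2)}(\xi)|\le\frac{M}{M+1}|\Delta^{(\ell)}(\xi)|$. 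Taking $\lambda_0:=\max\{M/(M+1),\,1/2\}\in[1/2,1)$ and the maximum over $\xi$ gives $l_{\ell+2}\le\lambda_0 l_\ell$. (The cap $1/2$ is the obstruction coming from the rigid rotation itself in the worst case $k_{\ell+2}=1$, while the distortion only pushes the bound up, keeping it strictly below $1$ --- this is the source of the range $(1/2,1)$; a two-generation contraction of this form is essentially the content of Statement~(B).)

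Next I would iterate. The pointwise contraction chains to $|\Delta^{(n+2j)}(\xi)|\le\lambda_0^{\,j}|\Delta^{(n)}(\xi)|$ and $|\Delta^{(n+2j+1)}(\xi)|\le\lambda_0^{\,j}|\Delta^{(n+1)}(\xi)|$, whence $l_{n+2j}\le\lambda_0^{\,j}l_n$ and $l_{n+2j+1}\le\lambda_0^{\,j}l_{n+1}$ for all $j\ge0$. To absorb the lone generation $l_{n+1}$ against $l_n$, note that bounded geometry also gives $l_{n+1}\le M\,l_n$: every generation-$(n+1)$ fundamental segment is adjacent to a generation-$n$ one, whose length is at most $l_n$. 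Hence $l_{n+m}/l_n$ is $\le\lambda_0^{\,m/2}$ when $m$ is even and $\le M\lambda_0^{\,(m-1)/2}$ when $m$ is odd; putting $\lambda:=\sqrt{\lambda_0}\in(1/2,1)$ (legitimate since $\sqrt{\lambda_0}\ge 1/\sqrt2>1/2$) and folding the bounded factors $1$ and $M\lambda^{-1}$ into the $\mathcal{O}(\cdot)$ yields $l_{n+m}/l_n=\mathcal{O}(\lambda^{\,m})$, uniformly in $n$ and $m$, as required.

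The genuinely delicate step is the bounded-geometry input --- the \emph{uniformity in $\ell$} of the constant $M$; a priori the children of a fundamental segment could be wildly incomparable, and it is precisely $C^2$-regularity (via the Denjoy distortion estimate and the disjointness Lemma~\ref{lemmadisjoint}) that rules this out and pins the constants. As the remark preceding the lemma indicates, the whole argument is insensitive to the finer arithmetic of $\rho$ beyond the continued-fraction combinatorics and uses no modulus of continuity, so $\varpi$ never enters.
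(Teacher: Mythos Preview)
Your proposal is correct and follows exactly the classical two-step route that the paper defers to (it omits the proof, citing Lemma~7 of \cite{Khanin09Invent} and Statement~(B) from Sina\u{\i}'s book): a uniform two-generation contraction $|\Delta^{(\ell+2)}(\xi)|\le\lambda_0\,|\Delta^{(\ell)}(\xi)|$ extracted from Denjoy's bounded distortion, then iteration with a square root to handle odd $m$. There is no separate proof in the paper to compare against, and your identification $\lambda=\sqrt{\lambda_0}$ with $\lambda_0=M/(M+1)$ even recovers the paper's explicit constant $\lambda=1/\sqrt{1+e^{-\mathcal{C}}}$ once one takes $M=e^{\mathcal{C}}$.
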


\subsection{Classical Denjoy's theory}\label{Subdenjoy}
Here we go back to the classical Denjoy theory \cite{A.Denjoy} which will be used later, see details from Lecture 10 (Homeomorphisms and Diffeomorphisms of the Circle) in the book \cite{Sinaibook}.

For any orientation-preserving circle diffeomorphism $ T \in C^{1+{\rm BV}} (\overline{\mathbb{T}^1})$ ($ {\rm BV} $ stands for bounded variation) with any irrational rotation number $ \rho $ (i.e., need not satisfy any nonresonant conditions such as the Diophantine's type), the following estimates hold:

\begin{enumerate}[label=(\Alph*)] 
\item \label{item:a} $ \log(T^{q_n})'(\xi_0) =\mathcal{O}(1)$;

\item \label{item:b} There exists $ \lambda \in (\frac{1}{2},1) $ such that $ \frac{{\left| {\Delta _0^{\left( {n + m} \right)}} \right|}}{{\left| {\Delta _0^{\left( n \right)}} \right|}} = \mathcal{O}\left( {{\lambda ^m}} \right) $. More precisely, we can write $ \lambda $ explicitly:
\[\lambda  = \frac{1}{{\sqrt {1 + {e^{ - \mathcal{C}}}} }},\quad\mathcal{C} = \int_0^1 {\left| {\frac{d}{{dx}}\log T'\left( x \right)} \right|dx};\]

\item \label{item:c} There exists a \textit{homeomorphism} $ \phi $ such that conjugates $ T $ to $ R_{\rho} $, i.e., $ \phi  \circ T \circ {\phi ^{ - 1}} = {R_\rho } $. This is called the topological (or continuous) equivalence of $ T $ and $ R_\rho $.
\end{enumerate}

\subsection{Denjoy-type inequality via modulus of continuity}\label{DenjoyDenjoy}

To establish the generalized Denjoy-type inequality in the case of modulus of continuity beyond the H\"older type, we need some basic lemmas based on the cross-ratio distortion estimates we introduced previously.

\begin{lemma}
For any fixed $ \xi_0 $, define functions:
\begin{equation}\label{MnKndy}
	\left\{ \begin{aligned}
		&{M_n}\left( \xi  \right) := \mathrm{D}\left( {{\xi _0},\xi ,{\xi _{{q_{n - 1}}}};{T^{{q_n}}}} \right),\ &\xi  \in \Delta _0^{\left( {n - 1} \right)} \hfill, \\
		&{K_n}\left( \xi  \right) := \mathrm{D}\left( {{\xi _0},\xi ,{\xi _{{q_n}}};{T^{{q_{n - 1}}}}} \right),\ &\xi  \in \Delta _0^{\left( {n - 2} \right)} \hfill. \\
	\end{aligned}  \right.
\end{equation}
Then we have the following equations:
\begin{align}
	\label{eq1}{M_n}\left( {{\xi _0}} \right) \cdot {M_n}\left( {{\xi _{{q_{n - 1}}}}} \right) &= {K_n}\left( {{\xi _0}} \right) \cdot {M_n}\left( {{\xi _{{q_n}}}} \right), \\
	\label{eq2}{K_{n + 1}}\left( {{\xi _{{q_{n - 1}}}}} \right) - 1 &= \frac{{\left| {\Delta _0^{\left( {n + 1} \right)}} \right|}}{{\left| {\Delta _0^{\left( {n - 1} \right)}} \right|}}\left( {{M_n}\left( {{\xi _{{q_{n + 1}}}}} \right) - 1} \right), \\
	\label{eq3}\frac{{{{\left( {{T^{{q_n}}}} \right)}^\prime }\left( {{\xi _0}} \right)}}{{{M_n}\left( {{\xi _0}} \right)}} - 1 &= \frac{{\left| {\Delta _0^{\left( n \right)}} \right|}}{{\left| {\Delta _0^{\left( {n - 1} \right)}} \right|}}\left( {1 - \frac{{{{\left( {{T^{{q_{n - 1}}}}} \right)}^\prime }\left( {{\xi _0}} \right)}}{{{K_n}\left( {{\xi _0}} \right)}}} \right).
\end{align}
\end{lemma}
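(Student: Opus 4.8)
\emph{Proof plan.} Each of \eqref{eq1}--\eqref{eq3} is an exact algebraic identity, and the plan is to verify them by expanding the ratio distortions from the definition. The tools are: the specialisations of $\mathrm{D}$ at coincident arguments, namely $\mathrm{D}(x,x,z;f)=f'(x)\,\dfrac{x-z}{f(x)-f(z)}$ and $\mathrm{D}(x,z,z;f)=\dfrac{f(x)-f(z)}{x-z}\cdot\dfrac{1}{f'(z)}$; the orbit relations $T^{q_m}(\xi_i)=\xi_{i+q_m}$, the identity $|\Delta_0^{(k)}|=|\xi_0-\xi_{q_k}|$, and the ordering \eqref{jiou} of the dynamical convergents, which fixes all signs; the commutation $T^{q_n}\circ T^{q_{n-1}}=T^{q_{n-1}}\circ T^{q_n}$ together with the chain rule; and the composition (cocycle) formula for $\mathrm{D}$ stated above. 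Throughout, the computations are to be carried out on a single lift of a short arc about $\xi_0$ (for instance $\Delta_0^{(n-2)}\cup\Delta_0^{(n-1)}$), on which $T^{q_n}$ and $T^{q_{n-1}}$ are monotone with well-defined local branches, so that each symbol $T^{q_m}\xi_i$ denotes the correct representative $\xi_{i+q_m}$ and each $|\Delta_0^{(k)}|$ is recovered with the right sign.

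\emph{The identities \eqref{eq2} and \eqref{eq3}.} I would treat \eqref{eq2} first, since it is purely algebraic: $K_{n+1}(\xi_{q_{n-1}})=\mathrm{D}(\xi_0,\xi_{q_{n-1}},\xi_{q_{n+1}};T^{q_n})$ and $M_n(\xi_{q_{n+1}})=\mathrm{D}(\xi_0,\xi_{q_{n+1}},\xi_{q_{n-1}};T^{q_n})$ involve the \emph{same} map and the \emph{same} three points, only transposed, so \eqref{eq2} reduces to the elementary relation
\[
\mathrm{D}(a,b,c;g)-1=\frac{a-c}{a-b}\,\big(\mathrm{D}(a,c,b;g)-1\big),
\]
which follows by clearing denominators and checking that the two numerators are negatives of one another; the prefactor is $|\Delta_0^{(n+1)}|/|\Delta_0^{(n-1)}|$ because, by \eqref{jiou}, $\xi_{q_{n-1}}$ and $\xi_{q_{n+1}}$ lie on the same side of $\xi_0$. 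For \eqref{eq3}, expanding the coincident-argument distortions gives $(T^{q_n})'(\xi_0)/M_n(\xi_0)=\big(T^{q_n}\xi_0-T^{q_n}\xi_{q_{n-1}}\big)/\big(\xi_0-\xi_{q_{n-1}}\big)$ and $(T^{q_{n-1}})'(\xi_0)/K_n(\xi_0)=\big(T^{q_{n-1}}\xi_0-T^{q_{n-1}}\xi_{q_n}\big)/\big(\xi_0-\xi_{q_n}\big)$; substituting $T^{q_n}\xi_0=\xi_{q_n}$, $T^{q_{n-1}}\xi_0=\xi_{q_{n-1}}$ and, crucially, $T^{q_n}\xi_{q_{n-1}}=T^{q_{n-1}}\xi_{q_n}=\xi_{q_n+q_{n-1}}$ (commutation), a one-line rearrangement shows that the left-hand side of \eqref{eq3} equals $\dfrac{\xi_{q_n}-\xi_0}{\xi_0-\xi_{q_{n-1}}}\big(1-(T^{q_{n-1}})'(\xi_0)/K_n(\xi_0)\big)$, and \eqref{jiou} (consecutive convergents lie on opposite sides of $\xi_0$) makes the prefactor $|\Delta_0^{(n)}|/|\Delta_0^{(n-1)}|$.

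\emph{The identity \eqref{eq1}, and the main obstacle.} For \eqref{eq1}, I would expand the left-hand product at coincident arguments to obtain $M_n(\xi_0)M_n(\xi_{q_{n-1}})=(T^{q_n})'(\xi_0)/(T^{q_n})'(\xi_{q_{n-1}})$, and then use the two factorisations of $T^{q_n+q_{n-1}}$ and the chain rule --- which give $(T^{q_n})'(\xi_{q_{n-1}})(T^{q_{n-1}})'(\xi_0)=(T^{q_{n-1}})'(\xi_{q_n})(T^{q_n})'(\xi_0)$ --- to rewrite this ratio as $(T^{q_{n-1}})'(\xi_0)/(T^{q_{n-1}})'(\xi_{q_n})$. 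On the right-hand side I would expand $K_n(\xi_0)$ at coincident arguments and $M_n(\xi_{q_n})$ from the definition, feed in the orbit relations and the composition formula for $\mathrm{D}$ under $T^{q_n}\circ T^{q_{n-1}}=T^{q_{n-1}}\circ T^{q_n}$, and verify term by term that the resulting expression collapses to the same ratio. I expect the main obstacle to be organisational rather than conceptual: one must fix and maintain consistent lifts and orientations so that every $T^{q_m}\xi_i$ is the geometrically correct point and every length $|\Delta_0^{(k)}|$ carries the right sign --- this is exactly the role of \eqref{jiou} --- after which \eqref{eq1}, like the other two identities, reduces to cancellation of difference quotients.
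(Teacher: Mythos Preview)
Your plan for \eqref{eq2} and \eqref{eq3} is correct and is exactly the route the paper takes: write out the six values of $M_n$ and $K_n$ at the relevant points, feed in the orbit relations, and cancel. Your transposition identity $\mathrm{D}(a,b,c;g)-1=\tfrac{a-c}{a-b}\bigl(\mathrm{D}(a,c,b;g)-1\bigr)$ is a nice way to package \eqref{eq2}; the paper instead writes both $K_{n+1}(\xi_{q_{n-1}})$ and $M_n(\xi_{q_{n+1}})$ explicitly in terms of $|\Delta_{q_n}^{(n-1)}|$, $|\Delta_{q_n}^{(n+1)}|$, $|\Delta_0^{(n-1)}|$, $|\Delta_0^{(n+1)}|$ and checks directly. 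For \eqref{eq3} the paper records, in addition to the commutation identity you use, the length relation $|\Delta_0^{(n-1)}|+|\Delta_0^{(n)}|=|\Delta_{q_n}^{(n-1)}|+|\Delta_{q_{n-1}}^{(n)}|$; your ``one-line rearrangement'' is equivalent to this, so there is no gap.

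There is, however, a genuine obstacle in your plan for \eqref{eq1}, and it is not of your making: the right-hand side as printed contains a typo. The paper's own proof computes $K_n(\xi_{q_n})$ (not $M_n(\xi_{q_n})$), and in the subsequent Theorem the authors set $m_n=\sqrt{K_n(\xi_0)\,K_n(\xi_{q_n})}$, so the intended identity is
\[
M_n(\xi_0)\,M_n(\xi_{q_{n-1}})=K_n(\xi_0)\,K_n(\xi_{q_n}).
\]
With this correction your argument goes through in one line: by the coincident-argument formulas, the left side is $(T^{q_n})'(\xi_0)/(T^{q_n})'(\xi_{q_{n-1}})$ and the right side is $(T^{q_{n-1}})'(\xi_0)/(T^{q_{n-1}})'(\xi_{q_n})$, and these agree by the chain-rule/commutation identity you already wrote down. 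If instead you try to expand $M_n(\xi_{q_n})=\mathrm{D}(\xi_0,\xi_{q_n},\xi_{q_{n-1}};T^{q_n})$ as the statement is printed, you will pick up the extraneous point $\xi_{2q_n}$ and the expression will not collapse; the composition formula for $\mathrm{D}$ will not rescue this, so drop that part of the plan and use $K_n(\xi_{q_n})$.
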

\begin{proof}
Through the definitions in \eqref{MnKndy} we arrive at the following expressions:
\begin{align*}
	{M_n}\left( {{\xi _0}} \right) &= \mathrm{D}\left( {{\xi _0},{\xi _0},{\xi _{{q_{n - 1}}}};{T^{{q_n}}}} \right) = ({T^{{q_n}}})'\left( {{\xi _0}} \right):\frac{{\left| {\Delta _{{q_n}}^{\left( {n - 1} \right)}} \right|}}{{\left| {\Delta _0^{\left( {n - 1} \right)}} \right|}},\\
	{M_n}\left( {{\xi _{{q_{n - 1}}}}} \right) &= \mathrm{D}\left( {{\xi _0},{\xi _{{q_{n - 1}}}},{\xi _{{q_{n - 1}}}};{T^{{q_n}}}} \right) = \frac{{\left| {\Delta _{{q_n}}^{\left( {n - 1} \right)}} \right|}}{{\left| {\Delta _0^{\left( {n - 1} \right)}} \right|}}:({T^{{q_n}}})'\left( {{\xi _{{q_{n - 1}}}}} \right),\\
	{M_n}\left( {{\xi _{{q_{n + 1}}}}} \right) &= \mathrm{D}\left( {{\xi _0},{\xi _{{q_{n + 1}}}},{\xi _{{q_{n - 1}}}};{T^{{q_n}}}} \right) = \frac{{\left| {\Delta _{{q_n}}^{\left( {n + 1} \right)}} \right|}}{{\left| {\Delta _0^{\left( {n + 1} \right)}} \right|}}:\frac{{\left| {\Delta _{{q_n}}^{\left( {n - 1} \right)}} \right| - \left| {\Delta _{{q_n}}^{\left( {n + 1} \right)}} \right|}}{{\left| {\Delta _0^{\left( {n - 1} \right)}} \right| - \left| {\Delta _0^{\left( {n + 1} \right)}} \right|}},\\
	{K_n}\left( {{\xi _0}} \right) &= \mathrm{D}\left( {{\xi _0},{\xi _0},{\xi _{{q_n}}};{T^{{q_{n - 1}}}}} \right) = ({T^{{q_{n - 1}}}})'\left( {{\xi _0}} \right):\frac{{\left| {\Delta _{{q_{n - 1}}}^{\left( n \right)}} \right|}}{{\left| {\Delta _0^{\left( n \right)}} \right|}},\\
	{K_n}\left( {{\xi _{{q_n}}}} \right) &= \mathrm{D}\left( {{\xi _0},{\xi _{{q_n}}},{\xi _{{q_n}}};{T^{{q_{n - 1}}}}} \right) = \frac{{\left| {\Delta _{{q_{n - 1}}}^{\left( n \right)}} \right|}}{{\left| {\Delta _0^{\left( n \right)}} \right|}}:({T^{{q_{n - 1}}}})'\left( {{\xi _{{q_n}}}} \right),\\
	{K_{n + 1}}\left( {{\xi _{{q_{n - 1}}}}} \right) &= \mathrm{D}\left( {{\xi _0},{\xi _{{q_{n - 1}}}},{\xi _{{q_{n + 1}}}};{T^{{q_n}}}} \right) = \frac{{\left| {\Delta _{{q_n}}^{\left( {n - 1} \right)}} \right|}}{{\left| {\Delta _0^{\left( {n - 1} \right)}} \right|}}:\frac{{\left| {\Delta _{{q_n}}^{\left( {n - 1} \right)}} \right| - \left| {\Delta _{{q_n}}^{\left( {n + 1} \right)}} \right|}}{{\left| {\Delta _0^{\left( {n - 1} \right)}} \right| - \left| {\Delta _0^{\left( {n + 1} \right)}} \right|}}.
\end{align*}
Further, one notices that
\[({T^{{q_{n - 1}}}})'\left( {{\xi _{{q_n}}}} \right) \cdot ({T^{{q_n}}})'\left( {{\xi _0}} \right) = ({T^{{q_n}}})'\left( {{\xi _{{q_{n - 1}}}}} \right) \cdot ({T^{{q_{n - 1}}}})'\left( {{\xi _0}} \right),\]
and
\[\left| {\Delta _0^{\left( {n - 1} \right)}} \right| + \left| {\Delta _0^{\left( n \right)}} \right| = \left| {\Delta _{{q_n}}^{\left( {n - 1} \right)}} \right| + \left| {\Delta _{{q_{n - 1}}}^{\left( n \right)}} \right|,\]
then the conclusions \eqref{eq1}, \eqref{eq2} and \eqref{eq3} can be directly derived.
\end{proof}

\begin{lemma}\label{logdistjie}
\begin{equation}\notag
	\left\{ \begin{aligned}
		&\log \mathrm{Dist}\left( {{\xi _0},\xi ,{\xi _{{q_{n - 1}}}},\eta :{T^{{q_n}}}} \right) = \mathcal{O}\left( {\varpi \left( {{l_{n - 1}}} \right)} \right),&\xi ,\eta  \in \Delta _0^{\left( {n - 1} \right)} \hfill, \\
		&\log \mathrm{Dist}\left( {{\xi _0},\xi ,{\xi _{{q_n}}},\eta :{T^{{q_{n - 1}}}}} \right) = \mathcal{O}\left( {\varpi \left( {{l_n}} \right)} \right),&\xi ,\eta  \in \Delta _0^{\left( {n - 2} \right)} \hfill. \\
	\end{aligned}  \right.
\end{equation}
\end{lemma}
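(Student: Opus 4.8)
The plan is to apply the cross-ratio distortion estimate of Proposition \ref{lemmaDist} to the composition $T^{q_n}$ (resp.\ $T^{q_{n-1}}$) and then, since $T^{q_n}$ itself is only $C^1$ and not a priori $C_{2,\varpi}$, decompose it into the $q_n$ individual iterates of $T$, each of which \emph{is} $C_{2,\varpi}$ on $\mathbb{T}^1$ by hypothesis. First I would write, using the multiplicative chain rule for cross-ratio distortion recorded just before Proposition \ref{lemmaD},
\[
\log \mathrm{Dist}\left({\xi_0},\xi,{\xi_{q_{n-1}}},\eta; T^{q_n}\right)
= \sum_{i=0}^{q_n-1} \log \mathrm{Dist}\left(T^i{\xi_0}, T^i\xi, T^i{\xi_{q_{n-1}}}, T^i\eta; T\right).
\]
For each $i$ the four points $T^i{\xi_0}, T^i\xi, T^i{\xi_{q_{n-1}}}, T^i\eta$ all lie in $\Delta_i^{(n-1)} = T^i\Delta_0^{(n-1)}$, a segment of length at most $l_{n-1}$; by Proposition \ref{lemmaDist} applied to $f=T$ on a fixed interval $[A,B]\supset\mathbb{T}^1$ containing this segment, each summand is $(\text{something of size }\le l_{n-1})\cdot\mathcal{O}(\varpi(l_{n-1}))$, hence $\mathcal{O}\!\left(|\Delta_i^{(n-1)}|\,\varpi(l_{n-1})\right)$.

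The key point is then that summing the lengths $|\Delta_i^{(n-1)}|$ over $0 \leqslant i < q_n$ gives a quantity bounded independently of $n$: indeed, by Lemma \ref{lemmadisjoint} the segments $\Delta_i^{(n-1)}$ for $0\leqslant i<q_n\leqslant q_{n}$ are pairwise disjoint (here one uses $q_n \leqslant q_n$, so the disjointness range $0\leqslant i<q_n$ in Lemma \ref{lemmadisjoint} with index $n-1$ covers all our $i$), so
\[
\sum_{i=0}^{q_n-1} \left|\Delta_i^{(n-1)}\right| \leqslant \left|\mathbb{T}^1\right| = 1.
\]
Therefore $\sum_{i=0}^{q_n-1}\mathcal{O}\!\left(|\Delta_i^{(n-1)}|\,\varpi(l_{n-1})\right) = \mathcal{O}(\varpi(l_{n-1}))$, which is the first claimed estimate. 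The second estimate follows the same way: decompose $T^{q_{n-1}}$ into $q_{n-1}$ copies of $T$, note that the images $T^i\xi, T^i\eta, T^i\xi_0, T^i\xi_{q_n}$ all lie in $\Delta_i^{(n-2)}$ for $0\leqslant i<q_{n-1}$, invoke Lemma \ref{lemmadisjoint} with index $n-2$ (valid since $q_{n-1}\leqslant q_{n-1}$) for disjointness, and sum; here one should also use that the relevant segments have length $\le l_n$ near the points involved, or more simply bound $\varpi(|\Delta_i^{(n-2)}|)$ using monotonicity of $\varpi$ together with the estimate $|\Delta_i^{(n-2)}|=\mathcal{O}(l_{n-2})$ and a reduction to $l_n$ via Lemma \ref{lzhishu} — one must be slightly careful to extract exactly $\varpi(l_n)$ rather than $\varpi(l_{n-2})$, which is where the argument needs the most attention.

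The main obstacle I anticipate is precisely this bookkeeping of \emph{which} small scale controls the error: the naive decomposition yields $\varpi(l_{n-1})$ or $\varpi(l_{n-2})$, but the sharp statement wants $\varpi(l_n)$ in the second case. Resolving it requires observing that, although the orbit points are spread over a segment of size $\sim l_{n-2}$, the relevant \emph{distances entering the cross-ratio} (the factor $x_1-x_3$ in Proposition \ref{lemmaDist}) are governed by the finer scale — the points $\xi, \eta$ sit inside $\Delta_0^{(n-1)}$ or $\Delta_0^{(n)}$, not merely $\Delta_0^{(n-2)}$ — so one gains an extra factor comparable to $l_n/l_{n-2} = \mathcal{O}(\lambda^2)$ from Lemma \ref{lzhishu}, and the subadditivity property (2) of Definition \ref{definition1} converts this into the replacement of $\varpi(l_{n-2})$ by $\mathcal{O}(\varpi(l_n))$ after summation. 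Once this scale-matching is handled, the rest is the routine disjointness-plus-telescoping argument above.
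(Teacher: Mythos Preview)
Your decomposition-and-sum strategy is exactly what the paper intends (it simply cites Lemma~6 of \cite{Khanin09Invent}), and your treatment of the first estimate is correct: the factor $x_1-x_3$ under iteration is $|T^i\xi_0-T^i\xi_{q_{n-1}}|=|\Delta_i^{(n-1)}|$, the diameter is bounded by $l_{n-1}$, and disjointness of the $\Delta_i^{(n-1)}$ for $0\le i<q_n$ finishes the sum.

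For the second estimate you correctly spot the subtlety and the cure, but your diagnosis of \emph{where} the small factor comes from is off. You write that ``the points $\xi,\eta$ sit inside $\Delta_0^{(n-1)}$ or $\Delta_0^{(n)}$''; in fact the hypothesis is only $\xi,\eta\in\Delta_0^{(n-2)}$, so the diameter of the four-point set really can be as large as $|\Delta_i^{(n-2)}|$. The small factor comes instead from the \emph{other} two arguments: $x_1=\xi_0$ and $x_3=\xi_{q_n}$ are the endpoints of $\Delta_0^{(n)}$, so $|T^ix_1-T^ix_3|=|\Delta_i^{(n)}|$. With this correction your mechanism works cleanly: each summand is $\mathcal{O}\bigl(|\Delta_i^{(n)}|\,\varpi(|\Delta_i^{(n-2)}|)\bigr)$, property~(3) of Definition~\ref{definition1} gives
\[
|\Delta_i^{(n)}|\,\varpi\bigl(|\Delta_i^{(n-2)}|\bigr)\;\le\;C\,|\Delta_i^{(n-2)}|\,\varpi\bigl(|\Delta_i^{(n)}|\bigr)\;\le\;C\,|\Delta_i^{(n-2)}|\,\varpi(l_n),
\]
and then disjointness of the $\Delta_i^{(n-2)}$ for $0\le i<q_{n-1}$ (Lemma~\ref{lemmadisjoint}) sums to $\mathcal{O}(\varpi(l_n))$. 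No detour through Lemma~\ref{lzhishu} is needed.
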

\begin{proof} The result follows from Proposition \ref{lemmaDist}, Lemma \ref{lemmadisjoint}, and Lemma \ref{lemmaifenjie}. The proof is identical to that of Lemma 6 in \cite{Khanin09Invent}. 
\end{proof}

Now we are in a position to establish the crucial Denjoy-type inequality via modulus of continuity, i.e., a stronger version of Statement \ref{item:a}. It can be seen later that it plays a significant role in dealing with the $ C^1 $ smoothness of the homeomorphism $ \phi $. Actually, considering the work of Teplinski \cite{Teplinski09}, the result below could be extended to the case of $ C_{3,\varpi} $ in studying $ C^2 $ conjugacy, we do not pursue that.

\begin{theorem}[Denjoy-type inequality via modulus of continuity]\label{denjoymodulus}
Assume $ T $ to be a $ C_{2,\varpi} $ orientation-preserving circle diffeomorphism with an irrational rotation number (need not satisfy any nonresonant conditions) and a modulus of continuity $ \varpi $. Then 
\[{({T^{{q_n}}})^\prime }(\xi ) = 1 + \mathcal{O}\left( {{\tau _n}} \right),\quad {\tau _n}: = \sum\limits_{k = 0}^n {\frac{{{l_n}}}{{{l_{n - k}}}}\varpi \left( {{l_{n - k - 1}}} \right)}  .\]
In particular,
\begin{equation}\label{taunjie}
	{\tau _n} = \mathcal{O}\left( {{\lambda ^n}\int_{{\lambda ^n}}^1 {{y^{ - 2}}\varpi \left( y \right)dy} } \right)=o(1).
\end{equation}
\end{theorem}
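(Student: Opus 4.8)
The plan is to run the classical Denjoy bootstrap argument, but track the modulus of continuity $\varpi$ explicitly through the cross-ratio distortion machinery rather than using $BV$ bounds. First I would fix a marked point $\xi_0$ and work with the quantities $M_n$, $K_n$ defined in \eqref{MnKndy}. The key inputs are: (i) Lemma \ref{logdistjie}, which says $\log\mathrm{Dist}=\mathcal{O}(\varpi(l_{n-1}))$ on the relevant fundamental segments; (ii) the identities \eqref{eq1}--\eqref{eq3}; (iii) the geometric decay $l_{n+m}/l_n=\mathcal{O}(\lambda^m)$ from Lemma \ref{lzhishu} (Statement (B)); and (iv) Lemma \ref{deltal} relating $|\Delta_0^{(n+m)}|/|\Delta_0^{(n)}|$ to $l_{n+m}/l_n$. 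From Lemma \ref{logdistjie} and \eqref{KhIn2} one gets that the functions $M_n(\xi)$ and $K_n(\xi)$ vary by a factor $1+\mathcal{O}(\varpi(l_{n-1}))$ (resp. $1+\mathcal{O}(\varpi(l_n))$) as $\xi$ ranges over their domains; in particular each of $M_n(\xi_0)$, $M_n(\xi_{q_{n-1}})$, $M_n(\xi_{q_n})$, $K_n(\xi_0)$, $K_n(\xi_{q_n})$ differs from the others by such a factor. The point of \eqref{eq1} is then that $M_n(\xi_0)\cdot M_n(\xi_{q_{n-1}})$ and $K_n(\xi_0)M_n(\xi_{q_n})$ are comparable up to $1+\mathcal{O}(\varpi(l_{n-1}))$, which forces $M_n(\xi_0)=1+\mathcal{O}(\varpi(l_{n-1}))+(\text{lower-order corrections from }K_n)$.

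Next I would set up the recursion. Write $e_n:=\log(T^{q_n})'(\xi_0)$. Using the expressions for $M_n(\xi_0)=(T^{q_n})'(\xi_0):|\Delta_{q_n}^{(n-1)}|/|\Delta_0^{(n-1)}|$ and for $K_n(\xi_0)$ from the proof of the previous lemma, together with \eqref{eq3} rewritten as
\begin{equation}\notag
\frac{(T^{q_n})'(\xi_0)}{M_n(\xi_0)}-1=\frac{|\Delta_0^{(n)}|}{|\Delta_0^{(n-1)}|}\left(1-\frac{(T^{q_{n-1}})'(\xi_0)}{K_n(\xi_0)}\right),
\end{equation}
I expect to derive an inequality of the schematic form
\begin{equation}\notag
\bigl|(T^{q_n})'(\xi_0)-1\bigr|\le C\,\varpi(l_{n-1})+C\,\frac{|\Delta_0^{(n)}|}{|\Delta_0^{(n-1)}|}\bigl|(T^{q_{n-1}})'(\xi_0)-1\bigr|+(\text{similar terms}),
\end{equation}
where the contraction factor $|\Delta_0^{(n)}|/|\Delta_0^{(n-1)}|$ is $\mathcal{O}(l_n/l_{n-1})=\mathcal{O}(\lambda)$ by Lemmas \ref{deltal} and \ref{lzhishu}. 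Iterating this back from $n$ to $0$, each step picks up a fresh $\varpi(l_{n-k-1})$ weighted by the accumulated contraction $\prod$ of ratios $|\Delta_0^{(n-j+1)}|/|\Delta_0^{(n-j)}|=\mathcal{O}(l_n/l_{n-k})$, which yields exactly
\begin{equation}\notag
(T^{q_n})'(\xi_0)=1+\mathcal{O}\!\left(\sum_{k=0}^n\frac{l_n}{l_{n-k}}\varpi(l_{n-k-1})\right)=1+\mathcal{O}(\tau_n).
\end{equation}
A standard distortion argument (e.g. via $\log\mathrm{Dist}$ on a chain of fundamental intervals covering $\mathbb{T}^1$, using Lemmas \ref{lemmadisjoint} and \ref{lemmaifenjie}) upgrades the bound at $\xi_0$ to the uniform bound $(T^{q_n})'(\xi)=1+\mathcal{O}(\tau_n)$ for all $\xi$.

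For \eqref{taunjie} I would convert the sum to an integral using $l_{n-k}\sim\lambda^{n-k}$ up to constants (more precisely $l_m=\mathcal{O}(\lambda^m)$ and $l_m\ge\Delta_m$ with $\Delta_m$ bounded below by a geometric sequence via $\Delta_n=k_{n+2}\Delta_{n+1}+\Delta_{n+2}$, so $l_{n-k}$ is comparable to $\lambda^{n-k}$ in the crude two-sided sense needed here). Then $\frac{l_n}{l_{n-k}}\varpi(l_{n-k-1})\asymp\lambda^{k}\varpi(\lambda^{n-k-1})$, and since $\varpi$ is a modulus of continuity (sub-additive, so $\varpi(\lambda^{-1}y)\le C\varpi(y)$), the sum $\sum_{k}\lambda^k\varpi(\lambda^{n-k})$ is comparable to the Riemann sum for $\lambda^n\int_{\lambda^n}^{1}y^{-2}\varpi(y)\,dy$ after the substitution $y=\lambda^{n-k}$, $dy/y\asymp 1$. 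The finiteness (i.e. $\tau_n=o(1)$) then follows because $\lambda^n\int_{\lambda^n}^1 y^{-2}\varpi(y)\,dy\to 0$: split the integral at a fixed small $\delta$; on $[\delta,1]$ the integral is bounded so the prefactor $\lambda^n$ kills it, and on $[\lambda^n,\delta]$ one uses $\varpi(y)\le\varpi(\delta)$ monotonicity... actually one uses $\varpi(y)/y$ near-decreasing-type control, more carefully: write $\lambda^n\int_{\lambda^n}^\delta y^{-2}\varpi(y)dy$ and use $\varpi(y)\le (y/\lambda^n+1)\varpi(\lambda^n)$ by property (3) of Definition \ref{definition1} to bound it by $\mathcal{O}(\varpi(\lambda^n)+\lambda^n\log(\delta/\lambda^n)\varpi(\delta)/\delta)$ — wait, I'd instead just use $\int_{\lambda^n}^\delta y^{-2}\varpi(y)dy\le \varpi(\delta)\int_{\lambda^n}^\delta y^{-2}dy\le\varpi(\delta)\lambda^{-n}$, giving the bound $\varpi(\delta)$, hence $\limsup_n\tau_n\le C\varpi(\delta)\to 0$ as $\delta\to 0^+$.

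\textbf{Main obstacle.} The delicate point is bookkeeping the error terms in the recursion so that the contraction factor genuinely beats the constants in the $\mathcal{O}$'s — i.e. ensuring the constant $C$ in $|(T^{q_n})'-1|\le C\varpi(l_{n-1})+(\text{ratio})C|(T^{q_{n-1}})'-1|$ can be chosen uniformly in $n$ while the ratio stays below a fixed $\lambda<1$, so that the geometric series converges and the weighted sum $\tau_n$ emerges cleanly rather than with a blow-up in $n$. This is exactly where the explicit $\lambda\in(\tfrac12,1)$ from Statement (B) and the a priori $\mathcal{O}(1)$ bound on $\log(T^{q_n})'$ (Statement (A)) are used to close the loop.
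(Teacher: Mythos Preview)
Your overall architecture matches the paper's: control the variation of $M_n$ and $K_n$ via Lemma \ref{logdistjie}, then feed the identities \eqref{eq1}--\eqref{eq3} into a contraction recursion. But there is a genuine gap at the pivotal step.

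From the distortion bounds you only get that $M_n(\xi)$ oscillates about some common value $m_n$ by $1+\mathcal{O}(\varpi(l_{n-1}))$, and $K_n(\xi)$ about (the same, via \eqref{eq1}) value $m_n$ by $1+\mathcal{O}(\varpi(l_n))$. Identity \eqref{eq1} tells you these two common values coincide; it does \emph{not} tell you that $m_n$ is close to $1$. Your sentence ``which forces $M_n(\xi_0)=1+\mathcal{O}(\varpi(l_{n-1}))+\text{(lower-order corrections)}$'' is therefore unjustified. And without $M_n(\xi_0)=1+\mathcal{O}(\varpi)$ and $K_n(\xi_0)=1+\mathcal{O}(\varpi)$, the passage from \eqref{eq3} to your recursion for $(T^{q_n})'(\xi_0)-1$ breaks down: you would only get a recursion for $(T^{q_n})'(\xi_0)/M_n(\xi_0)-1$, with an unknown multiplicative drift $M_n(\xi_0)$ that is merely $\mathcal{O}(1)$ by Statement (A). The paper closes this by a \emph{preliminary} recursion built from \eqref{eq2}: writing $K_{n+1}(\xi_{q_{n-1}})-1=\frac{|\Delta_0^{(n+1)}|}{|\Delta_0^{(n-1)}|}\bigl(M_n(\xi_{q_{n+1}})-1\bigr)$ and replacing $K_{n+1},M_n$ by $m_{n+1},m_n$ (up to $\mathcal{O}(\varpi)$) yields $m_{n+1}-1=\frac{|\Delta_0^{(n+1)}|}{|\Delta_0^{(n-1)}|}(m_n-1)+\mathcal{O}(\varpi(l_{n+1}))$, which iterates to $m_n-1=\mathcal{O}(\varpi(l_n))$. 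Only then is \eqref{eq3} usable in the form you wrote. You listed \eqref{eq2} as an input but never deployed it; that is the missing idea.

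Two smaller points. First, for the integral bound you assert $l_{n-k}$ is two-sidedly comparable to $\lambda^{n-k}$. This is false in general: $l_m=\mathcal{O}(\lambda^m)$ holds, but the lower bound $l_m\ge\Delta_m$ can be arbitrarily small depending on the partial quotients. Fortunately only the upper bounds are needed: $l_n/l_{n-k}=\mathcal{O}(\lambda^k)$ and $\varpi(l_{n-k-1})\le\varpi(C\lambda^{n-k-1})=\mathcal{O}(\varpi(\lambda^{n-k}))$ by monotonicity and Definition \ref{definition1}(3), so $\tau_n=\mathcal{O}\bigl(\sum_k\lambda^k\varpi(\lambda^{n-k})\bigr)$ as desired. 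Second, no separate ``upgrade to uniform $\xi$'' is needed: the marked point $\xi_0$ is arbitrary and all constants in the argument are independent of it, so the bound at $\xi_0$ \emph{is} the uniform bound. Your $\delta$-splitting for $o(1)$ is a valid alternative to the paper's L'Hospital computation.
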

\begin{remark}
In fact, the regularity of $ T $ only needs to be $ C^2 $,  because there automatically exists  a  modulus of continuity $ \varpi $ such that $ T \in C_{2,\varpi}({\mathbb{T}^1}) $ due to Remark \ref{666}. Conclusion  \eqref{taunjie} implies that we do not have any additional restrictions for the above $ \varpi $, which is different from Theorem \ref{varpitype}, etc.
\end{remark}
\begin{remark}
Direct calculation gives that $ {\tau _n} = \mathcal{O}\left( {{\lambda ^{\alpha n}}} \right) $ if $ \varpi_1 \left( x \right) \sim {x^\alpha } $ with some $ 0 < \alpha  < 1 $ (the H\"older type), and $ {\tau _n} = \mathcal{O}\left( {n{\lambda ^n}} \right) $ if $ \varpi_2 \left( x \right) \sim x $ (the Lipschitz type).  In both cases $ \tau_n $ decreases exponentially, and they are  estimated to be optimal, as commented in \cite{Khanin09Invent}. As to the Logarithmic H\"older type $ \varpi_3 \sim \ (\log{x^{-1}})^{-1-\varepsilon} $ with $ \varepsilon>0 $ that satisfies the Dini condition \eqref{DINI}, $ {\tau _n} = \mathcal{O}\left( {{n^{ - 1 - \varepsilon }}} \right) $ (the same as that in \cite{ETDScon}) due to the asymptotic behavior $ \int_2^X {{{\left( {\log z} \right)}^{ - 1 - \varepsilon }}dz}  \sim X{\left( {\log X} \right)^{ - 1 - \varepsilon }} $ for $ X $ sufficiently large.
\end{remark}
\begin{proof}
It follows from Lemma \ref{logdistjie} that
\begin{align}
	\frac{{{M_n}\left( \xi  \right)}}{{{M_n}\left( \eta  \right)}} &= \mathrm{Dist}\left( {{\xi _0},\xi ,{\xi _{{q_{n - 1}}}},\eta ;{T^{{q_n}}}} \right) = 1 + \mathcal{O}\left( {\varpi \left( {{l_{n - 1}}} \right)} \right),\notag \\
	\frac{{{K_n}\left( \xi  \right)}}{{{K_n}\left( \eta  \right)}} &= \mathrm{Dist}\left( {{\xi _0},\xi ,{\xi _{{q_n}}},\eta ;{T^{{q_{n - 1}}}}} \right) = 1 + \mathcal{O}\left( {\varpi \left( {{l_n}} \right)} \right).\notag
\end{align}
Recall Statement \ref{item:a}, i.e., $ \log(T^{q_n})'(\xi_0) =\mathcal{O}(1)$, then there exist $ c_1, c_2 \in \mathbb{R} $ independent of $ n $ such that $ {c_1} \leqslant {M_n},{K_n} \leqslant {c_2} $, which gives that
\begin{align}
	\label{Mn6}{M_n}\left( \xi  \right) &= {m_n} + \mathcal{O}\left( {\varpi \left( {{l_{n - 1}}} \right)} \right),\\
	\label{Kn6}{K_n}\left( \xi  \right) &= {m_n} + \mathcal{O}\left( {\varpi \left( {{l_n}} \right)} \right),
\end{align}
where $ m_n^2 $ is the products in \eqref{eq1},  i.e.,
\[{m_n} = \sqrt {{M_n}\left( {{\xi _0}} \right) \cdot {M_n}\left( {{\xi _{{q_{n - 1}}}}} \right)}  = \sqrt {{K_n}\left( {{\xi _0}} \right) \cdot {K_n}\left( {{\xi _{{q_n}}}} \right)} .\]
Through \eqref{eq2}, \eqref{Mn6}, \eqref{Kn6} and Lemma \ref{deltal}, one can verify that
\[{m_{n + 1}} - 1 = \frac{{\left| {\Delta _0^{\left( {n + 1} \right)}} \right|}}{{\left| {\Delta _0^{\left( {n - 1} \right)}} \right|}}\left( {{m_n} - 1} \right) + \mathcal{O}\left( {\varpi \left( {{l_{n + 1}}} \right)} \right),\]
which leads to
\begin{align}
	{m_n} - 1 &= \mathcal{O}\left( {\sum\limits_{k = 0}^n {\varpi \left( {{l_{n - k}}} \right)\frac{{\left| {\Delta _0^{\left( n \right)}} \right|\left| {\Delta _0^{\left( {n - 1} \right)}} \right|}}{{\left| {\Delta _0^{\left( {n - k} \right)}} \right|\left| {\Delta _0^{\left( {n - k - 1} \right)}} \right|}}} } \right)\notag \\
	& = \mathcal{O}\left( {\sum\limits_{k = 0}^n {\varpi \left( {{l_{n - k}}} \right)\frac{{{l_n}}}{{{l_{n - k}}}} \cdot \frac{{{l_{n - 1}}}}{{{l_{n - k - 1}}}}} } \notag \right)\\
	& = \mathcal{O}\left( {\varpi \left( {{l_n}} \right)\sum\limits_{k = 0}^n {\frac{{\zeta \left( {{l_n}} \right)}}{{\zeta \left( {{l_{n - k}}} \right)}}{\lambda ^k}} } \right)\notag \\
	& = \mathcal{O}\left( {\varpi \left( {{l_n}} \right)\sum\limits_{k = 0}^n {{\lambda ^k}} } \right)\notag \\
	\label{Proof3-3}& = \mathcal{O}\left( {\varpi \left( {{l_n}} \right)} \right),
\end{align}
where $ \zeta \left( x \right): = x/\varpi \left( x \right) $, and we assume that $ \zeta $ is  monotonically increasing without loss of generality since $ \varpi $ is a  modulus of continuity, hence $ \zeta \left( {{l_n}} \right) \leqslant \zeta \left( {{l_{n - k}}} \right) $ for sufficiently large $ n,k $ thanks to $ {l_n} = \mathcal{O}\left( {{\lambda ^k}{l_{n - k}}} \right) \leqslant {l_{n - k}} $.

Substituting \eqref{Proof3-3} into \eqref{Mn6} and \eqref{Kn6}, we arrive at
\begin{align}
	{M_n}\left( \xi  \right) &= 1 + \mathcal{O}\left( {\varpi \left( {{l_n}} \right)} \right) + \mathcal{O}\left( {\varpi \left( {{l_{n - 1}}} \right)} \right) \notag \\
	\label{Mn7}& = 1 + \mathcal{O}\left( {\varpi \left( {{l_{n - 1}}} \right)} \right),
\end{align}
and
\begin{align}
	{K_n}\left( \xi  \right) &= 1 + \mathcal{O}\left( {\varpi \left( {{l_n}} \right)} \right) + \mathcal{O}\left( {\varpi \left( {{l_n}} \right)} \right) \notag \\
	\label{Kn7}& = 1 + \mathcal{O}\left( {\varpi \left( {{l_n}} \right)} \right).
\end{align}

Substituting \eqref{Mn7} and \eqref{Kn7} into \eqref{eq3}, we derive
\begin{align}
	&{\left( {{T^{{q_{n + 1}}}}} \right)^\prime }\left( {{\xi _0}} \right)\left( {1 + \mathcal{O}\left( {\varpi \left( {{l_n}} \right)} \right)} \right) - 1\notag \\
	\label{zhankai}  = & \frac{{\left| {\Delta _0^{\left( {n + 1} \right)}} \right|}}{{\left| {\Delta _0^{\left( n \right)}} \right|}}\left[ {1 - {{\left( {{T^{{q_n}}}} \right)}^\prime }\left( {{\xi _0}} \right)\left( {1 + \mathcal{O}\left( {\varpi \left( {{l_{n + 1}}} \right)} \right)} \right)} \right].
\end{align}
In view of Statements \ref{item:a} and \ref{item:b}, we get
\[\frac{{\left| {\Delta _0^{\left( {n + 1} \right)}} \right|}}{{\left| {\Delta _0^{\left( n \right)}} \right|}} = \mathcal{O}\left( {\frac{{{l_{n + 1}}}}{{{l_n}}}} \right) = \mathcal{O}\left( 1 \right),\quad {\left( {{T^{{q_n}}}} \right)^\prime }\left( {{\xi _0}} \right) = \mathcal{O}\left( 1 \right),\]
which leads to
\begin{equation}\label{Tqn}
	{\left( {{T^{{q_{n + 1}}}}} \right)^\prime }\left( {{\xi _0}} \right) - 1 = \frac{{\left| {\Delta _0^{\left( {n + 1} \right)}} \right|}}{{\left| {\Delta _0^{\left( n \right)}} \right|}}\left( {1 - {{\left( {{T^{{q_n}}}} \right)}^\prime }\left( {{\xi _0}} \right)} \right) + \mathcal{O}\left( {\varpi \left( {{l_n}} \right)} \right)
\end{equation}
because of \eqref{zhankai}. By iterating \eqref{Tqn}, we get
\begin{align}
	{\left( {{T^{{q_n}}}} \right)^\prime }\left( {{\xi _0}} \right) &= 1 + \mathcal{O}\left( {\sum\limits_{k = 0}^n {\frac{{\left| {\Delta _0^{\left( n \right)}} \right|}}{{\left| {\Delta _0^{\left( {n - k} \right)}} \right|}}\varpi \left( {{l_{n - k - 1}}} \right)} } \right)\notag \\
	\label{Proof3-4} &= 1 + \mathcal{O}\left( {\sum\limits_{k = 0}^n {\frac{{{l_n}}}{{{l_{n - k}}}}\varpi \left( {{l_{n - k - 1}}} \right)} } \right) \\
	& = 1 + \mathcal{O}\left( {{\tau _n}} \right),\notag
\end{align}
where Lemma \ref{deltal} is used in \eqref{Proof3-4}, and
\[{\tau _n}: = \sum\limits_{k = 0}^n {\frac{{{l_n}}}{{{l_{n - k}}}}\varpi \left( {{l_{n - k - 1}}} \right)} .\]

As to \eqref{taunjie}, we first assume that $ {x^{ - 2}}\varpi \left( x \right) $ is decreasing on $ (0,1) $ without loss of generality due to Definition \ref{definition1}, therefore by applying Lemma \ref{lzhishu} we obtain that
\begin{align*}
	{\tau _n}&  = \sum\limits_{k = 0}^n {\frac{{{l_n}}}{{{l_{n - k}}}}\varpi \left( {{l_{n - k - 1}}} \right)} \\
	&  = \mathcal{O}\left( {\sum\limits_{k = 0}^n {{\lambda ^k}\varpi \left( {{\lambda ^{n - k}}} \right)} } \right)\\
	& = \mathcal{O}\left( {{\lambda ^n}\sum\limits_{k = 0}^n {\left( {{\lambda ^k} - {\lambda ^{k + 1}}} \right) \cdot {\lambda ^{ - k}}\varpi \left( {{\lambda ^k}} \right)} } \right)\\
	& = \mathcal{O}\left( {{\lambda ^n}\int_{{\lambda ^n}}^1 {{y^{ - 2}}\varpi \left( y \right)dy} } \right).
\end{align*}
Additionally, it can be proved according to L'Hospital's rule and $ \varpi \left( {0 + } \right) = 0 $ that
\begin{align*}
	\mathop {\lim }\limits_{n \to  + \infty } {\lambda ^n}\int_{{\lambda ^n}}^1 {{y^{ - 2}}\varpi \left( y \right)dy}  & = \mathop {\lim }\limits_{z \to  + \infty } \frac{{\int_{{z^{ - 1}}}^1 {{y^{ - 2}}\varpi \left( y \right)dy} }}{z}\\
	& = \mathop {\lim }\limits_{z \to  + \infty } \varpi \left( {{z^{ - 1}}} \right) = 0,
\end{align*}
which gives \eqref{taunjie}.

This completes the proof of Theorem \ref{denjoymodulus}.
\end{proof}

\subsection{$ C^1 $ conjugacy through the convergence of $ \sum\nolimits_{n = 0}^\infty  {{k_{n + 1}}{\tau _n}}  $}
To obtain the $ C^1 $ regularity of $ \phi $ in Statement \ref{item:c}, we employ the method of constructing the continuous density $ h: \mathbb{T}^1 \to \mathbb{R}^+ $ of the invariant probability measure for $ T $  and the corresponding homological equation, see for instance,  \cite{Arnold,Khanin09Invent,MR0610981,SinaiKhanin89}. \textit{Specifically, the convergence of $ \sum\nolimits_{n = 0}^\infty  {{k_{n + 1}}{\tau _n}}  $ implies  $ C^1 $ conjugacy.}

\begin{theorem}\label{c1conjugacy}
The conjugation $ \phi $ in Statement \ref{item:c} is $ C^1 $ if
\begin{equation}\label{shoulian}
	\sum\limits_{n = 0}^\infty  {{k_{n + 1}}{\tau _n}}< \infty.
\end{equation}
\end{theorem}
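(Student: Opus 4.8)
The plan is to construct the $ T $-invariant probability measure and show that its density is continuous and bounded away from zero, from which $ C^1 $ conjugacy follows by a standard argument. First I would fix a marked point $ \xi_0 $ and, for each $ n $, use the disjointness from Lemma \ref{lemmadisjoint} to build a piecewise-affine candidate density $ h_n $ that is constant on each fundamental segment $ \Delta_i^{(n)} $, $ 0 \leqslant i < q_{n+1} $, with value proportional to $ |\Delta_i^{(n)}|^{-1} \cdot (\text{length scale})$, normalized to total mass one; this is the classical construction of Herman and Katznelson--Ornstein. The ratios $ h_n(T\xi)/h_n(\xi) $ are governed by $ (T^{q_n})'$-type quantities, so the Denjoy-type inequality of Theorem \ref{denjoymodulus}, namely $ (T^{q_n})'(\xi) = 1 + \mathcal{O}(\tau_n) $ uniformly, becomes the main input.

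Next I would estimate the difference $ \|h_{n+1} - h_n\|_{C^0} $. Passing from level $ n $ to level $ n+1 $ subdivides each $ \Delta_i^{(n)} $ into roughly $ k_{n+2} $ pieces (by Lemma \ref{lemmaifenjie} and the combinatorics of the continued fraction), and on each such refinement the relative change of the density is controlled by the deviation of the relevant $ (T^{q_s})' $ from $ 1 $, i.e. by $ \mathcal{O}(\tau_s) $, accumulated over the $ k_{n+2} $-fold subdivision. This yields a bound of the form $ \|h_{n+1} - h_n\|_{C^0} = \mathcal{O}(k_{n+2}\tau_n) $ (up to reindexing), so that under hypothesis \eqref{shoulian} the telescoping series $ \sum_n \|h_{n+1} - h_n\|_{C^0} $ converges, hence $ h_n \to h $ uniformly with $ h $ continuous. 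Since each $ h_n $ is uniformly bounded above and below (a consequence of Statement (A), which gives $ \log(T^{q_n})'(\xi_0) = \mathcal{O}(1) $, together with (B)), the limit $ h $ satisfies $ 0 < c \leqslant h \leqslant C < \infty $. One then checks that $ h $ is $ T $-invariant: the defining relation $ h_n \circ T \cdot T' = h_n + (\text{error vanishing in } n) $ passes to the limit because of the uniform convergence, so $ h\circ T \cdot T' = h $ almost everywhere and, by continuity of both sides, everywhere.

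Finally, with $ h $ continuous, positive, $ T $-invariant and of total integral one, I would set $ \phi(x) = \int_0^x h(t)\,dt $ (suitably normalized so that $ \phi(1) = 1 $, i.e. using $ \int_0^1 h = 1 $). Then $ \phi $ is a $ C^1 $ diffeomorphism of $ \mathbb{T}^1 $ with $ \phi' = h > 0 $, and the invariance relation shows $ \phi \circ T \circ \phi^{-1} $ is a $ C^1 $ circle diffeomorphism whose derivative is identically $ 1 $, hence a rotation; its rotation number is $ \rho(T) $ by conjugacy-invariance of the rotation number, so $ \phi \circ T \circ \phi^{-1} = R_{\rho} $. By uniqueness of the conjugating homeomorphism (up to rotation) in Statement (C), this $ \phi $ coincides with the one there, which is therefore $ C^1 $. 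The main obstacle I anticipate is the bookkeeping in the second step: correctly tracking how the error terms $ \mathcal{O}(\tau_s) $ compound across a $ k_{n+2} $-fold subdivision and confirming that the accumulated error is genuinely $ \mathcal{O}(k_{n+1}\tau_n) $ rather than something larger (e.g. controlling the product $ \prod (1 + \mathcal{O}(\tau_s)) $ over many factors and extracting a linear-in-$ k_{n+1} $ bound), and ensuring all constants are uniform in $ n $; the rest is a faithful adaptation of the classical scheme with $ \tau_n $ in place of the exponentially small H\"older error.
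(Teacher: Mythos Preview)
Your plan is sound and would succeed, but the paper takes a shorter route that bypasses exactly the bookkeeping you flag as the main obstacle. Instead of building piecewise-constant approximants $h_n$ on the dynamical partitions and controlling $\|h_{n+1}-h_n\|_{C^0}$, the paper works directly on the dense orbit $\Theta=\{T^i\xi_0\}$: it sets $\gamma(\xi_0)=0$, $\gamma(\xi_{i+1})-\gamma(\xi_i)=-\log T'(\xi_i)$, so that $\gamma(\xi_j)-\gamma(\xi_i)=-\log(T^{j-i})'(\xi_i)$. When $\xi_j\in\Delta_i^{(n)}$, Lemma~\ref{lemmaifenjie} writes $j-i=q_n+\sum_{s\geqslant n+1}\hat k_{s+1}q_s$ with $\hat k_{s+1}\leqslant k_{s+1}$, and Theorem~\ref{denjoymodulus} then gives $|\gamma(\xi_j)-\gamma(\xi_i)|=\mathcal{O}\big(\sum_{s\geqslant n}k_{s+1}\tau_s\big)\to 0$, so $\gamma$ extends continuously to $\mathbb{T}^1$ and $h:=e^{\gamma}/\int_{\mathbb{T}^1}e^{\gamma}$ is automatically continuous, positive, and satisfies the cohomological equation $h(T\xi)T'(\xi)=h(\xi)$ by construction. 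Your Herman--Katznelson--Ornstein scheme gives a more geometric picture of the density building up on successive partitions, but note that the level-$n$ partition needs \emph{both} families $\{\Delta_i^{(n)}\}_{0\leqslant i<q_{n+1}}$ and $\{\Delta_i^{(n-1)}\}_{0\leqslant i<q_n}$ (the segments you list do not by themselves cover $\mathbb{T}^1$), and you must control products of $1+\mathcal{O}(\tau_s)$ across the refinement; the paper's cohomological shortcut replaces all of this by a single tail estimate on $\sum k_{s+1}\tau_s$.
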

\begin{proof}
Let $ \xi_0 $ be arbitrarily fixed and consider the corresponding trajectory $ \Theta : = \left\{ {{T^i}{\xi _0}:i \in \mathbb{N}} \right\} $. If $ \Theta $ fills up the circle in a dense way by ergodicity, then the homeomorphism $ \phi $ in the classical Denjoy's theory can be defined uniquely up to a rotation $ \rho $, and the conjugation $ \phi $ can be written as
\[\phi \left( \xi  \right) = \int_{{\xi _0}}^\xi  {\mu \left( {dx} \right)}, \]
where $ \mu \left( {dx} \right) $ is a normalized measure invariant with respect to $ T $.

Firstly, define a discrete mapping $ \gamma $ on $ \Theta $ that depends on $ \xi_0 $:
\begin{equation}\label{gammaxunfu}
	\gamma \left( {{\xi _0}} \right) = 0,\quad \gamma \left( {{\xi _{i + 1}}} \right) - \gamma \left( {{\xi _i}} \right) =  - \log T'\left( {{\xi _i}} \right),\;i \in {\mathbb{N}^ + }.
\end{equation}
Then we obtain the following by Lemma \ref{lemmaifenjie}, Theorem \ref{denjoymodulus} and Cauchy's theorem through the convergence in \eqref{shoulian}, as long as $ {\xi _j} \in \Delta _i^{\left( n \right)} $ with $ j > i $:
\begin{align*}
	\left| {\gamma \left( {{\xi _j}} \right) - \gamma \left( {{\xi _i}} \right)} \right| &\leqslant \sum\limits_{s = n}^\infty  {{k_{s + 1}}\left| {\log T'\left( {{\xi _s}} \right)} \right|} \\
	&=\mathcal{O}\left( {\sum\limits_{s = n}^\infty  {{k_{s + 1}}{\tau _s}} } \right) = o\left( 1 \right),\quad n \to  + \infty .
\end{align*}
This implies that $ \gamma  \in C\left( \Theta  \right) $. Noting the arbitrary choice of $ \xi_0 $ and the density of $ \Theta $, we can continuously extend the mapping $ \gamma $ onto $ \mathbb{T}^1 $. Now define the density $ h $ as:
\[h\left( \xi  \right) = \frac{{{e^{\gamma \left( \xi  \right)}}}}{{\int_{{\mathbb{T}^1}} {{e^{\gamma \left( x \right)}}dx} }}.\]
Obviously $ h>0 $ is continuous on $ \mathbb{T}^1 $, and
\[\int_{{\mathbb{T}^1}} {h\left( \xi  \right)d\xi }  = 1,\]
which implies that $ h $ is indeed a normalized invariant measure, and
\begin{equation}\label{homo1}
	\gamma \left( {T\xi } \right) - \gamma \left( \xi  \right) =  - \log T'\left( \xi  \right)
\end{equation}
by \eqref{gammaxunfu}. Note that by \eqref{homo1} we obtain the desired homological equation
\begin{equation}\label{homo2}
	h\left( {T\xi } \right) = \frac{{{e^{\gamma \left( {T\xi } \right)}}}}{{\int_{{\mathbb{T}^1}} {{e^{\gamma \left( x \right)}}dx} }} = \frac{{{e^{\gamma \left( \xi  \right) - \log T'\left( \xi  \right)}}}}{{\int_{{\mathbb{T}^1}} {{e^{\gamma \left( x \right)}}dx} }} = \frac{{h\left( \xi  \right)}}{{T'\left( \xi  \right)}}.
\end{equation}
Define a mapping
\[\phi \left( \xi  \right): = \int_{{\xi _0}}^\xi  {h\left( x \right)dx}  \equiv \int_{{\xi _0}}^\xi  {\mu \left( {dx} \right)} .\]
One can verify that $ \phi $ is indeed a $ C^1 $ diffeomorphism such that the following conjugate holds
\begin{equation}\label{weifengonge}
	\phi  \circ T \circ {\phi ^{ - 1}} = {R_\rho },
\end{equation}
because through the homological equation \eqref{homo2} we have
\[{\left( {\phi  \circ T} \right)^\prime }  = \phi '\left( {T\xi } \right) \cdot T'\left( \xi  \right) = h\left( {T\xi } \right) \cdot T'\left( \xi  \right) = h\left( \xi  \right) = {\left( {{R_\rho } \circ \phi } \right)^\prime }.\]
This completes the proof.
\end{proof}

\subsection{$ C^1 $ conjugacy and further regularity through irrational rotation $ \rho $ of $ \varphi $-type}\label{SubHerman}
Let $ \rho  = \left[ {{k_1},{k_2}, \ldots } \right] \in \mathbb{T}^1  $ be an irrational number. The simplest case is where $ \rho $ is of the constant type, or equivalently, is termed Diophantine of exponent $ 2 $, if $ k_n=\mathcal{O}(1) $, as pointed out by Petersen in \cite{Acta96}. This situation appears  in many fields, including dynamical systems, number theory, asymptotic analysis, and even stochastic fields, because its good properties can lead to some remarkable results, such as the Hardy-Littlewood series $ \sum\nolimits_{n = 1}^\infty  {{{\left( {n\sin \left( {\pi n\rho } \right)} \right)}^{ - 1}}}   $ and extensions \cite{MR3308897}. A more complicated case could be considered as $ k_n=\mathcal{O}(n^{\nu}) $ with some $ \nu>0 $, i.e., the polynomial type. These two  situations have attracted a lot of research in conjugacy of circle diffeomorphisms, see for instance, Herman's Theorem \cite{Herman79}, i.e., a $ C^{2+\gamma} $ ($ \gamma>0 $) mapping $ T $ in Theorem \ref{denjoymodulus} with a polynomial type irrational rotation $ \rho $ can lead to $ C^1 $ conjugacy in Denjoy's theory.   See also Sina\u{\i} and Khanin   \cite{cmp87,SinaiKhanin89} for irrationality of the polynomial type. This is somewhat different in characterizing irrationality  from Diophantine, Bruno and even Liouville conditions, but they are obviously more convenient and straightforward for studying rotational $ C^1 $ conjugacy than the latter, we thus apply the Denjoy-type inequality from this point of view. In addition, for some special irrational numbers, their partial quotients are very characteristic, such as the constant type
\[\phi  = \frac{{\sqrt 5  - 1}}{2} = \left[ {1,1,1, \ldots } \right]\quad (\text{Golden Number})\]
and 
\[\sqrt 2  - 1 = \left[ {2,2,2, \ldots } \right].\]
More generally, for any $ k\in \mathbb{N}^+ $, one can trivially verify that
\[\frac{{\sqrt {{k^2} + 4}  - k}}{2} = \left[ {k,k,k, \ldots } \right] \in \left\{ {x:{x^2} + kx - 1 = 0} \right\}.\]
As to the polynomial type, see for instance, 
\[\tanh 1 = \frac{{e - 1}}{{e + 1}} = \left[ {1,3,5,7, \ldots } \right]\]
by Gauss in 1812, $ k_n=\mathcal{O}(n) $ at this point. To highlight the conjugacy of most (almost) irrational rotations, one has to show whether irrational numbers are many in the sense of Lebesgue measure (or full measure) in the perspective of direct study of partial quotients. Fortunately for almost all irrational numbers, their partial quotients satisfy $ k_n=\mathcal{O}(n^\nu) $ with any $ \nu>0 $, see Lemma \ref{Knlemma} in Appendix (or see \cite[Corollary A2.2]{ETDScon} for more precise explicit examples). Although this is sufficient to represent almost irrational numbers, it is still of interest to study the more general cases of irrationality, where our regularity requirements for the mapping $ T $ are different in dealing with $ C^1 $ conjugacy.  Namely, for a nondecreasing continuous function $ \varphi $ on $ \mathbb{R}^+ $, we say that an irrational number $ \rho\in \mathbb{T}^1 $ is of $ \varphi $-type  if its partial quotients satisfy $ k_{n+1}=\mathcal{O}(\varphi(n)) $. Through  the Denjoy inequality via modulus of continuity established previously, we could give a criterion on the required regularity for $ C^1 $ conjugacy which is different from known results, thanks to the cross-ratio  distortion estimates. 
They not only simplify the proof and derive the optimal results of the H\"older type, but also bring the new perspective in the sense of weaker  continuity below.  

Next, we present a sufficient integrability condition on regularity of $ T $ for $ C^1 $ conjugacy based on the general $\varphi $-type irrationality, and provide several explicit examples. In other words, there exist different regularity requirements for $ T $ when $ k_{n+1} $ has different $ \varphi $-forms. Due to the optimality of Denjoy-type inequality claimed in \cite{Khanin09Invent}, our integrability criterion might also be accurate since we cover the case of the H\"older type.

\begin{theorem}[Main Theorem]\label{varpitype}
Let $ T $ be a $ C_{2,\varpi} $  orientation-preserving circle diffeomorphism with	rotation number $ \rho=[k_1,k_2, \ldots ] $, $ k_{n+1}=\mathcal{O}(\varphi(n)) $. Then the conjugation $ \phi $ in \eqref{weifengonge} is $ C^1 $ if the following integrability condition holds:
\begin{equation}\label{jifenyouxian}
	\int_0^1 {\left( {\int_0^y {\varphi \left( {{{\log }_\lambda }x} \right)dx} } \right)\frac{{\varpi \left( y \right)}}{{{y^2}}}dy}  <  + \infty ,
\end{equation}
where $ 0<\lambda<1 $ is given in Statement \ref{item:b}.
\end{theorem}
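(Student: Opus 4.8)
The plan is to reduce everything to Theorem~\ref{c1conjugacy}: once we know $\sum_{n=0}^\infty k_{n+1}\tau_n<\infty$, that theorem delivers the $C^1$ conjugacy $\phi$ in \eqref{weifengonge}. So the whole task is to show that the integrability condition \eqref{jifenyouxian} forces the series \eqref{shoulian} to converge. First I would insert the two available estimates: $k_{n+1}=\mathcal{O}(\varphi(n))$ by hypothesis, and by \eqref{taunjie} we have $\tau_n=\mathcal{O}\!\big(\lambda^n\int_{\lambda^n}^1 y^{-2}\varpi(y)\,dy\big)$. Hence
\[
\sum_{n=0}^\infty k_{n+1}\tau_n=\mathcal{O}\!\left(\sum_{n=0}^\infty \varphi(n)\,\lambda^n\int_{\lambda^n}^1\frac{\varpi(y)}{y^2}\,dy\right),
\]
and it remains only to bound this last series by the integral appearing in \eqref{jifenyouxian}.

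The key device is a comparison of the series with the integral of $F(t):=\varphi(t)\,\lambda^t\int_{\lambda^t}^1 y^{-2}\varpi(y)\,dy$ over $[0,\infty)$. Although $F$ is a product of an increasing factor and a decreasing factor and so need not itself be monotone, each factor is monotone: $\varphi$ is nondecreasing, $\lambda^t$ is decreasing, and $t\mapsto\int_{\lambda^t}^1 y^{-2}\varpi(y)\,dy$ is nondecreasing since the lower endpoint $\lambda^t$ shrinks. Consequently, on each interval $[n,n+1]$ one has $F(t)\geqslant \lambda\,\varphi(n)\lambda^n\int_{\lambda^n}^1 y^{-2}\varpi(y)\,dy$, and summing over $n$ gives
\[
\sum_{n=0}^\infty \varphi(n)\lambda^n\int_{\lambda^n}^1\frac{\varpi(y)}{y^2}\,dy\;\leqslant\;\lambda^{-1}\int_0^\infty F(t)\,dt.
\]

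Next I would identify $\int_0^\infty F(t)\,dt$ with the integral in \eqref{jifenyouxian} by an explicit change of variables together with Tonelli's theorem. Substituting $x=\lambda^t$ (so that $\log_\lambda x=t$ and $dt=dx/(x\ln\lambda)$, with the orientation of the limits reversing because $0<\lambda<1$) turns $\int_0^\infty F(t)\,dt$ into $|\ln\lambda|^{-1}\int_0^1\varphi(\log_\lambda x)\big(\int_x^1 y^{-2}\varpi(y)\,dy\big)\,dx$; swapping the order of integration over the triangle $\{0<x<y<1\}$ then recasts this as $|\ln\lambda|^{-1}\int_0^1\big(\int_0^y\varphi(\log_\lambda x)\,dx\big)\,y^{-2}\varpi(y)\,dy$, which is finite precisely by \eqref{jifenyouxian}. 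Chaining the three displays yields $\sum_{n=0}^\infty k_{n+1}\tau_n<\infty$, and Theorem~\ref{c1conjugacy} finishes the proof.

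As for the main obstacle: essentially all the analytic work has already been absorbed into Theorem~\ref{denjoymodulus} (the Denjoy-type inequality and the explicit form \eqref{taunjie} of $\tau_n$) and into Theorem~\ref{c1conjugacy}, so what is left is bookkeeping rather than hard estimation. The one point that demands genuine care is the series-to-integral comparison, exactly because $F$ is not monotone; the remedy is the factor-by-factor monotonicity bound above, which costs only the harmless constant $\lambda^{-1}$. A secondary nuisance is keeping the signs correct in the change of variables, since the logarithm base $\lambda$ is less than one, and noting that the inner integrals $\int_{\lambda^t}^1 y^{-2}\varpi(y)\,dy$ are over intervals bounded away from the origin, so they are automatically finite and no Dini-type hypothesis on $\varpi$ is needed here — all of the integrability is carried by condition \eqref{jifenyouxian} itself.
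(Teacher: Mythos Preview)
Your proposal is correct and follows essentially the same route as the paper: reduce to Theorem~\ref{c1conjugacy}, insert the bounds $k_{n+1}=\mathcal{O}(\varphi(n))$ and \eqref{taunjie}, convert the resulting series into the iterated integral in \eqref{jifenyouxian} via a change of variables $x=\lambda^t$ and Tonelli. The only cosmetic difference is that the paper passes from the series to the integral by writing $\lambda^n\sim(\lambda^n-\lambda^{n+1})$ and reading the sum as a Riemann sum, whereas you justify the same step by the factor-by-factor monotonicity bound on $F(t)$; your version is in fact more explicit about why this comparison is legitimate.
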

\begin{remark}\label{remark999}
Note that $ C^1 $ conjugacy can be directly obtained from Theorems \ref{denjoymodulus} and \ref{c1conjugacy}, as long as
\begin{equation}\label{414}
	\sum\limits_{n = 0}^\infty  {{k_{n + 1}}{\lambda ^n}\int_0^{{\lambda ^n}} {{y^{ - 2}}\varpi \left( y \right)dy} }  = \mathcal{O}\left( 1 \right).
\end{equation}
We still establish the $ \varphi $-type theorem because the convergence condition is completely explicit (we suspect it is almost sharp in  general settings), although it may not be accurate for some given irrational numbers, since $ k_n $ may be large only at a very small number of points (although they are infinite many). We do not pursue that.
\end{remark}
\begin{proof}
In view of \eqref{jifenyouxian}, we derive that
\begin{align*}
	\sum\limits_{n = 0}^\infty  {{k_{n + 1}}{\tau _n}}  &= \mathcal{O}\left( {\sum\limits_{n = 0}^\infty  {\varphi \left( n \right){\tau _n}} } \right)\\
	& = \mathcal{O}\left( {\sum\limits_{n = 0}^\infty  {\varphi \left( n \right){\lambda ^n}\int_{{\lambda ^n}}^1 {\frac{{\varpi \left( y \right)}}{{{y^2}}}dy} } } \right)\\
	& = \mathcal{O}\left( {\sum\limits_{n = 0}^\infty  {\left( {{\lambda ^n} - {\lambda ^{n + 1}}} \right)\varphi \left( {{{\log }_\lambda }{\lambda ^n}} \right)\int_{{\lambda ^n}}^1 {\frac{{\varpi \left( y \right)}}{{{y^2}}}dy} } } \right)\\
	& = \mathcal{O}\left( {\int_0^1 {\varphi \left( {{{\log }_\lambda }x} \right)dx\int_x^1 {\frac{{\varpi \left( y \right)}}{{{y^2}}}dy} } } \right)\\
	& = \mathcal{O}\left( {\int_0^1 {\left( {\int_0^y {\varphi \left( {{{\log }_\lambda }x} \right)dx} } \right)\frac{{\varpi \left( y \right)}}{{{y^2}}}dy} } \right)\\
	& = \mathcal{O}\left( 1 \right).
\end{align*}
Then the conclusion follows from Theorem \ref{c1conjugacy} directly.
\end{proof}

Integrability  condition \eqref{jifenyouxian} on modulus of continuity seems to be complicated, but for many common cases one could simplify \eqref{jifenyouxian}  explicitly through asymptotic analysis, see the Corollary \ref{coro1} given below. As it can be seen that, to obtain $ C^1 $ conjugacy on $ \mathbb{T}^1 $, the larger $ k_n $ is, the stronger the required regularity has to be.

\begin{corollary}\label{coro1} The $ C_{2,\varpi} $ mapping $ T $ in Theorem \ref{varpitype} admits $ C^1 $  conjugacy, if the corresponding irrational rotation number $ \rho\in\mathbb{T}^1 $ is of the following type and the modulus of continuity satisfies the integrability condition:
\begin{itemize}
	\item[(C1)] The constant type, i.e., $ k_{n+1}=\mathcal{O}(1) $:
	\[\int_0^1 {\frac{{\varpi \left( y \right)}}{y}dy}  <  + \infty .\]
	For instance, the Lipschitz type $ \varpi\sim x $; the H\"older type $ \varpi\sim x^\alpha$ with any $  0<\alpha<1 $; the Logarithmic H\"older type $ \varpi  \sim {\left( {\log {x^{ - 1}}} \right)^{ - \alpha }} $ with any  $  \alpha>1 $; and even
	\[\varpi \sim \frac{1}{{(\log {x^{ - 1}})(\log \log {x^{ - 1}}) \cdots {{(\underbrace {\log  \cdots \log }_\ell {x^{ - 1}})}^{1 + \sigma }}}}\]
	for any $ \ell \in \mathbb{N}^+ $ and $ \sigma>0 $.
	
	\item[(C2)] The polynomial type, i.e.,  $ k_{n+1}=\mathcal{O}(n^{\nu}) $ with some $ \nu>0 $:
	\[\int_0^1 {\frac{{{{\left( { - \log y} \right)}^\nu }\varpi \left( y \right)}}{y}dy}  <  + \infty .\]
	For instance, the Lipschitz  type $ \varpi\sim x $; the H\"older type $ \varpi\sim x^\alpha$ with  any $  0<\alpha<1 $; the Logarithmic H\"older type $ \varpi  \sim {\left( {\log {x^{ - 1}}} \right)^{ - \alpha }} $ with  any $  \alpha>\nu+1 $; and even
	\[\varpi  \sim \frac{1}{{{{(\log {x^{ - 1}})}^{\nu  + 1}}(\log \log {x^{ - 1}}) \cdots {{(\underbrace {\log  \cdots \log }_\ell {x^{ - 1}})}^{1 + \sigma }}}}\]
	for any $ \ell \in \mathbb{N}^+ $ and $ \sigma>0 $.
	
	\item[(C3)] The exponential type, i.e.,    $ k_{n+1}=\mathcal{O}(a^n) $ with some $ 1<a<\lambda^{-1} $. Then for any fixed $ b>0 $ arbitrarily small, one  has to require that
	\[\int_0^1 {\frac{{\varpi \left( y \right)}}{{{y^{1 + b}}}}dy}  <  + \infty .\]
	For instance, the Lipschitz type $ \varpi\sim x $; the H\"older type $ \varpi\sim x^\alpha$ with  any $  0<\alpha<1 $.
\end{itemize}
\end{corollary}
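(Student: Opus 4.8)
The plan is to feed each prescribed form of $\varphi$ into the integrability condition \eqref{jifenyouxian} of the Main Theorem \ref{varpitype}, evaluate the inner integral $\Phi(y):=\int_0^y\varphi(\log_\lambda x)\,dx$ asymptotically as $y\to0^+$, and thereby reduce \eqref{jifenyouxian} to the three displayed conditions; the listed examples then follow by elementary asymptotic analysis. Throughout I would use $\log_\lambda x=\ln(1/x)/\ln(1/\lambda)$, which increases monotonically to $+\infty$ as $x\to0^+$, so only the behaviour of $\varphi$ at infinity enters, and — since the integrand of \eqref{jifenyouxian} is continuous on $[1/2,1]$ — only the behaviour of $\varpi$ and of $\Phi$ near $0$ is relevant (split $\int_0^1=\int_0^{1/2}+\int_{1/2}^{1}$ and bound the second piece crudely). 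Note that for the corollary only the implication ``reduced condition $\Rightarrow$ \eqref{jifenyouxian}'' is needed, so one-sided $\mathcal{O}$-estimates on $\Phi$ suffice.

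For (C1) a bounded $\varphi$ gives $\Phi(y)=\mathcal{O}(y)$, so \eqref{jifenyouxian} is implied by the Dini condition $\int_0^1\varpi(y)/y\,dy<+\infty$. For (C2) the hypothesis $\varphi(t)=\mathcal{O}(t^\nu)$ yields $\varphi(\log_\lambda x)=\mathcal{O}\bigl(1+(\ln(1/x))^\nu\bigr)$, and a one-line L'Hospital computation (differentiating $y(\ln(1/y))^\nu$, whose derivative is $(\ln(1/y))^\nu-\nu(\ln(1/y))^{\nu-1}$) gives $\int_0^y(\ln(1/x))^\nu\,dx\sim y(\ln(1/y))^\nu$ as $y\to0^+$ — equivalently the incomplete-Gamma asymptotics $\Gamma(\nu+1,\ln(1/y))\sim(\ln(1/y))^\nu e^{-\ln(1/y)}$ — so \eqref{jifenyouxian} reduces to $\int_0^1(-\log y)^\nu\varpi(y)/y\,dy<+\infty$. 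For (C3) the key identity is $a^{\log_\lambda x}=x^{\ln a/\ln\lambda}=x^{-\beta}$ with $\beta:=\ln a/\ln(1/\lambda)$; the hypothesis $1<a<\lambda^{-1}$ is precisely what forces $\beta\in(0,1)$, hence $\Phi(y)\asymp y^{1-\beta}<\infty$, and \eqref{jifenyouxian} reduces to $\int_0^1\varpi(y)/y^{1+\beta}\,dy<+\infty$; since $\beta\downarrow0$ as $a\downarrow1$ this is the ``$b$ arbitrarily small'' formulation, and $\int_0^1\varpi(y)/y^{1+b}\,dy<+\infty$ with any $b\geqslant\beta$ suffices because $y^{-1-\beta}\leqslant y^{-1-b}$ on $(0,1)$.

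It then remains to check the displayed examples against the three reduced conditions. For the power-type moduli this is immediate: $\int_0^1 y^{\alpha-1}\,dy$, $\int_0^1 y^{\alpha-1-b}\,dy$ (finite for $b<\alpha$), and $\int_0^1(-\log y)^\nu y^{\alpha-1}\,dy=\Gamma(\nu+1)\alpha^{-\nu-1}$. For the logarithmic-H\"older and iterated-logarithm moduli I would substitute $u=\log(1/y)$, which turns every reduced integral into a Bertrand integral $\int^{\infty}\frac{du}{u(\log u)(\log\log u)\cdots(\underbrace{\log\cdots\log}_{\ell-1}u)^{1+\sigma}}$, convergent exactly when the final exponent exceeds $1$, i.e. $\sigma>0$; the extra factor $(-\log y)^\nu$ present in case (C2) is absorbed by the stronger denominator $(\log y^{-1})^{\nu+1}$ in the iterated case and, for the pure logarithmic-H\"older modulus, merely shifts the threshold to $\alpha>\nu+1$. \textbf{The main technical point} — and the only place requiring genuine care — is justifying the asymptotics $\Phi(y)\sim c\,y(\ln(1/y))^\nu$ (respectively $\Phi(y)\asymp y^{1-\beta}$) so that it may be inserted under the outer integral sign: one must verify that the implicit $\mathcal{O}$-constants coming from $k_{n+1}=\mathcal{O}(\varphi(n))$ and from the asymptotic equivalences (valid only as the argument tends to $\infty$, i.e. as $x\to0^+$) do not interfere with the harmless but non-asymptotic behaviour near $y=1$ — which is exactly what the splitting $\int_0^1=\int_0^{1/2}+\int_{1/2}^{1}$ above is designed to handle.
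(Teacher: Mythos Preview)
Your proposal is correct and follows essentially the same approach as the paper: for each of (C1)--(C3) you evaluate the inner integral $\Phi(y)=\int_0^y\varphi(\log_\lambda x)\,dx$ asymptotically (trivially for (C1), via incomplete-Gamma/L'Hospital for (C2), and via the identity $a^{\log_\lambda x}=x^{-\beta}$ for (C3)) and thereby reduce \eqref{jifenyouxian} to the three displayed conditions, exactly as the paper does. Your version is in fact more thorough --- the paper simply writes ``explicit examples are easy to verify and we thus omit the proof here'' for the iterated-logarithm moduli, whereas you sketch the Bertrand-integral verification and flag the harmless $\int_{1/2}^1$ splitting.
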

\begin{remark}\label{rema416}
(C1) is the same as the  Dini condition \eqref{DINI}, see \eqref{deltapiao} and \eqref{powermoc}  for more examples. (C2)  shows that arbitrary H\"older continuity (or even Logarithmic H\"older continuity $ (\log(x^{-1}))^{-1-\varepsilon} $ with any $ \varepsilon>0 $) is sufficient for $ C^1 $ conjugacy of almost irrational rotations.  
\end{remark}
\begin{proof}
Explicit examples are easy to verify and we thus omit the proof here.
\begin{itemize}
	\item[(C1)] Note $ \varphi(x)=1 $. Therefore by \eqref{jifenyouxian} we get
	\begin{align*}
		\int_0^1 {\left( {\int_0^y {\varphi \left( {{{\log }_\lambda }x} \right)dx} } \right)\frac{{\varpi \left( y \right)}}{{{y^2}}}dy}  &= \int_0^1 {\left( {\int_0^y {1dx} } \right)\frac{{\varpi \left( y \right)}}{{{y^2}}}dy} \\
		& = \int_0^1 {\frac{{\varpi \left( y \right)}}{y}dy}  <  + \infty.
	\end{align*}	
	
	\item[(C2)] Note $ \varphi \left( x \right) = {x^\nu } $ with some $ \nu>0 $, and
	\begin{align*}
		\int_0^y {\varphi \left( {{{\log }_\lambda }x} \right)dx}  &= \int_0^y {{{\left( {{{\log }_\lambda }x} \right)}^\nu }dx} \\
		& = \mathcal{O}\left( {\int_0^y {{{\left( { - \log x} \right)}^\nu }dx} } \right)\\
		& = \mathcal{O}\left( {\int_{ - \log y}^{ + \infty } {{z^\nu }{e^{ - z}}dz} } \right)\\
		& = \mathcal{O}\left( {{{\left( { - \log y} \right)}^\nu }y} \right).
	\end{align*}
	Then by \eqref{jifenyouxian} we have
	\begin{align*}
		\int_0^1 {\left( {\int_0^y {\varphi \left( {{{\log }_\lambda }x} \right)dx} } \right)\frac{{\varpi \left( y \right)}}{{{y^2}}}dy}  &= \mathcal{O}\left( {\int_0^1 {\frac{{{{\left( { - \log y} \right)}^\nu }\varpi \left( y \right)}}{y}dy} } \right)\\
		& = \mathcal{O}\left( 1 \right).
	\end{align*}
	
	\item[(C3)] For any fixed $ b>0 $ arbitrarily small, let $ {a_ * } = {e^{ - b\ln \lambda }} \in \left( {1,{\lambda ^{ - 1}}} \right) $. Note that $ \varphi \left( x \right) = a_ * ^x $, and
	\[\int_0^y {\varphi \left( {{{\log }_\lambda }x} \right)dx}  = \int_0^y {{x^{ - b}}dx}  = \mathcal{O}\left( {{y^{1 - b}}} \right).\]
	Then it follows that
	\begin{align*}
		\int_0^1 {\left( {\int_0^y {\varphi \left( {{{\log }_\lambda }x} \right)dx} } \right)\frac{{\varpi \left( y \right)}}{{{y^2}}}dy}  &= \int_0^1 {\frac{{\varpi \left( y \right)}}{{{y^{1 + b}}}}dy} \\
		&= \mathcal{O}\left( 1 \right).
	\end{align*}
\end{itemize}
\end{proof}

Additionally,  we could further discuss the regularity of the conjugation $ \phi $ based on the above Theorem \ref{varpitype} as a  byproduct. We emphasize that the integrability condition \eqref{jifenyouxian} is indeed crucial. As pointed out in Remark \ref{666}, $ \phi $ has a modulus of continuity $ \varpi_h $, but  it's hard to give an explicit form in general. The following Theorem \ref{higherregu} provides a way to construct an explicit one $ \widetilde{\varpi} $ such that $ \widetilde{\varpi}  \gtrsim \varpi_h $. As shown by Corollary \ref{CORO11}, optimal estimates in special cases could be derived, but  it seems difficult to obtain optimality in general settings. We also hold the opinion that the cross-ratio distortion estimates can extend the optimal results in \cite{Khanin09Invent} based on Diophantine rotations, but we will not discuss that here.

\begin{theorem}[Higher regularity]\label{higherregu}
Define
\[\mathcal{R} \left( n \right): = \int_0^{{\lambda ^n}} {\left( {\varphi \left( {{{\log }_\lambda }x} \right)\int_x^1 {\frac{{\varpi \left( y \right)}}{{{y^2}}}dy} } \right)dx} .\]
If $ \widetilde \varpi:=\mathcal{R}  \circ {{ \Delta }^{ - 1}}\left(  \cdot  \right) $ is indeed a modulus of continuity, then $ \phi$ in Theorem \ref{varpitype} belongs to $   {C_{1,\widetilde \varpi }}( {{\mathbb{T}^1}} ) $.
\end{theorem}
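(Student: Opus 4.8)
The plan is to control the modulus of continuity of $h = \phi'$ directly. Recall from the proof of Theorem \ref{c1conjugacy} that $h(\xi) = e^{\gamma(\xi)}/\int_{\mathbb{T}^1} e^{\gamma(x)}\,dx$, so since $\gamma$ is bounded and $e^{(\cdot)}$ is locally Lipschitz, it suffices to exhibit a modulus of continuity for $\gamma$. The key observation is that $\gamma$ is built from the telescoping sums $\gamma(\xi_{i+1})-\gamma(\xi_i) = -\log T'(\xi_i)$ along the dense orbit $\Theta$, and the estimate already obtained there says that for $\xi_j \in \Delta_i^{(n)}$ with $j>i$,
\[
|\gamma(\xi_j)-\gamma(\xi_i)| = \mathcal{O}\!\left(\sum_{s=n}^\infty k_{s+1}\tau_s\right).
\]
First I would re-examine this tail sum and, mimicking the computation in the proof of Theorem \ref{varpitype} (where $\sum_{s=0}^\infty k_{s+1}\tau_s$ was converted into $\int_0^1 (\int_0^y \varphi(\log_\lambda x)\,dx)\,\varpi(y)y^{-2}\,dy$ via $k_{s+1}=\mathcal{O}(\varphi(s))$, $\tau_s = \mathcal{O}(\lambda^s\int_{\lambda^s}^1 \varpi(y)y^{-2}\,dy)$, and the substitution $\lambda^s - \lambda^{s+1} \asymp$ integration), show that the tail starting at index $n$ is $\mathcal{O}(\mathcal{R}(n))$, where $\mathcal{R}(n) = \int_0^{\lambda^n} (\varphi(\log_\lambda x)\int_x^1 \varpi(y)y^{-2}\,dy)\,dx$. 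This is the natural "remainder" of the convergent series \eqref{jifenyouxian}, and convergence guarantees $\mathcal{R}(n) \to 0$.

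Next I would translate this orbit-indexed estimate into a genuine modulus of continuity on $\mathbb{T}^1$. Given two points $\xi, \eta \in \mathbb{T}^1$, choose $n$ so that $|\xi - \eta|$ is comparable to the length scale of the $n$th generation; concretely, since the maximal fundamental segment $l_n = \|T^{q_n}-\mathrm{id}\|_{C^0} \asymp \lambda^n$ and the combinatorics (Lemma \ref{lemmadisjoint}, Lemma \ref{lemmaifenjie}) force any two orbit points within distance $\sim l_n$ to lie in a common generation-$n$ fundamental segment (or in adjacent ones), the estimate above gives $|\gamma(\xi)-\gamma(\eta)| = \mathcal{O}(\mathcal{R}(n))$ whenever $|\xi-\eta| \lesssim l_n$. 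Using $l_n \asymp \Delta_n$ and the continuous decreasing extension $\Delta(\cdot)$ introduced before Lemma \ref{lemmadisjoint}, one has $n \approx \Delta^{-1}(|\xi-\eta|)$, so $|\gamma(\xi)-\gamma(\eta)| = \mathcal{O}(\mathcal{R}(\Delta^{-1}(|\xi-\eta|))) = \mathcal{O}(\tilde\varpi(|\xi-\eta|))$. Density of $\Theta$ and continuity of $\gamma$ then promote this from $\Theta$ to all of $\mathbb{T}^1$, and the same bound passes to $h$ and hence to $\phi' = h$. Since by hypothesis $\tilde\varpi = \mathcal{R}\circ\Delta^{-1}$ is assumed to be a bona fide modulus of continuity, the constant can be absorbed (using properties (2)–(3) of Definition \ref{definition1}), giving $\phi \in C_{1,\tilde\varpi}(\mathbb{T}^1)$.

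The main obstacle I anticipate is the passage from the \emph{orbit-indexed} bound $|\gamma(\xi_j)-\gamma(\xi_i)| = \mathcal{O}(\mathcal{R}(n))$ for $\xi_j\in\Delta_i^{(n)}$ to a clean two-point estimate $|\gamma(\xi)-\gamma(\eta)| = \mathcal{O}(\tilde\varpi(|\xi-\eta|))$ for \emph{arbitrary} nearby points on the circle. One must argue that if $|\xi-\eta|$ is small, then $\xi$ and $\eta$ are separated by only boundedly many consecutive generation-$n$ fundamental segments (this is where one uses that the $\Delta_i^{(n)}$, $0\le i<q_{n+1}$, tile $\mathbb{T}^1$ and that the orbit order mirrors the rigid rotation), so that at most a bounded number of tail-sum blocks are needed; the subadditivity and quasi-homogeneity axioms of $\varpi$ — inherited by $\tilde\varpi$ under the standing hypothesis — then keep the total under control. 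A secondary technical point is verifying the comparability $l_n \asymp \Delta_n \asymp \lambda^n$ uniformly (Lemma \ref{lemmalnxu}, Statement (B), Lemma \ref{lzhishu}) so that $\Delta^{-1}$ composes correctly with $\mathcal{R}$; this is routine given the cited lemmas but must be stated carefully since the definition of $\mathcal{R}$ already hard-codes the scale $\lambda^n$.
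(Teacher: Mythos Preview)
Your approach is the paper's: bound the tail $\sum_{s\geq n} k_{s+1}\tau_s$ by $\mathcal{R}(n)$ and then convert the index $n$ into a distance via $\Delta^{-1}$. The paper sidesteps the ``main obstacle'' you anticipate by measuring distance in the conjugated coordinates rather than on the circle directly: it chooses $n$ so that $\Delta_n \leq |\phi(\eta_1)-\phi(\eta_2)| < \Delta_{n-1}$, where the dynamics is the rigid rotation and every generation-$n$ fundamental segment has length exactly $\Delta_n$. The decomposition into $k\leq k_{n+1}$ iterates of $T^{q_n}$ plus the generation-$(n{+}1)$ tail is then immediate from Lemma~\ref{lemmaifenjie}, giving $|\log h(\eta_2)-\log h(\eta_1)|=\mathcal{O}(k\tau_n+\sum_{s\geq n+1}k_{s+1}\tau_s)=\mathcal{O}(\mathcal{R}(n))$; only at the very last line does one invoke the already-known bi-Lipschitz property of $\phi$ to pass from $\tilde\varpi(|\phi(\eta_1)-\phi(\eta_2)|)$ to $\tilde\varpi(|\eta_1-\eta_2|)$. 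This avoids the bounded-number-of-segments argument entirely.

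Your ``secondary technical point'' contains an actual error: $\Delta_n \asymp \lambda^n$ is false in general --- one has only the one-sided bound $\Delta_n \leq l_n = \mathcal{O}(\lambda^n)$, and $\Delta_n$ can decay arbitrarily faster when the partial quotients are unbounded. The comparability $l_n \asymp \Delta_n$ does hold, but only \emph{after} $C^1$ conjugacy is in hand (each $\Delta_i^{(n)}$ is then the $\phi^{-1}$-image of an interval of length $\Delta_n$). Fortunately none of this is needed: the $\lambda^n$ in $\mathcal{R}(n)$ enters solely through the Denjoy-type estimate $\tau_n=\mathcal{O}(\lambda^n\int_{\lambda^n}^1 y^{-2}\varpi(y)\,dy)$ and makes $\mathcal{R}$ a function of the \emph{index} $n$; the passage from index to distance is handled entirely by $\Delta^{-1}$. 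So the only inequality required for $\mathcal{R}(n)\leq \tilde\varpi(|\eta_1-\eta_2|)$ is $|\eta_1-\eta_2|\gtrsim \Delta_n$ for the chosen $n$, which the paper's scale selection (or your approach, once corrected) provides directly.
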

\begin{remark}\label{daoshuji}
Obviously $ \mathcal{R} $ is well-defined through the integrability condition \eqref{jifenyouxian} and satisfies $ \mathcal{R}(0+)=0 $ by applying Fubini's Theorem and Cauchy's Theorem. It is easy to verify that $ \widetilde \varpi \left( {0 + } \right) = 0 $. Notice that $ \Delta $ can be replaced by a function $ \widetilde{\Delta} $ that satisfies $ \widetilde{\Delta}\leqslant \Delta $, e.g., $ \widetilde \Delta \left( n \right): = \prod\nolimits_{j =  - 1}^{n + 1} {{{\left( {{k_{j + 2}} + 1} \right)}^{ - 1}}}  $.
\end{remark}
\begin{proof}
In this case only the regularity of the density $ h $ needs to be estimated. Let $ {\eta _1},{\eta _2} \in {\mathbb{T}^1} $ be fixed such that they are sufficiently close to each other. Then there exist a unique $ n\in \mathbb{N} $ such that $ {\Delta _n} \leqslant \left| {\phi \left( {{\eta _1}} \right) - \phi \left( {{\eta _2}} \right)} \right| < {\Delta _{n + 1}} $, and let $ k \in \mathbb{N}^+ $ be the largest number such that $ k{\Delta _n} \leqslant \left| {\phi \left( {{\eta _1}} \right) - \phi \left( {{\eta _2}} \right)} \right| $. Obviously $ 1 \leqslant k \leqslant {k_{n + 1}} $. In view of Lemma \ref{lemmaifenjie}, Theorem \ref{denjoymodulus}, the continuity of $ h $ (see Theorem \ref{c1conjugacy}) and the integrability condition \eqref{jifenyouxian},  we derive the following similar to that in Theorem \ref{varpitype}:
\begin{align*}
	\left| {\log h\left( {{\eta _2}} \right) - \log h\left( {{\eta _1}} \right)} \right| &= \mathcal{O}\left( {k{\tau _n} + \sum\limits_{s = n + 1}^\infty  {{k_{s + 1}}{\tau _s}} } \right)\\
	& = \mathcal{O}\left( {\sum\limits_{s = n}^\infty  {{k_{s + 1}}{\tau _s}} } \right)\\
	& = \mathcal{O}\left( {\sum\limits_{s = n}^\infty  {\varphi \left( s \right) \cdot {\lambda ^s}\int_{{\lambda ^s}}^1 {{y^{ - 2}}\varpi \left( y \right)dy} } } \right)\\
	& = \mathcal{O}\left( {\int_0^{{\lambda ^n}} {\left( {\varphi \left( {{{\log }_\lambda }x} \right)\int_x^1 {{y^{ - 2}}\varpi \left( y \right)dy} } \right)dx} } \right)\\
	&= \mathcal{O}\left( {\mathcal{R} \left( n \right)} \right).
\end{align*}
The same estimate is true for $ \left| {h\left( {{\eta _2}} \right) - h\left( {{\eta _1}} \right)} \right| $ because $ \log h\left( {{\eta _1}} \right) = \mathcal{O}\left( 1 \right) $. Since $ \phi $ has already been proved to be a diffeomorphism in Theorem \ref{varpitype}, we therefore get
\begin{align*}
	\left| {h\left( {{\eta _2}} \right) - h\left( {{\eta _1}} \right)} \right| &= \mathcal{O}\left( {\mathcal{R} \left( n \right)} \right)\\
	&= \mathcal{O}\left( {\widetilde \varpi \circ { \Delta \left( n \right)} } \right)\\
	& = \mathcal{O}\left( {\widetilde \varpi \left( {k{\Delta _n}} \right)} \right)\\
	&= \mathcal{O}\left( {\widetilde \varpi \left( {\left| {\phi \left( {{\eta _1}} \right) - \phi \left( {{\eta _2}} \right)} \right|} \right)} \right)\\
	& = \mathcal{O}\left( {\widetilde \varpi \left( {\left| {{\eta _1} - {\eta _2}} \right|} \right)} \right),
\end{align*}
which gives that $ h \in {C_{0,\widetilde \varpi }}( {{{\mathbb{T}^1}}} ) $, and thus one arrives at $ \phi  \in {C_{1,\widetilde \varpi }}( {{{\mathbb{T}^1}}} ) $.

The proof is completed.
\end{proof}

It should be emphasized that we always study circle  diffeomorphisms in terms of partial quotients $ k_n $ throughout this paper. Since $ \Delta(n) $ is uniquely determined for the known $ k_n $, some precise estimates might be obtained by applying Theorem \ref{higherregu} at this point.  As an illustration, let us consider the simplest case $ k_n=\mathcal{O}(1) $ (the 1-type irrationality), i.e., there exist $ K_1, \ldots ,K_\nu\in \mathbb{N}^+ $ with $ \nu \in \mathbb{N}^+ $ such that $ k_n\in \mathcal{S}=\left\{ {K_1, \ldots ,K_\nu} \right\} $, and we might be able to accurately estimate the asymptotic behavior of $ \Delta $ based on the probability of the numbers appearing in $ \mathcal{S} $ (or only  derive the rough estimates of $ k_n $ without considering the probability). One therefore obtains a $ C_{1,\widetilde{\varpi}} $ conjugation $ \phi $ for the mapping $ T $, and the modulus of continuity $ \widetilde{\varpi} $ might be more accurate than any known result, or even optimal.  We emphasize that the asymptotic analysis is crucial in dealing with the regularity of $ \phi $. Based on the above,  Corollary \ref{CORO11} below discusses regularity of both H\"older type and Logarithmic H\"older type, and only the simplest form of probability is studied (nevertheless, we still further improve the known optimal regularity results  so far, see Remark \ref{rema418}). It can be seen later that the probability at this point does not affect the latter, but the former depends on it explicitly. This phenomenon also demonstrates indirectly that it is necessary to study the classification. The more general irrationality such as  $ \varphi $-type cases could also be considered, but we will not discuss here.

\begin{corollary}[The 1-type irrationality]\label{CORO11}
Assume that  $ k_n\in \mathcal{S}=\left\{ {K_1, \ldots ,K_\nu} \right\} $ ($ \nu \geqslant 2 $, and $ {K_i} \ne {K_j} $ for $ i \ne j $) satisfy the nonuniform distribution:
\begin{equation}\label{nonuniform}
	\mathop {\lim }\limits_{N \to  + \infty } \frac{{\left| {\left\{ {n = {K_j}:1 \leqslant n \leqslant N} \right\}} \right|}}{N} = {p_j} \in \left( {0,1} \right),\quad1 \leqslant j \leqslant \nu,
\end{equation}
where $ \sum\nolimits_{j = 1}^\nu  {{p_j}}  = 1 $. Then there hold:
\begin{itemize}
	\item[(C4)] If $ T\in C_{2,\varpi_1} ({\mathbb{T}^1})$ with $ {\varpi_1} \sim {x^\alpha }$ and $ 0<\alpha<1 $, then $ \phi \in C_{1,\widetilde{\varpi}_1} (\mathbb{T}^1) $ with $ {{\widetilde \varpi }_1} \sim {x^\beta } $, where
	\[\alpha\leqslant\beta : = \min \left\{ {1,\frac{{\alpha \log \lambda }}{{\log \vartheta }}} \right\},\quad\vartheta : = \prod\limits_{j = 1}^\nu  {K_j^{ - {p_j}}} .\]
	
	\item[(C5)] If $ T\in C_{2,\varpi_2} ({\mathbb{T}^1})$ with $ {\varpi _2} \sim {\left( { - \log x} \right)^{ - 1 - \sigma }}$ and $ \sigma>0 $, then $ \phi \in C_{1,\widetilde{\varpi}_2} (\mathbb{T}^1) $ with $ {{\widetilde \varpi }_2}\sim {\left( {\log {x^{ - 1}}} \right)^{ - \sigma }} $.
\end{itemize}

\end{corollary}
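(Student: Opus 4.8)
The plan is to deduce Corollary~\ref{CORO11} from Theorem~\ref{higherregu}. As $k_n=\mathcal{O}(1)$ we are in the constant type, so we may take $\varphi\equiv1$, and Fubini's theorem rewrites the quantity controlling the modulus of continuity of $\phi$ as
\[
\mathcal{R}(n)=\int_0^{\lambda^n}\!\Big(\int_x^1\frac{\varpi(y)}{y^2}\,dy\Big)dx=\int_0^{\lambda^n}\frac{\varpi(y)}{y}\,dy+\lambda^n\!\int_{\lambda^n}^1\frac{\varpi(y)}{y^2}\,dy .
\]
Thus the proof breaks into three parts: (i) the asymptotics of $\mathcal{R}(n)$ for $\varpi_1\sim x^\alpha$ and for $\varpi_2\sim(-\log x)^{-1-\sigma}$; (ii) the asymptotics of $\Delta(n)=\Delta_n$, hence of $\Delta^{-1}$, from the frequency hypothesis~\eqref{nonuniform}; and (iii) the check that the composite $\tilde\varpi=\mathcal{R}\circ\Delta^{-1}$ is (dominated by) a genuine modulus of continuity, so that Theorem~\ref{higherregu} applies.

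For (i): with $\varpi_1(y)=y^\alpha$ both summands above are of order $\lambda^{\alpha n}$, so $\mathcal{R}(n)\sim\lambda^{\alpha n}$ — in accordance with $\tau_n=\mathcal{O}(\lambda^{\alpha n})$ recorded in the remark after Theorem~\ref{denjoymodulus}. With $\varpi_2(y)=(-\log y)^{-1-\sigma}$, the substitution $y=e^{-t}$ gives $\int_0^{\lambda^n}\frac{\varpi_2(y)}{y}\,dy=\int_{-n\log\lambda}^\infty t^{-1-\sigma}\,dt=\sigma^{-1}(-n\log\lambda)^{-\sigma}$, which dominates the second summand, so $\mathcal{R}(n)\sim n^{-\sigma}$ (in accordance with $\tau_n=\mathcal{O}(n^{-1-\sigma})$).

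For (ii)--(iii): from $\Delta_n\sim q_{n+1}^{-1}$, the elementary two-sided bound $\prod_{j\le n+1}k_j\le q_{n+1}\le\prod_{j\le n+1}(k_j+1)$ and hypothesis~\eqref{nonuniform} (which forces $\tfrac1n\sum_{j\le n}\log k_j\to\sum_j p_j\log K_j=-\log\vartheta$), one identifies the exponential rate of $\Delta_n$ with $\vartheta$, hence $\Delta^{-1}(x)\sim\log_\vartheta x$ as $x\to0^+$. Composing with (i) and using that $\mathcal{R}$ is decreasing: for (C4), $\tilde\varpi_1(x)=\mathcal{R}(\Delta^{-1}(x))\sim\lambda^{\alpha\log_\vartheta x}=x^{\alpha\log_\vartheta\lambda}$, i.e.\ $\tilde\varpi_1\sim x^{\beta}$ with $\beta=\alpha\log_\vartheta\lambda=\dfrac{\alpha\log\lambda}{\log\vartheta}$; since a modulus of continuity cannot beat the Lipschitz class one truncates at $\beta=\min\{1,\alpha\log_\vartheta\lambda\}$, and $x^{\beta}$ is then a bona fide modulus, yielding the stated exponent. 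For (C5), $\tilde\varpi_2(x)=\mathcal{R}(\Delta^{-1}(x))\sim(\log_\vartheta x)^{-\sigma}=(\log\vartheta^{-1})^{\sigma}(\log x^{-1})^{-\sigma}\sim(\log x^{-1})^{-\sigma}$: the whole dependence on the probabilities $p_j$ collapses into the harmless constant $(\log\vartheta^{-1})^{\sigma}$, which is precisely why the distribution affects the H\"older exponent in (C4) but not the logarithmic order in (C5). Finally one verifies that $x^{\beta}$ ($0<\beta\le1$) and $(\log x^{-1})^{-\sigma}$ are increasing and subadditive near $0$, so Theorem~\ref{higherregu} applies and gives $\phi\in C_{1,\tilde\varpi_1}$, resp.\ $C_{1,\tilde\varpi_2}$.

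The main obstacle is step (ii): extracting the exponential rate of $\Delta_n$ purely from the Ces\`aro frequency assumption~\eqref{nonuniform}. One has $q_n/q_{n-1}=[k_n;k_{n-1},\dots,k_1]=k_n+\mathcal{O}(1)$, and the bounded correction depends on the entire reversed string of partial quotients, which~\eqref{nonuniform} does not govern; this is why the naive estimate only pins $\tfrac1n\log q_n$ between $\sum_j p_j\log K_j$ and $\sum_j p_j\log(K_j+1)$, and one must argue that, after composition with $\mathcal{R}$ (exponential in $n$ in (C4), polynomial in (C5)), only the leading exponential scale of $\Delta_n$ survives, so that the $o(n)$ ambiguity in its exponent is absorbed by the symbol $\sim$ in the conclusion; the option of replacing $\Delta$ by the explicit lower bound $\tilde\Delta$ of Remark~\ref{daoshuji} streamlines this bookkeeping at the cost of a possibly coarser constant. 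The secondary, lighter obstacle is confirming that $\tilde\varpi$ satisfies Definition~\ref{definition1}, which is exactly what forces the truncation $\beta=\min\{1,\cdot\}$ in (C4) and must be checked for the logarithmic profile in (C5).
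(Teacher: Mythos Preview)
Your proposal is correct and follows essentially the same route as the paper's own proof: apply Theorem~\ref{higherregu} with $\varphi\equiv1$, estimate $\mathcal{R}(n)$ and $\Delta^{-1}$ separately from the frequency hypothesis, then compose. The only minor difference is that the paper first establishes the general reduction $\mathcal{R}(n)=\mathcal{O}\bigl(\int_0^{\lambda^n}y^{-1}\varpi(y)\,dy\bigr)$ via L'H\^opital before specializing to $\varpi_1,\varpi_2$, while you compute each case directly; your explicit discussion of the $K_j$ versus $K_j{+}1$ ambiguity in the exponential rate of $\Delta_n$ (and the option of invoking Remark~\ref{daoshuji}) is in fact more careful than the paper's own treatment of that step.
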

\begin{remark}\label{rema418}
Distribution \eqref{nonuniform} can actually be extended further to the strongly nonuniform case similar to that in Remark \ref{remark999}, where the probability depends on the position of the points, but we do not pursue that.
Once  \eqref{nonuniform} is removed, the diffeomorphism $ \phi $ in (C4) can be of $ C_{1,\varpi^*}({\mathbb{T}^1}) $ with $ \gamma  = \min \left\{ {1,\frac{{ - \alpha \log \lambda }}{{\log M}}} \right\} $, where $ M = \mathop {\max }\nolimits_{1 \leqslant j \leqslant \nu } {K_j } $ (since positive integers are uniformly distributed). Similar to the following proof we have $ \mathop {\inf }\nolimits_{M \in {\mathbb{N}^ + }} \gamma  \geqslant \alpha  $. This implies that Diophantine's type of exponent $ 2 $ ($ k_n=\mathcal{O}(1) $) admits $ C^{1,\alpha} $ conjugacy, which has been shown to be optimal in \cite{ETDScon}, see also \cite{Khanin09Invent}, and it shows that our theorem is a little bit more accurate than the known optimal results. Additionally,  (C5) is independent of the probability.
\end{remark}
\begin{proof}
We first establish an explicit expression about $ \mathcal{R} $, based on a universal modulus of continuity $ \varpi $ which is strictly weaker than the Lipschitz type (because it is trivial and can be discussed separately, we therefore assume that  $ x/\varpi \left( x \right) $ is monotonically decreasing to $ 0 $ as $ x \to 0^+ $ without loss of generality), and $ k_{n}=\mathcal{O}(1) $ (without considering the probability).

Note that $ \mathop {\inf }\nolimits_{0 < z < {2^{ - 1}}} \int_z^1 {d\left( {{y^{ - 1}}\varpi \left( y \right)} \right)}  < 0 $ by the assumption we make previously, and
\[\int_z^1 {{y^{ - 2}}\varpi \left( y \right)dy}  = {z^{ - 1}}\varpi \left( z \right) - \int_z^1 {d\left( {{y^{ - 1}}\varpi \left( y \right)} \right)} .\]
Then it follows from L'Hospital's rule that
\[\mathop {\lim }\limits_{z \to {0^ + }} \frac{{\int_0^z {{y^{ - 1}}\varpi \left( y \right)dy} }}{{z\int_z^1 {{y^{ - 2}}\varpi \left( y \right)dy} }} = \mathop {\lim }\limits_{z \to {0^ + }} \frac{{{z^{ - 1}}\varpi \left( z \right)}}{{\int_z^1 {{y^{ - 2}}\varpi \left( y \right)dy}  - {z^{ - 1}}\varpi \left( z \right)}} =  + \infty .\]
This gives
\begin{align*}
	\mathcal{R} \left( n \right) &= \int_0^{{\lambda ^n}} {\left( {\varphi \left( {{{\log }_\lambda }x} \right)\int_x^1 {{y^{ - 2}}\varpi \left( y \right)dy} } \right)dx} \\
	& = \int_0^{{\lambda ^n}} {\left( {\int_x^1 {{y^{ - 2}}\varpi \left( y \right)dy} } \right)dx}  \\
	& = \int_0^{{\lambda ^n}} {\left( {\int_0^y {{y^{ - 2}}\varpi \left( y \right)dx} } \right)dy}  + \int_{{\lambda ^n}}^1 {\left( {\int_0^{{\lambda ^n}} {{y^{ - 2}}\varpi \left( y \right)dx} } \right)dy}  \\
	& = \int_0^{{\lambda ^n}} {{y^{ - 1}}\varpi \left( y \right)dy}  + {\lambda ^n}\int_{{\lambda ^n}}^1 {{y^{ - 2}}\varpi \left( y \right)dy}  \\
	& = \mathcal{O}\left( {\int_0^{{\lambda ^n}} {{y^{ - 1}}\varpi \left( y \right)dy} } \right).
\end{align*}

Next we provide an estimate of $ \Delta $ based on \eqref{nonuniform} and Remark \ref{daoshuji}. Let $ c>0 $ be a generic constant that does not affect the estimates. Since $ {\Delta _n} = {k_{n + 2}}{\Delta _{n + 1}} + {\Delta _{n + 2}} < \left( {{k_{n + 2}} + 1} \right){\Delta _{n + 1}} $, we therefore obtain that $ {\Delta _n} \geqslant c{\vartheta ^n} $ (obviously $ 0<\vartheta <1 $), which gives
\[{\Delta ^{ - 1}}\left( x \right): = \frac{{\log {c^{ - 1}}x}}{{\log \vartheta }} \sim \frac{{\log {x^{ - 1}}}}{{\log {\vartheta ^{ - 1}}}}.\]
Note that $ {\Delta _n} = \mathcal{O}\left( {{\lambda ^m}} \right) $ since $ \frac{{{\Delta _n}}}{{{\Delta _{n - m}}}} = \mathcal{O}\left( {{\lambda ^m}} \right) $.	This implies that $ \lambda \geqslant \vartheta $ (in fact $ \lambda >\frac{1}{2} \geqslant \vartheta $ by Statement \ref{item:b}), and therefore $ \beta  := \min \left\{ {1,\frac{{\alpha \log \lambda }}{{\log \vartheta }}} \right\} \geqslant \alpha  $.

Finally, denote $ \mathcal{R}_1 $ and $ \mathcal{R}_2 $ with respect to $ \varpi_1 $ and $ \varpi_2 $, respectively.  For (C4), direct calculation gives that
\[{\mathcal{R}_1}\left( n \right) = \mathcal{O}\left( {\int_0^{{\lambda ^n}} {{y^{ - 1 - \alpha }}dy} } \right){\text{ = }}\mathcal{O}\left( {{\lambda ^{\alpha n}}} \right),\]
therefore,
\[{{\widetilde \varpi }_1} \sim {\mathcal{R}_1} \circ {\Delta ^{ - 1}} = \mathcal{O}\left( {{\lambda ^{\alpha  \cdot \frac{{\log {x^{ - 1}}}}{{\log {\vartheta ^{ - 1}}}}}}} \right) = \mathcal{O}\left( {{x^\beta }} \right).\]
As to (C5), one notices that
\[{\mathcal{R}_2}\left( n \right) = \mathcal{O}\left( {\int_0^{{\lambda ^n}} {{y^{ - 1}}{{\left( { - \log y} \right)}^{ - 1 - \sigma }}dy} } \right) = \mathcal{O}\left( {{n^{ - \sigma }}} \right)\]
by asymptotic analysis. This leads to
\[{{\widetilde \varpi }_2} \sim {\mathcal{R}_2} \circ {\Delta ^{ - 1}} = \mathcal{O}\left( {{{\left( {\log {x^{ - 1}}} \right)}^{ - \sigma }}} \right).\]
One can easily verify that both $ {\widetilde \varpi }_1 $ and $ {\widetilde \varpi }_2 $ are modulus of continuity, then the conclusions follow directly from Theorem \ref{higherregu}.
\end{proof}

We conclude this section with potential applications of our results. Very recently, the first and third authors investigated the exponential convergence of Birkhoff averages under certain special weights for quasi-periodic and almost periodic systems (as well as more general cases), which breaks the inherently slow convergence rate typical of classical ergodic theory; see, for example, \cite{TL24a,TL25b,TL26a}. While those works focused on reduced simple systems, concrete problems necessitate a smooth conjugation theorem as a mediator---such as the classical KAM theorem. Theorems \ref{varpitype} and \ref{higherregu} of the present paper can also serve this purpose, which will be the subject of our future research.

\subsection{Optimality of our integrability condition}\label{Optimality about our integrability condition}
In this section, we will show certain optimality of our integrability condition \eqref{jifenyouxian}, namely
\[	\int_0^1 {\left( {\int_0^y {\varphi \left( {{{\log }_\lambda }x} \right)dx} } \right)\frac{{\varpi \left( y \right)}}{{{y^2}}}dy}  <  + \infty .\]
Let us start by reviewing some classic and important results.

\begin{itemize}
\item[(D1)]  Yoccoz's result \cite[Theorem 1.1, p. 126]{MR1924912}   concludes that, if the mapping $ T $ considered throughout this paper is only $ C^2 $, and $ \rho \in \mathbb{T}^1 $ is a number of the constant type (i.e., the $ 1 $-type in our terminology, or  equivalently, $ k_n =\mathcal{O}(1)$), then the conjugation $ \phi $ to the rotation with $ \rho $ is indeed  absolutely continuous (therefore, must be differentiable a.e.). This is somewhat different from the classical Denjoy theory in Section \ref{Subdenjoy} since the regularity for both $ T $ and $ \phi $ are higher. \textit{However, $ C^1 $ conjugacy has not yet been achieved at this point.} 

\item[(D2)] Meanwhile,  Katznelson and Ornstein \cite{MR1036903}  emphasized that, the condition of boundedness of the continued-fraction coefficients of $ \rho $ is essential to obtain absolutely continuous conjugacy (stronger than topological conjugacy but weaker than $ C^1 $ conjugacy). Otherwise, for a given $ \rho \in \mathbb{T}^1 $ with unbounded coefficients, that is, there exists a sequence $ \{k_{n_j}\}_j $ satisfying $ \mathop {\lim }\nolimits_{j \to  + \infty } {k_{{n_j}}} =  + \infty  $, one can construct a $ C^2 $ mapping $ T $ on $ \mathbb{T}^1 $ such that the conjugation $ \phi $ to the rotation with $ \rho $ is \textit{purely singular}, i.e., maps a map of zero measure onto a map of full measure, let alone admitting continuity. Additionally, the mapping $ T $ can be chosen arbitrarily close to the rotation with $ \rho $. See related work, Hawkins and Schmidt \cite{MR662606},  Katznelson \cite{MR0581808} and Lazutkin \cite{MR0482815}.  Even for the rotation number of the constant type (the Diophantine index is $ 0 $), in order to preserve $ C^1 $ conjugacy, the regularity requirement of mapping $ T $ cannot be lower than that of $ C^2 $. More precisely, the counterexample in \cite[Appendix 3]{ETDScon}   shows that the regularity of $ C^{2-\varepsilon } $ with any $ 0< \varepsilon<1 $ for $ T $ cannot admit $ C^1 $ conjugacy at this point.

\item[(D3)] Katznelson and  Ornstein \cite[Section 3]{ETDScon}  also discussed circle diffeomorphisms of $ C^2 $'s type, provided certain modulus of continuity. There the  Denjoy-type inequality is not optimal. Besides, they were more concerned with the case based on Diophantine irrationality, and the higher H\"older regularity of conjugations.  Khanin and Teplinsky \cite{Khanin09Invent} established the optimal Denjoy-type inequality under $ C^{2+\alpha} $ smoothness, where $ 0< \alpha <1 $. However, as we mentioned in the introduction and Section \ref{SecModulus}, H\"older regularity is not sufficient to characterize continuity in general.  
\end{itemize}

Here we touch the optimality, which  can be summarized as:

\begin{itemize}
\item [(OP1)] Note that $ \mathcal{R} $ in Theorem \ref{higherregu} (regularity higher than $ C^1 $) is strongly related to the integrability condition \eqref{jifenyouxian}, and we derive more accurate estimates than the known optimal  regularity results (at least for the constant type, i.e., the $ 1 $-type irrationality), see  Corollary \ref{CORO11} and Remark \ref{rema418}. This shows that our integrability condition has certain optimality in the sense of obtaining higher regularity of $ C^1 $ conjugacy.

\item [(OP2)] Assuming \textit{only $ C^2 $ (without any extra H\"older continuity)} and for the constant type irrational numbers, \textit{we could indeed improve the absolute  continuous conjugacy in (D1) to $ C^1 $ conjugacy,} as long as the modulus of continuity   naturally possessed by $ D^2 T $ (Remark \ref{666}) satisfies the Dini condition \eqref{DINI}, see conclusion (C1) in Corollary \ref{coro1} and Remark \ref{rema416}.

\item [(OP3)] \textit{The counterexamples in (D2) also  illustrate the optimality of our integrability condition, that is, to preserve the  differentiable conjugacy, $ C^2 $ regularity for the mapping $ T $ cannot be further weakened.} And in particular, for $ \rho $ with  unbounded $ k_n $, we can still obtain $ C^1 $ conjugacy due to Theorem \ref{varpitype}, as long as the  irrationality and regularity satisfy the equilibrium in the integrability condition \eqref{jifenyouxian} (and thus this is not inconsistent with the counterexample constructed by Hawkins and Schmidt \cite{MR662606} in (D2)), while the mapping $ T $ we consider is still only $ C^2 $ (without any extra H\"older continuity). Besides, the counterexample  constructed by  Katznelson and  Ornstein \cite{ETDScon} in (D2) also reflects our optimality (note that the  constant type rotation corresponds to $ \varphi(x)=1 $ in our  integrability condition \eqref{jifenyouxian}). The above arguments show that our integrability condition is somewhat strong, \textit{in the sense of preserving $ C^1 $ conjugacy} (note the huge gap between pure singularity and differentiability).

\item[(OP4)] Recall (D3). Fortunately, via the cross-ratio distortion estimates introduced by Khanin and Teplinsky in \cite{Khanin09Invent}, we extend Denjoy-type inequality to the weakest case (only $ C^2 $ regularity) and obtain the optimal result (see \cite{Khanin09Invent}). Then we further apply this powerful tool to investigate $ C^1 $ conjugacy, and propose the optimal  integrability condition \eqref{jifenyouxian}, which reveals the \textit{explicit} relation between irrationality and regularity in preserving $ C^1 $ conjugacy \textit{for the first time}, and it is also obviously easier to verify, see Corollary \ref{coro1} for explicit examples. Additionally, due to the optimality of Denjoy-type inequality, the weaker but \textit{implicit} condition \eqref{414} in Remark \ref{remark999} is indeed better than the results in \cite{ETDScon}, in the sense of only  $ C^2 $  without any extra H\"older continuity for the mapping $ T $. It should be emphasized that our analysis process is much simpler than traditional approaches.
\end{itemize}

\section{Appendix}
\begin{lemma}\label{Knlemma}
Let $ \rho  = \left[ {{k_1}\left( \rho  \right),{k_2}\left( \rho  \right), \ldots } \right] \in {\mathbb{T}^1} \backslash \left\{ 0 \right\} $ be arbitrarily given. Assume that $ K_n \geqslant 0 $. Then
\begin{equation}\label{Lemma1.1}
	\sum\limits_{n = 1}^\infty  {{k_n}\left( \rho  \right){K_n}}  <  + \infty ,\quad\rho  - \text{a.e. in $ \mathbb{T}^1 $}
\end{equation}
holds if
\begin{equation}\label{Lemma1.2}
	\sum\limits_{n = 1}^\infty  {{K_n}\log n}  <  + \infty .
\end{equation}
Further, if $ \left\{ {{K_n}} \right\} $ is monotonically increasing, then \eqref{Lemma1.1} is in fact equivalent to \eqref{Lemma1.2}.
\end{lemma}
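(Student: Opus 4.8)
The plan is to reduce the problem to a classical metric statement about the growth of partial quotients. The key input is the Borel--Bernstein theorem from the metric theory of continued fractions: for a nonnegative sequence $\{\psi(n)\}$, the set of $\rho$ for which $k_n(\rho) \geqslant \psi(n)$ holds for infinitely many $n$ has full Lebesgue measure if $\sum_n 1/\psi(n) = +\infty$ and zero measure if $\sum_n 1/\psi(n) < +\infty$. Equivalently, the Gauss--Kuzmin statistics give $\int_{\mathbb{T}^1} k_n(\rho)\,d\rho = +\infty$, but the distribution of $k_n$ has the tail bound $\mathrm{Leb}\{\rho : k_n(\rho) > t\} = \mathcal{O}(1/t)$ uniformly in $n$. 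The truncated expectation therefore satisfies $\int_{\mathbb{T}^1} \min\{k_n(\rho), N\}\,d\rho = \mathcal{O}(\log N)$.

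First I would prove the sufficiency of \eqref{Lemma1.2}. Fix a threshold sequence, say $N_n = n^2$ (any polynomially growing choice works), and split $k_n = \min\{k_n, N_n\} + (k_n - N_n)^+$. For the truncated part, Tonelli's theorem gives
\[
\int_{\mathbb{T}^1}\sum_{n=1}^\infty K_n \min\{k_n(\rho),N_n\}\,d\rho = \sum_{n=1}^\infty K_n\,\mathcal{O}(\log N_n) = \mathcal{O}\!\left(\sum_{n=1}^\infty K_n \log n\right) < +\infty,
\]
so that sum is finite for $\rho$-a.e.\ $\rho$. For the tail part, the Borel--Cantelli lemma applied with $\mathrm{Leb}\{k_n > N_n\} = \mathcal{O}(1/N_n) = \mathcal{O}(1/n^2)$ shows that for a.e.\ $\rho$ one has $k_n(\rho) \leqslant N_n$ for all large $n$, hence $(k_n - N_n)^+ = 0$ eventually and the tail contributes only finitely many nonzero terms. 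Combining the two pieces yields \eqref{Lemma1.1} almost everywhere.

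For the converse under the monotonicity hypothesis, I would argue contrapositively: suppose $\sum_n K_n \log n = +\infty$ with $\{K_n\}$ nondecreasing. Using again the uniform lower tail estimate $\mathrm{Leb}\{\rho : k_n(\rho) \geqslant t\} \geqslant c/t$ for $t$ in a suitable range, together with a second-moment or quasi-independence argument for the events $\{k_n(\rho) \text{ large}\}$ across distinct $n$ (the continued fraction digits are not independent, but they are $\psi$-mixing, which suffices for a divergence Borel--Cantelli argument), one shows that for a.e.\ $\rho$ the partial sums $\sum_{n \leqslant N} K_n k_n(\rho)$ are comparable to $\sum_{n \leqslant N} K_n \log n$ along a subsequence, hence diverge. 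The monotonicity of $\{K_n\}$ is what lets one transfer the divergence of $\sum K_n \log n$ into a genuine lower bound rather than merely an obstruction to the easy direction.

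The main obstacle is the converse direction: since the continued-fraction digits $k_n(\rho)$ are dependent, one cannot invoke the independent Borel--Cantelli lemma directly and must instead exploit the exponential mixing of the Gauss map (or cite the Borel--Bernstein theorem and its refinements, e.g.\ as in the metric theory developed by Khinchin and Philipp) to get the divergence half. The sufficiency direction is essentially routine once the uniform tail bound $\mathrm{Leb}\{k_n > t\} = \mathcal{O}(1/t)$ is in hand; I would most likely just cite this standard fact (it is exactly the content of Corollary A2.2 in \cite{ETDScon}, referenced in the excerpt) rather than reprove it.
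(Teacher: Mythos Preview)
The paper does not give its own proof of this lemma: it simply refers the reader to Lemma~A.2.1 and its Remark in Katznelson--Ornstein~\cite{ETDScon}. So there is no argument in the paper to compare against; your sketch is effectively a reconstruction of (or a parallel to) the cited appendix.

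On the substance: your sufficiency argument is correct and is the standard one. The uniform tail bound $\mathrm{Leb}\{k_n(\rho)>t\}=\mathcal{O}(1/t)$ yields $\int_{\mathbb{T}^1}\min\{k_n,N\}\,d\rho=\mathcal{O}(\log N)$, and your truncation at $N_n=n^2$ cleanly separates an $L^1$ piece (finite by Tonelli and \eqref{Lemma1.2}) from an overshoot that vanishes for all large $n$ almost surely by Borel--Cantelli. This is precisely the kind of argument underlying the cited result. One small correction: the reference you invoke for the tail bound, Corollary~A2.2 in~\cite{ETDScon}, is a different statement (it is cited in the paper for the a.e.\ polynomial growth $k_n=\mathcal{O}(n^\nu)$); the lemma itself is A.2.1.

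For the converse, your plan (contrapositive plus a divergence Borel--Cantelli using $\psi$-mixing of the Gauss map) is the right shape but remains a sketch; you have correctly flagged the dependence of the digits as the nontrivial point. Note also that the monotonicity hypothesis as stated in the paper (``monotonically increasing'') is almost certainly a slip for \emph{decreasing}: if $K_n\geqslant 0$ is nondecreasing and not eventually zero then both \eqref{Lemma1.1} and \eqref{Lemma1.2} diverge trivially, so the equivalence is vacuous. Your write-up inherits this slip; in any actual argument you would want $\{K_n\}$ nonincreasing so that a Cauchy-condensation-type step can transfer divergence of $\sum K_n\log n$ to divergence of $\sum k_n K_n$.
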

\begin{proof}
See details from Lemma A.2.1 and its Remark  in \cite{ETDScon}.
\end{proof}

\section*{Acknowledgements} 
The authors would like to sincerely thank the anonymous referees for their valuable suggestions and comments, which significantly improved the paper. Z. Tong  was supported by the China Postdoctoral Science Foundation (Grant No. 2025M783102). S. Xiao was supported by the Fundamental Research Funds for the Central Universities (Grant No. 044420250103). Y. Li was supported in part by the National Natural Science Foundation of China (Grant Nos. 12071175, 12471183 and 12531009).

\end{document}